\def\ep{{\varepsilon}}
\def\spt{{\rm spt}}
\def\R{\mathbb R}
\newtheorem{theo}{\textbf{Theorem}}[section]
\newtheorem{lem}{\textbf{Lemma}}[section]
\newtheorem{prop}[theo]{\textbf{Proposition}}
\newtheorem{defi}{\textbf{Definition}}[section]
\newtheorem{rem}[theo]{\textbf{Remark}}
\numberwithin{equation}{section}
\title{Convergence to a terrace solution in multistable reaction-diffusion equation with discontinuities}
\date{\today}
\author{Thomas Giletti\footnote{Institut Elie Cartan de Lorraine, UMR 7502, University of Lorraine, 54506 Vandoeuvre-les-Nancy, France
thomas.giletti@univ-lorraine.fr}, Ho-Youn Kim\footnote{Department of Mathematical Sciences, KAIST, 291 Daehak-ro, Yuseong-gu, Daejeon, 34141, Korea}}
\begin{document}

\maketitle

\begin{abstract}
In this paper we address the large-time behavior of solutions of bistable and multistable reaction-diffusion equation with discontinuities around the stable steady states. We show that the solution always converges to a special solution, which may either be a traveling wave in the bistable case, or more generally a terrace (i.e. a collection of stacked traveling waves with ordered speeds) in the multistable case.
\end{abstract}

\section{Introduction}

In this paper, we investigate the large-time dynamics of solutions of the reaction-diffusion equation,
\begin{equation}\label{eq:rd}
\partial_t u = \partial_{xx} u + f(u),\quad  t >0, \ \  x\in\R,
\end{equation}
supplemented with the initial condition
$$u(t=0,\cdot) \equiv u_0 .$$
when the reaction function $f$ has several stable steady states and discontinuities. This is motivated by population dynamics and ecology, where the unknown function $u$ typically stands for a species density and discontinuities may arise from harvesting terms. In particular, such discontinuities may allow for finite time extinction, which is a realistic phenomena that cannot be observed in the usual case of smooth reaction. These discontinuous models have been considered in~\cite{Kim2020,KimChung} for the bistable case, and more recently by the same authors of the present paper for the more general multistable framework~\cite{GilettiKimKim_existence}. 

By multistable, we will mean that there exists a finite number of constant steady states, denoted $\theta_i$'s. More precisely,
\begin{eqnarray}
 && 0 = \theta_0 < \theta_1 < \cdots < \theta_{2I-2} < \theta_{2I-1} < \theta_{2I} = 1, \nonumber    \\
 &&  f(u)>0\ \text{ for }\ u<\theta_{0}\ \ \text{ and }\ \theta_{2i-1}<u<\theta_{2i},\quad i=1,\cdots,I,  \label{H1}\\
&& f(u)<0\ \text{ for }\ u>\theta_{2I}\ \text{ and }\ \theta_{2i}<u<\theta_{2i+1},\quad i=0,\cdots,I-1 ,\nonumber
\end{eqnarray}
with $I \in \mathbb{N}^*$. As far as the reaction term $f$ is concerned, we make the assumption that
\begin{equation}\label{H2}
f\in C^1(\R\setminus\{\theta_{2i}\,| \ 0\leq i \leq I \})\cap {\rm Lip}\, (\R\setminus\{\theta_{2i}\,| \ 0\leq i \leq I \}) \cap L^\infty (\mathbb{R}).
\end{equation}
while at the stable steady states $\theta_{2i}$'s it is discontinuous, i.e.
\begin{equation}\label{H3}
\lim_{u \to \theta_{2i}^-}f(u)>0>\lim_{u \to \theta_{2i}^+}f(u),\quad i=0,\cdots,I.
\end{equation}

Most of the mathematical literature for reaction-diffusion equations, in particular in the context of population dynamics, has been devoted to situations where instead $f$ is at least Lipschitz-continuous. A typical case is when $f$ is of the bistable type, i.e. it satisfies \eqref{H1} with $I=1$, which allows to take into account a so-called strong Allee effect, which is a positive correlation between the reproduction rate and the species density, at small densities. It is well-known that in such a situation equation \eqref{eq:rd} admits (unique up to shifts) traveling wave solutions connecting $0$ and $1$, which are special entire in time solutions $u(t,x)$ that can be written in the form
$$u(t,x) = \phi (x-ct),$$
where $c  \in \mathbb{R}$ is called the wave speed, and $\phi$ the wave profile satisfies
\[
\phi ' \leq 0 , \ \text{ with } \phi(z)\to1\ \text{ as }\ z\to-\infty,\ \text{ and }\ \phi(z)\to0\ \text{ as }\ z\to\infty.
\]
Furthermore, it is attractive in the sense that, for large classes of initial data including any initial datum with $0 \leq u_ 0 \leq 1$, $u_0 (-\infty) =1$ and $u_0 (+\infty) = 0$, the solution converges to a shift of this traveling wave as $t \to +\infty$, in the sense that
$$\exists X >0 , \qquad \lim_{t \to +\infty} \sup_{x \in \R} | u(t,x) - \phi (x-ct- X) | = 0.$$
We refer to the famous works~\cite{AW75,FifeMcLeod77} for more details, though the mathematical literature on the subject is much richer. Indeed, in the past decades, these seminal results have been a motivation for the construction of traveling wave solutions in a large range of reaction-diffusion models, including heterogeneous environments, competition or prey-predator systems, etc.

In the more intricate multistable case where $I >1$, in general there does not exist such a traveling wave, at least not connecting directly $0$ and $1$. This led to the notion of minimal decomposition~\cite{FifeMcLeod77}, which is a collection of traveling waves connecting steady states sequentially from $0$ to $1$. It typically provides a theoretical groundwork for the depiction of propagation phenomena  that takes place into several intermediate steps. More recently, this notion has been relabeled as ``propagating terrace'' in~\cite{DGM,GM,GilettiRossi,Polacik}, where it has been extended in particular to heterogeneous environments or reaction terms that allow an infinite number of steady states. We also refer to Definition~\ref{terracedef} below. In these aforementioned works, not only the existence of propagating terraces has been addressed, but also their attractiveness with respect to solutions of the Cauchy problem associated with \eqref{eq:rd}. Without going into the details, it was found that, as in the more standard bistable case, the propagating terrace truly dictates the large-time behavior of solutions. However, it does it piecewise due to the fact that it does not consist of a single but a family of special solutions of~\eqref{eq:rd}. \medskip

Now the novelty of the current manuscript is that, on the contrary to the above cited works, here we make the assumption that $f$ has discontinuities at the stable steady states. Actually, we briefly point out that we can allow $f$ to also have a finite number of discontinuitives away from steady states, and our arguments still apply with straightforward modification. Still it must be avoided that $f$ has discontinuities at the unstable steady states $\theta_{2i-1}$'s, as such a situation raises serious forward-in-time well-posedness issues (even in the pure ODE case with no diffusion).

In any case, under assumptions \eqref{H1}-\eqref{H2}-\eqref{H3}, we have established the existence and uniqueness (up to some shifts) of a propagating terrace in~\cite{GilettiKimKim_existence} using a phase plane analysis. We will recall these results more precisely below. Furthermore, due to the discontinuities of~$f$, we also found that all the traveling waves forming the propagating terrace are identical on some half-lines to their limiting (stable) stable steady states; we call such traveling waves ``compact''. In particular it is possible to ``glue'' them into a single special solution, which we refer to as a ``terrace solution''; see Definitions~\ref{terracedef} and~\ref{terracesolution}. This is the specificity of the discontinuous framework, which allows for a deeper analysis of the properties of the terrace. In this work, we continue our analysis by checking that solutions of the Cauchy problem~\eqref{eq:rd} do converge to a terrace solution as time goes to infinity.

\section{Definitions and main results}

In this section we introduce the key definitions and recall some important results, first from~\cite{Kim2020} on the well-posedness of the parabolic problem~\eqref{eq:rd}, second from~\cite{GilettiKimKim_existence} on the existence of terraces.

Throughout this work, we will assume that the initial condition $u_0$ satisfies
\begin{equation}\label{eq:ini}
u(t=0, x) \equiv u_0(x) \in C^{1,\alpha}_{loc}  (\mathbb{R}) , \quad \mbox{ where } \quad 0 \leq u_ 0 \leq 1
\end{equation}
for some $0\le \alpha < 1$. Following~\cite{Kim2020}, we introduce the notion of a solution for the equation~\eqref{eq:rd} with discontinuities.
\begin{defi}\label{def:sol1}
Assume that the initial condition $u_0 \in H^1_{loc}(\R)$. Define the functions:
$$\underline{f}  (s) := \min \{ f(s^-) , f (s^+)\}, \qquad \overline{f}  (s) := \max \{ f (s^-) , f(s^+) \}.$$
Let a function $u$ satisfy that, for any compact domain $D \subset \mathbb{R}$ and $T >0$,
\begin{equation*} 
u \in C ( [ 0,T); L^2 (D)) \cap H^1 ((0,T)\times D),
\end{equation*}
as well as $t \mapsto \int_\R e^{-|x|} u^2 (t,x) dx$ is continuous on $[0,T)$ and
\begin{equation*} 
\sup_{0< t < T} \int_\R e^{-|x|} ( |u(t,x)|^2 + |\partial_x u(t,x)|^2  ) dx < \infty.
\end{equation*}

Then $u$ is called a weak solution of~\eqref{eq:rd} if, for any nonnegative test function $\psi \in C_c^\infty (\mathbb{R}_+ \times \mathbb{R})$, one has
\begin{equation} \label{def:super1}
\int_\R u  (0,x) \psi (0,x) dx \geq \int \int_{\mathbb{R}_+ \times \mathbb{R}} (-u \partial_t \psi + \partial_x u \partial_x \psi -  \overline{f} (u) \psi ) dx dt,
\end{equation}
and
\begin{equation} \label{def:sub1}
\int_\R u (0,x) \psi (0,x) dx \leq \int \int_{\mathbb{R}_+ \times \mathbb{R}} (-u \partial_t \psi + \partial_x u \partial_x \psi - \underline{f} (u) \psi ) dx dt.
\end{equation}

If only the first (resp. second) inequality is satisfied, then we call $u$ a weak super-solution (resp. a weak sub-solution).
\end{defi}

\begin{rem}
	Since $u \in H^1((0,T)\times D)$, we have the time derivative $\partial_t u$ in $L^2((0,T)\times D)$ space. Then, the above two equations for the super- and sub-solutions in the definition, \eqref{def:super1}--\eqref{def:sub1}, can be respectively rewritten as follows:
	\begin{eqnarray*}
		\int \int_{\mathbb{R}_+ \times \mathbb{R}} (\partial_t u \, \psi + \partial_x u \partial_x \psi -  \overline{f} (u) \psi ) dx dt &\leq 0, \\ 
		\int \int_{\mathbb{R}_+ \times \mathbb{R}} (\partial_t u \, \psi + \partial_x u \partial_x \psi - \underline{f} (u) \psi ) dx dt &\geq 0.
	\end{eqnarray*}
\end{rem}
\begin{rem}
	In the Definition 5.1 of~\cite{Kim2020}, it is only assumed the $L^2((0,T);H^1(D))$ regularity for $u(t,x)$. But to obtain the uniqueness, we need an extra condition which is that $\partial_t u \in L^2((0,T)\times D)$; see indeed Theorem 5.2 of~\cite{Kim2020}. Hence we choose to assume in this paper that $u \in  H^1 ((0,T)\times D)$. 
\end{rem}
With this definition in hand, we recall the following well-posedness result:
\begin{theo}[\cite{Kim2020}]\label{thm:Kim2020}
The Cauchy problem~\eqref{eq:rd} with initial value $u_0$ which satisfies \eqref{eq:ini} admits a unique weak solution~$u$ in the sense of Definition~\ref{def:sol1}. Moreover, it satisfies that $u \in C^{ (1+\alpha)/2 , 1+\alpha}_{loc} (\mathbb{R}_+ \times \mathbb{R} ) \cap L^\infty (\mathbb{R}_+ \times \R)$.
\end{theo}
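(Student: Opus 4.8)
The plan is to read Definition~\ref{def:sol1} as a semilinear parabolic problem driven by a \emph{maximal monotone} nonlinearity, after which existence, uniqueness and regularity are all accessible by classical tools. Since by \eqref{H2} the function $f$ is Lipschitz on each interval on which it is continuous, with a common constant $L$, the function $g(s):=Ls-f(s)$ is nondecreasing on all of $\R$: on each continuity interval $g'=L-f'\ge 0$, while at each $\theta_{2i}$ it has the upward jump $f(\theta_{2i}^-)-f(\theta_{2i}^+)>0$ by \eqref{H3}. Writing $g(s^\pm)$ for the one-sided limits, one checks that $\overline f(s)=Ls-g(s^-)$ and $\underline f(s)=Ls-g(s^+)$, so that, by the reformulation in the remark after Definition~\ref{def:sol1}, a function $u$ carrying the weighted regularity of that definition is a weak solution of \eqref{eq:rd} if and only if
\[
\partial_t u-\partial_{xx}u=Lu-\xi,\qquad\text{with}\qquad \xi(t,x)\in\big[\,g(u(t,x)^-),\,g(u(t,x)^+)\,\big]\quad\text{for a.e.\ }(t,x),
\]
the bracket being the completed (maximal monotone) graph of $g$; a weak super- (resp.\ sub-) solution amounts to keeping only the one-sided bound $\xi\ge g(u^-)$ (resp.\ $\xi\le g(u^+)$).

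First, the constants $0$ and $1$ are themselves weak solutions, since by \eqref{H1}--\eqref{H3} one has $0\in(f(0^+),f(0^-))=(\underline f(0),\overline f(0))$ and likewise $0\in(\underline f(1),\overline f(1))$. Moreover a comparison principle between weak solutions holds: if $u_1,u_2$ are weak solutions with $u_1(0,\cdot)\le u_2(0,\cdot)$ and $\xi_1,\xi_2$ are the associated sections, then $w:=u_1-u_2$ satisfies $\partial_t w-\partial_{xx}w=Lw-(\xi_1-\xi_2)$, and on $\{w>0\}$ monotonicity of $g$ forces $\xi_1-\xi_2\ge g(u_1^-)-g(u_2^+)\ge0$; testing against $w_+\,e^{-|x|}$ (absorbing the first-order term generated by $\partial_x e^{-|x|}$ into the Dirichlet term) and invoking Gronwall's lemma gives $w_+\equiv 0$, i.e.\ $u_1\le u_2$. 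In particular any weak solution with $0\le u_0\le 1$ satisfies $0\le u\le 1$.

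For existence, regularize $g$ by globally Lipschitz nondecreasing functions $g_\ep$ converging to $g$ locally uniformly away from its jumps, arranged so that $0$ and $1$ remain a sub- and a super-solution for the regularized reaction $f_\ep:=L\,\mathrm{id}-g_\ep$. The classical semilinear problem $\partial_t u_\ep-\partial_{xx}u_\ep=f_\ep(u_\ep)$, $u_\ep(0,\cdot)=u_0$, has a unique global solution with $0\le u_\ep\le 1$ and $u_\ep\in C^{(1+\alpha)/2,1+\alpha}_{loc}(\R_+\times\R)\cap L^\infty$; hence $f_\ep(u_\ep)$ is bounded in $L^\infty(\R_+\times\R)$ uniformly in $\ep$, and interior parabolic $L^p$ and Schauder estimates (together with $u_0\in C^{1,\alpha}_{loc}$ to reach $t=0$) plus the weighted energy estimate (test against $e^{-|x|}u_\ep$) provide uniform $C^{(1+\alpha)/2,1+\alpha}_{loc}$ and weighted $L^2$/$H^1$ bounds. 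Along a subsequence, $u_\ep\to u$ locally in $C^1$ in $x$ and weakly in $H^1_{loc}$ in $(t,x)$, while $g_\ep(u_\ep)\rightharpoonup\xi$ weakly-$\ast$ in $L^\infty$; the linear terms pass to the limit at once, and the Minty monotonicity argument (graph convergence of $g_\ep$ to the maximal monotone graph of $g$, together with strong convergence of $u_\ep$) gives $\xi(t,x)\in[g(u(t,x)^-),g(u(t,x)^+)]$ a.e. Thus $u$ is a weak solution, and since its right-hand side $Lu-\xi$ lies in $L^\infty_{loc}$ with $u_0\in C^{1,\alpha}_{loc}$, parabolic regularity upgrades $u$ to $C^{(1+\alpha)/2,1+\alpha}_{loc}(\R_+\times\R)\cap L^\infty$.

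The hardest part is uniqueness. Let $u_1,u_2$ be two weak solutions with sections $\xi_1,\xi_2$ and $w:=u_1-u_2$, so that $w(0,\cdot)\equiv 0$ and $\partial_t w-\partial_{xx}w=Lw-(\xi_1-\xi_2)$ weakly. Testing against $e^{-|x|}w$ — which is legitimate after approximating $e^{-|x|}$ by compactly supported cutoffs and using $u_i\in L^\infty$ together with the weighted $L^2$/$H^1$ control of Definition~\ref{def:sol1}, the remainder terms vanishing thanks to the $e^{-|x|}$ weight — yields
\[
\tfrac{1}{2}\,\tfrac{d}{dt}\!\int_\R e^{-|x|}w^2\,dx+\int_\R e^{-|x|}(\partial_x w)^2\,dx+\int_\R(\partial_x e^{-|x|})\,w\,\partial_x w\,dx=L\!\int_\R e^{-|x|}w^2\,dx-\int_\R e^{-|x|}(\xi_1-\xi_2)\,w\,dx.
\]
Monotonicity of the completed graph of $g$ gives $(\xi_1-\xi_2)(u_1-u_2)\ge 0$ a.e., so the last term is $\le 0$; since $|\partial_x e^{-|x|}|=e^{-|x|}$, Young's inequality absorbs the third term into $\int_\R e^{-|x|}(\partial_x w)^2\,dx+\tfrac{1}{4}\int_\R e^{-|x|}w^2\,dx$. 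Hence $\tfrac{d}{dt}\!\int_\R e^{-|x|}w^2\,dx\le(2L+1)\!\int_\R e^{-|x|}w^2\,dx$, and $w(0,\cdot)\equiv 0$ with Gronwall's lemma forces $w\equiv 0$. The two delicate points are exactly (i) making this weighted energy identity rigorous for functions with only the local, weighted regularity prescribed in Definition~\ref{def:sol1} on the unbounded line — which is the very purpose of the weighted conditions there — and (ii) the sign of the reaction contribution, which rests decisively on \eqref{H3}: the discontinuities being at the \emph{stable} steady states is what makes $g=L\,\mathrm{id}-f$ globally nondecreasing; were a jump located at an unstable $\theta_{2i-1}$ this monotonicity, and with it the uniqueness, would fail.
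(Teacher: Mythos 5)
This theorem is recalled from \cite{Kim2020}; the paper gives no proof of its own, only noting that existence is obtained by approximating the discontinuous $f$ with smooth reactions and that uniqueness follows from a weak comparison principle. Your argument follows exactly that strategy — regularization with uniform parabolic and weighted $L^2$ bounds for existence, and a weighted energy/comparison estimate for uniqueness — merely repackaged through the maximal monotone graph of $s\mapsto Ls-f(s)$ (whose monotonicity is precisely assumption~\eqref{H3}), and it is correct.
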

The proof of the existence part was based on a regularization technique, and approximating the discontinuous reaction~$f$ by a sequence of smooth functions. By the same regularization technique, we can prove further estimates and compactness properties that will be useful in studying the large time behavior of solutions; see Section~\ref{sec:prelim}. We further point out that the uniqueness of the solution follows from a weak comparison principle, also proved in~\cite{Kim2020}, and which we will use extensively here.\medskip

Next, we define some special solutions, namely traveling waves and terraces.
\begin{defi}\label{def:twsol}
	A $C^1$ function $\phi:\R\to\R$ is called a {\bf traveling wave solution} of \eqref{eq:rd} if there exists $c\in\R$ such that
\begin{equation*}
\phi''+c\phi'+f(\phi)=0
\end{equation*}
in the classical sense in the domain $\{z\in\R:\phi(z)\ne\theta_{2i},\  0 \leq i \leq I\}$. We refer to the constant~$c$ as the {\bf traveling wave speed}.

Furthermore, we say that the traveling wave monotonically connects the two steady states~$\theta_i$ and~$\theta_j$ with integers $0 \leq i < j \leq 2I$ if it is nonincreasing and
\begin{equation*}
\lim_{z \to-\infty}\phi(z)=\theta_j , \quad  \lim_{z\to+\infty}\phi(z)=\theta_i.
\end{equation*}
\end{defi}
One may check that, if $\phi$ is a traveling wave, then $u(t,x)=\phi(x-ct)$ is a solution of~\eqref{eq:rd} in the sense of the above Definition~\ref{def:sol1}. Therefore, by some slight abuse of language, we may refer to both $u(t,x)$ and $\phi(z)$ as traveling wave solutions whenever $u(t,x)=\phi(x-ct)$.

As we pointed out earlier, in the bistable case then there exists a traveling wave which monotonically connects the extremal stable steady states $0$ and $1$. However, if there are other stable steady states, then such a traveling wave may not exist, and the notion of a propagating terrace is considered instead as in~\cite{DGM}.
\begin{defi} \label{terracedef} A collection of traveling wave solutions $\{\phi_j:j=1,\cdots,J\}$ is called a {\bf propagating terrace} connecting $0$ and $1$ if each $\phi_j$ monotonically connects two steady states $\theta_{i_{j-1}}$ and $\theta_{i_{j}}$, and these limits and the wave speeds $c_j$ corresponding to $\phi_j$ satisfy
\begin{equation*}
0=\theta_{i_0}<\theta_{i_1}<\cdots<\theta_{i_J}=1\quad\text{and} \quad c_1\ge\cdots\ge c_J.
\end{equation*}
The steady states $\theta_{i_j}$'s are called the {\bf platforms} of the terrace.
\end{defi}
Before we recall our earlier result on the existence of a propagating terrace, for convenience we also introduce the following:
\begin{defi}\label{connectedcompact} The support of a function $\psi : \R \to \R$ is the set of points where $\psi$ is nonzero, that is
$$\text{spt} \, (\psi )= \{ z \in \mathbb{R} \, | \ \psi (z) \neq 0 \}. $$
Then a traveling wave solution $\phi$ is called: $(i)$ {\bf connected} if the support of $\phi'$ is connected; $(ii)$ {\bf compact} if the closure of the support of $\phi'$ is compact.
\end{defi}
The concept of a compact traveling wave is mostly meaningful in the discontinuous framework of \eqref{H1}-\eqref{H2}-\eqref{H3}. Indeed, if a traveling wave monotonically connects two stable steady states $\theta_i$ and $\theta_j$, where $f$ is discontinuous, then it is straightforward to check that this traveling wave must be compact. Indeed, if it is not then $\phi$ satisfies on the whole real line that $\theta_{i } < \phi < \theta_j$. Since $f$ is Lipschitz-continuous in this (open) interval, one can use standard elliptic estimates and find that $\phi '' (z) + c \phi ' (z) \to 0$ as $z \to \pm \infty$, which contradicts the fact that $f (\theta_i^+) < 0 < f (\theta_j^-)$. It turns out that, under the same assumptions, all the traveling waves involved in the propagating terrace indeed connect stable steady states.

We are now in a position to recall the main result on propagating terraces from our previous work:
\begin{theo}[\cite{GilettiKimKim_existence}]\label{th:exist} Let $f$ satisfy \eqref{H1}--\eqref{H2}--\eqref{H3}. There exists a propagating terrace connecting $0$ and $1$. It is unique in the sense that the set of platforms $\{ \theta_{i_j}\}$ is unique, and for each $j$ the traveling wave solution $\phi_j$ is unique up to translation.

Furthermore, the propagating terrace satisfies the following properties:
\begin{enumerate}[$(i)$]
\item the $\theta_{i_j}$'s are stable steady states, i.e. $i_j$ is an even integer for all $j=0,\cdots,J$;
\item the traveling waves $\phi_j$ are nonincreasing, connected and compact for all $j=1,\cdots,J$ in the sense of Definition~\ref{connectedcompact}.
\end{enumerate}
In particular, in the bistable case when $I=1$, then $J=1$ and the propagating terrace consists of a single traveling wave connecting $0$ and $1$.
\end{theo}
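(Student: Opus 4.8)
We sketch how one would establish this theorem; as indicated, the argument is a phase-plane analysis, carried out in two stages --- first the construction of the individual compact waves, then their assembly into an ordered family --- followed by a sliding argument for uniqueness. The guiding structural fact is the one already noted above: a nonincreasing wave joining two stable steady states is automatically compact, since near such a state $f$ is bounded away from $0$, so the profile cannot approach it asymptotically and must reach it in finite ``time'' and stay constant beyond.

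\emph{Waves between two stable steady states.} Fix stable steady states $\theta_{2i} < \theta_{2j}$ and look for a nonincreasing wave joining them at some speed $c$. Since the profile is monotone one may use $\phi$ as independent variable and reduce $\phi'' + c\phi' + f(\phi)=0$ to the scalar identity $\tfrac12 \tfrac{d}{d\phi}(\psi^2) = -c\psi - f(\phi)$ for $\psi = \phi' < 0$. The jump $f(\theta_{2j}^-) > 0$ forces $\psi(\phi)^2 \sim 2 f(\theta_{2j}^-)(\theta_{2j}-\phi)$ as $\phi\to\theta_{2j}^-$, which singles out a unique admissible trajectory $\psi_c$ leaving the point $(\theta_{2j},0)$ --- a square-root branch playing the role of the stable manifold, which no longer exists as such since $f$ is discontinuous there. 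The same identity shows that $\psi_c$ stays closer to the axis as $c$ increases, and that $\psi_c$ can return to $\{\psi=0\}$ only at points where $f < 0$. A shooting argument in $c$ --- for $c$ large the trajectory returns to the axis at an interior point and no connecting wave exists, for $c$ very negative it plunges past the lower stable states without landing --- combined with an intermediate-value argument then produces a critical speed at which $\psi_c$ lands exactly on $(\theta,0)$ for some stable steady state $\theta\in[\theta_{2i},\theta_{2j})$; the resulting profile is nonincreasing, compact (it reaches both limiting states in finite ``time''), and connected (the trajectory stays strictly below the axis in between).

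\emph{Assembling the terrace.} Following the recursive scheme of~\cite{DGM}, put $\theta_{i_0}=0$; given a platform $\theta_{i_{k-1}}<1$, let $c_k$ be the supremum of the admissible speeds of nonincreasing waves from $\theta_{i_{k-1}}$ to a strictly larger stable steady state, and let $\theta_{i_k}$ be the smallest stable steady state reachable from $\theta_{i_{k-1}}$ at speed exactly $c_k$; both are well defined once one knows that the set of admissible speeds is closed. This yields a finite increasing sequence $0=\theta_{i_0}<\cdots<\theta_{i_J}=1$ of stable steady states, hence property~(i), while property~(ii) is built into the previous step and into the minimality in the choice of $\theta_{i_k}$. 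The ordering $c_1\ge\cdots\ge c_J$ follows from a maximality argument: a wave from $\theta_{i_k}$ run strictly faster than $c_k$ would, after translation, sit above the wave from $\theta_{i_{k-1}}$ and glue to it, contradicting the choice of $c_k$ (or the sign conditions~\eqref{H1}). Uniqueness of the platform set and of each $\phi_j$ up to translation then comes from the sliding method, using the weak comparison principle of Theorem~\ref{thm:Kim2020} in the $\underline f,\overline f$ formulation --- which is what legitimizes comparison across the discontinuities --- together with the strong maximum principle where the profiles avoid the stable steady states; the rigidity of the speeds $c_k$ fixes the platforms and the usual sliding rigidity fixes each profile up to a shift. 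The bistable case $I=1$ is immediate, since $0$ and $1$ are then the only stable steady states.

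\emph{Main obstacle.} The crux is that at each stable steady state $\theta_{2i}$ the reaction $f$ jumps with $f(\theta_{2i}^-)>0>f(\theta_{2i}^+)$, so the planar ODE associated with the wave equation has no hyperbolic equilibrium there and is not forward-uniquely solvable at that point: the standard phase-plane toolkit --- stable and unstable manifolds, linearization, smooth dependence on parameters --- is unavailable and must be replaced by the finite-``time'' landing analysis and the square-root trajectory branches described above, with careful identification of the physically relevant branch. Intertwined with this is the need to show that the set of admissible speeds between two stable steady states is a closed interval depending monotonically on its endpoints, which is precisely what makes the greedy selection of the second stage terminate with ordered speeds; once these two points are in place, the remaining properties (compactness, connectedness, the bistable reduction) follow fairly routinely.
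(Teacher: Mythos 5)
This theorem is not proved in the present paper: it is imported verbatim from the companion work \cite{GilettiKimKim_existence}, and the only indication given here of how it is proved is the remark that existence and uniqueness of the terrace were established there ``using a phase plane analysis.'' There is therefore no in-paper proof to compare your proposal against, and I cannot certify your sketch line by line without consulting that reference. What can be said is that your outline is consistent both with that one-line description and with the single fragment of the argument the paper does spell out, namely the observation following Definition~\ref{connectedcompact} that a monotone wave joining two stable steady states where $f$ jumps must be compact; your square-root trajectory branch $\psi_c^2 \sim 2 f(\theta_{2j}^-)(\theta_{2j}-\phi)$ and the resulting finite-``time'' landing are exactly the phase-plane incarnation of that remark. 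The points you yourself flag as the crux --- selection of the correct branch leaving $(\theta_{2j},0)$ in the absence of a stable manifold, closedness and monotonicity in $c$ of the admissible trajectories, and the mechanism forcing $c_1\ge\cdots\ge c_J$ in the recursive selection --- are asserted rather than established in your sketch, so the proposal should be read as a plausible roadmap rather than a proof. One remark on the uniqueness step: since for each fixed $c$ the admissible trajectory out of $(\theta_{2j},0)$ is unique in the phase plane, uniqueness of each profile up to translation follows directly there; invoking the parabolic weak comparison principle of Theorem~\ref{thm:Kim2020} and a PDE sliding argument for this purpose is redundant and somewhat out of place in an ODE construction.
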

According to part $(ii)$ of Theorem~\ref{th:exist}, it is possible to shift traveling waves from the propagating terrace and to ``glue'' them by $C^1$-continuity to obtain another special solution of \eqref{eq:rd}. 
\begin{defi} \label{terracesolution}
Let $f$ satisfy \eqref{H1}--\eqref{H2}--\eqref{H3}, $\{\phi_j:j=1,\cdots,J\}$ be a propagating terrace, and $c_j$'s be the corresponding wave speeds. For given translation variables $\overrightarrow{\xi} = (\xi_1 , \cdots, \xi_J) \in\R^J$, the summation
\begin{equation}\label{Phi}
\Phi(t,x; \overrightarrow{\xi}):=\sum_{1 \leq j \leq J} (\phi_j (x - \xi_j - c_j t) - \theta_{i_{j-1}}),
\end{equation}
is called a {\bf terrace function}. 

If moreover $\Phi (\cdot,\cdot; \overrightarrow{\xi})$ solves \eqref{eq:rd} (possibly only for $t>T$ with $T>0$) then we call it a \textbf{terrace solution}.
\end{defi}
\begin{rem}\label{rem:solution}
Up to some shift, we may denote by $(0,\eta_j)$ the support of each~$\phi_j '$. Then it is straightforward to check that $\Phi (\cdot, \cdot; \overrightarrow{\xi})$ is a terrace solution for $t >0$ in the sense of Definition~\ref{terracesolution} if and only if
$$\xi_{j} \geq \xi_{j+1} + \eta_{j+1},$$
for any~$j$.
\end{rem}

We are now finally in a position to give our main findings. As announced earlier, our goal will be to show that these terrace solutions arise in the large time asymptotics of solutions of the Cauchy problem associated with \eqref{eq:rd}, for large classes of initial data. Thanks to the above properties, unlike in the case of a Lipschitz-continuous function $f$, here the attractiveness of the propagating terrace can be stated in a more straightforward way as the convergence of the solution to a terrace solution.

As a matter of fact, the large-time convergence to a traveling wave in the discontinuous framework is a new result, even in the bistable case. Therefore we first state our result in the simpler case when the propagating terrace contains a single traveling wave.
\begin{theo}[Convergence to a single traveling wave]\label{theo:stab_single} Let $f$ satisfy \eqref{H1}--\eqref{H2}--\eqref{H3}. Let also $\phi$ be a connected and compact traveling wave solution that monotonically connects two stable steady states $\theta_i< \theta_j$ and $c\in\R$ be its speed. Let also $u(t,x)$ be a solution of \eqref{eq:rd} supplemented with~\eqref{eq:ini}, and further assume that $u_0(x)$ is nonincreasing, and
\begin{equation*}
\lim_{x \to -\infty} u_0 (x) \in ( \theta_{j-1} , \theta_{j} ] , \qquad \lim_{x \to +\infty} u_0 (x) \in [\theta_i, \theta_{i+1}).
\end{equation*}
Then there exists $\xi_0 \in \R$ such that
\begin{equation*}
\|u(t,x)-\phi(x-\xi_0-ct)\|_{L^\infty (\R)} \to0\ \text{ as }\  t\to +\infty.
\end{equation*}
\end{theo}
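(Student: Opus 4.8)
The plan is to follow the classical sub- and super-solution squeezing strategy of Fife--McLeod, adapted to the discontinuous setting by using the weak comparison principle from Theorem~\ref{thm:Kim2020} and the regularity estimates announced in Section~\ref{sec:prelim}. First I would reduce to the case where $u_0$ takes values exactly in $[\theta_i,\theta_j]$: by the comparison principle and the fact that $\theta_i$ and $\theta_j$ are (one-sided) stable, a solution starting from data with limits in $(\theta_{j-1},\theta_j]$ and $[\theta_i,\theta_{i+1})$ is trapped, after a short time, between two translates of the compact wave $\phi$ together with suitable exponentially relaxing corrections near $\theta_i$ and $\theta_j$; in fact, since $f(\theta_i^+)<0<f(\theta_j^-)$, the solution reaches the flat states $\theta_j$ on a far-left half-line and $\theta_i$ on a far-right half-line \emph{in finite time}, which is the key simplification offered by the discontinuous framework. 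From that time on, $u(t,\cdot)$ is squeezed between $\theta_i$ and $\theta_j$ and agrees with the constants outside a compact (in space) region that may grow at most linearly in $t$.

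The core of the argument is then the construction of a pair of moving sub/super-solutions of the form
\[
w^\pm(t,x)=\phi\bigl(x-ct\mp \sigma(1-e^{-\beta t})\mp q(t)\bigr)\pm \delta e^{-\beta t},
\]
where $\delta,\sigma,\beta>0$ are chosen small, and $q(t)$ is a shift to be adjusted. One verifies that, on the region where $\theta_i<\phi<\theta_j$, the usual computation works because $f$ is Lipschitz there; on the two half-lines where $\phi$ is constant, one uses \eqref{H3}, namely $f(\theta_j^-)>0>f(\theta_j^+)$ and $f(\theta_i^-)>0>f(\theta_i^+)$, to check the weak sub/super-solution inequalities \eqref{def:super1}--\eqref{def:sub1} directly — here the one-sided functions $\underline f$ and $\overline f$ from Definition~\ref{def:sol1} enter, and the sign conditions guarantee that adding $+\delta e^{-\beta t}$ above $\theta_j$ (resp. subtracting below $\theta_i$) keeps us on the correct side. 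The transition points where $\phi$ touches $\theta_i$ or $\theta_j$ must be handled with care: since $\phi$ is only $C^1$ there, $w^\pm$ is $C^1$ but not $C^2$, which is precisely why we work with the weak notion of Definition~\ref{def:sol1}, and one checks that the corners point the ``good way'' (a concave corner for the super-solution, convex for the sub-solution), as is standard.

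Once such a pair is available, the proof concludes in two steps. First, using the trapping from step one and choosing the initial shift $q(0)$ large enough, I would show $w^-(t_0,\cdot)\le u(t_0,\cdot)\le w^+(t_0,\cdot)$ at some time $t_0$; by comparison this persists for all $t\ge t_0$, which already gives that $u$ stays within $O(\delta)$ of a bounded family of translates of $\phi$. Second, to upgrade this to genuine convergence to a single translate, I would run the standard ``$\omega$-limit set'' argument: any element of the $\omega$-limit set of $u$ (in $L^\infty_{loc}$, using the parabolic compactness from Section~\ref{sec:prelim}) is an entire solution squeezed between translates of $\phi$; one then applies the comparison principle together with a sliding argument (or a Lyapunov/zero-number-type monotonicity — but the cleanest here is sliding, exploiting that $\phi$ is the \emph{unique} monotone wave by Theorem~\ref{th:exist}) to force the $\omega$-limit set to be the single orbit $\{\phi(\cdot-\xi_0-ct)\}$, and finally promote $L^\infty_{loc}$ convergence to uniform convergence on $\R$ using again the finite-time flattening near $\theta_i$ and $\theta_j$, which controls the tails uniformly.

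The step I expect to be the main obstacle is the verification that $w^\pm$ are weak sub/super-solutions \emph{across} the junction points where $\phi$ meets the flat states and where $\overline f,\underline f$ jump: one must simultaneously track the $C^1$-but-not-$C^2$ corner of $\phi$ and the discontinuity of $f$, and confirm that the perturbation $\pm\delta e^{-\beta t}$ together with the drift $\mp\sigma\beta e^{-\beta t}$ produces the correct inequality in the distributional sense dictated by \eqref{def:super1}--\eqref{def:sub1}. A secondary technical point is making the finite-time flattening near the stable states quantitative and uniform in the relevant class of initial data, so that the tail control needed for uniform (not merely local) convergence is genuinely available.
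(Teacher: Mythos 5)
Your overall architecture (finite-time flattening near the stable states, trapping between translates of $\phi$, parabolic compactness, then a sliding argument on the $\omega$-limit set in the moving frame) matches the paper's proof via Propositions~\ref{th:support1}, \ref{prop:trap1}, \ref{prop:trap2} and the analysis of $\Omega_c(u)$. One structural difference: the paper never builds the Fife--McLeod perturbed pair $w^\pm=\phi(\cdot\mp\sigma(1-e^{-\beta t})\mp q)\pm\delta e^{-\beta t}$. Because the wave is compact and the solution reaches the flat states in finite time, exact translates of $\phi$ already trap $u$ (Proposition~\ref{prop:trap1}), so the $\pm\delta e^{-\beta t}$ layer and its delicate verification across the corners and the interior discontinuities of $f$ are simply not needed. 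That part of your plan is not wrong, but it is a detour, and it does not by itself produce convergence to a \emph{single} translate.

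The genuine gap is in your final step, which you dispatch as ``a standard sliding argument.'' In this discontinuous setting the strong maximum principle fails: two distinct translates $\phi(\cdot)$ and $\phi(\cdot-X)$ are both entire solutions that coincide on two half-lines and are ordered but not equal in between, so a contact point between an $\omega$-limit $u_\infty$ and an extremal translate of $\phi$ gives no information whatsoever if it lies on a flat portion of $\phi$. The paper's entire technical effort is devoted to this: Proposition~\ref{prop:strong} establishes a \emph{partial} strong maximum principle valid only when the contact point lies in the extended support $(0,\eta]$ of $\phi'$ (with a separate Hopf-lemma statement at the endpoint $\eta$); Proposition~\ref{prop:stab_12} then shows by a case analysis that the extremal contact can always be forced into that set, the problematic case of contact at height $\theta_j$ being excluded by a Hopf-lemma/definition-of-$X_-$ contradiction; and Lemma~\ref{lem:stab_new1} needs an integration (mass) argument on an interval where $u$ hovers just below $\theta_j$ to show the level set $x_1(t)$ cannot lag behind, which is what lets a lower bound obtained for one $\omega$-limit propagate to all of them (Proposition~\ref{prop:stab_11}). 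None of this is addressed in your proposal, and without it the sliding step does not close: you cannot conclude that the $\omega$-limit set is a single orbit merely from uniqueness of the monotone wave.
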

Notice that, as explained previously, the assumption that $\phi$ is compact may be omitted since it is a consequence of the discontinuities of $f$, and the fact that its limiting steady states are stable. We included it anyway in our statement for convenience.

Theorem~\ref{theo:stab_single} is consistent with well-known results when the reaction function is smooth~\cite{FifeMcLeod77}. Yet we are also able to handle the case when the terrace contains an arbitrary number of platforms. Our most general convergence result reads as follows.
\begin{theo}[Convergence to a terrace solution]\label{theo:stab_terrace} Let $f$ satisfy \eqref{H1}--\eqref{H2}--\eqref{H3}, $\{\phi_j\}$ be the propagating terrace of \eqref{eq:rd} connecting $0$ and $1$, and $\{c_j\}$ be the corresponding wave speeds for $j=1\cdots,J$. Let also $u(t,x)$ be a solution of \eqref{eq:rd} supplemented with \eqref{eq:ini}, and further assume that $u_0(x)$ is nonincreasing, and
\begin{equation*}
\lim_{x\to -\infty} u_0 (x) \in (\theta_{2I-1}, 1],\qquad \lim_{x\to +\infty} u_0 (x) \in [0, \theta_1).
\end{equation*}
Then there exists $\overrightarrow{\xi}=(\xi_1,\cdots,\xi_J)\in\R^J$ such that
\begin{equation*}
\|u(t,x)-\Phi(t,x;\overrightarrow{\xi})\|_\infty\to0\ \text{ as }\  t\to\infty,
\end{equation*}
where $\Phi (t,x;\overrightarrow{\xi})$ is the terrace solution defined in~\eqref{Phi}.
\end{theo}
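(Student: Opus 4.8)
\section*{Proof proposal for Theorem~\ref{theo:stab_terrace}}

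The plan is to bootstrap from the single-wave result, Theorem~\ref{theo:stab_single}, using the comparison principle together with an induction on the platforms of the terrace. First I would establish a coarse description of the solution's profile: since $u_0$ is nonincreasing with limits in $(\theta_{2I-1},1]$ and $[0,\theta_1)$, the comparison principle (Theorem~\ref{thm:Kim2020}) guarantees $u(t,\cdot)$ remains nonincreasing in $x$ for all $t>0$, and is squeezed between $0$ and $1$. The key preliminary step is to show that the solution ``spreads'' according to the terrace, i.e. that for each platform $\theta_{i_j}$ the level set $\{x : u(t,x) = \theta_{i_j}\}$ moves with asymptotic speed $c_j$, and that near each such level set the profile looks like a shift of $\phi_j$. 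I expect this to be the crux of the argument and the main obstacle (see below).

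The induction scheme I would use runs as follows. Consider the lowest platform transition, from $0$ to $\theta_{i_1}$, governed by $\phi_1$ with the largest speed $c_1$. Using monotonicity and the ordering of speeds $c_1 \geq \cdots \geq c_J$, one can construct sub- and super-solutions that decouple the leading front: on a moving frame travelling at speed $c_1$, the solution is trapped between two shifts of $\phi_1$ plus small errors, while the remaining ``bulk'' stays above $\theta_{i_1} - o(1)$ in the relevant region. To make this rigorous I would invoke the well-known construction of sub/super-solutions of the form $\phi_1(x - c_1 t \mp \sigma(1-e^{-\beta t})) \pm \delta e^{-\beta t}$ (adapted to the discontinuous setting, using $\underline f, \overline f$ as in Definition~\ref{def:sol1}); the compactness of $\phi_1$ from Theorem~\ref{th:exist}$(ii)$ is what allows these to be glued cleanly. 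Applying Theorem~\ref{theo:stab_single} with a suitably restricted (spatially shifted and truncated) initial datum — cut off where $u_0$ drops below $\theta_{i_1}$ — yields convergence of the leading part of $u$ to a shift $\phi_1(x - \xi_1 - c_1 t)$. One then ``peels off'' this front: for $x$ in the region where $u$ has already passed $\theta_{i_1}$, the dynamics of $u - $ (contribution of earlier waves) is again that of \eqref{eq:rd} seen from the platform $\theta_{i_1}$, i.e. a multistable problem with one fewer platform, to which the induction hypothesis applies, producing $\phi_2(x - \xi_2 - c_2 t)$, and so on down to $\phi_J$.

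Two technical points must be handled with care. First, the shifts $\xi_j$ produced by successive applications of Theorem~\ref{theo:stab_single} need not a priori satisfy the compatibility condition $\xi_j \geq \xi_{j+1} + \eta_{j+1}$ of Remark~\ref{rem:solution} that makes $\Phi(\cdot,\cdot;\overrightarrow\xi)$ an actual solution; however, when the speeds are \emph{strictly} ordered $c_j > c_{j+1}$ the fronts separate linearly in time so $\Phi(t,\cdot;\overrightarrow\xi)$ is asymptotically a solution regardless of $\overrightarrow\xi$, and when $c_j = c_{j+1}$ one argues as in~\cite{DGM,GM} that consecutive waves of equal speed are forced apart (or one re-chooses the shifts), restoring Remark~\ref{rem:solution}. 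Second, I would glue the $J$ partial convergence estimates into a single uniform-in-$x$ estimate by a partition-of-space argument: on overlapping windows around each moving front the error is $o(1)$ by the local convergence, and in between (where $u$ is uniformly close to a platform $\theta_{i_j}$, as is $\Phi$) the error is also $o(1)$; summing the finitely many contributions gives $\|u - \Phi\|_{L^\infty(\R)} \to 0$.

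The main obstacle is the first step — proving that the solution genuinely develops the full terrace structure (all $J$ fronts appear, with the correct platforms and speeds, and nothing stalls at an intermediate unstable state or skips a platform). In the smooth case this is the content of the propagation/convergence theory of~\cite{DGM,GM,Polacik,GilettiRossi}; here one must re-derive it in the discontinuous framework, relying on (a) the regularizing approximation and compactness estimates promised in Section~\ref{sec:prelim}, (b) a zero-number / intersection-number argument or a careful $\omega$-limit set analysis showing that any limit of time-translates of $u$ (in the $c_j$-moving frame) is an entire solution trapped between two ordered platforms, hence — by the uniqueness in Theorem~\ref{th:exist} and a Liouville-type classification — a shift of $\phi_j$. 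Once this structural fact is in hand, the inductive peeling described above is comparatively routine.
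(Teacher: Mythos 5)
Your overall skeleton --- trap the solution between shifted terrace functions, pass to moving frames, and reduce to Theorem~\ref{theo:stab_single} --- matches the paper's strategy for the easy half of the problem: when the speeds are strictly ordered, the paper's Proposition~\ref{prop:trap3} (trapping between two shifts of the full terrace, which works with exact shifts of the compact profiles and needs no additive correction) plus an exact renormalization between two consecutive platforms reduces everything to the single-wave theorem, essentially as you describe. The genuine gap is in your treatment of the case $c_j = c_{j+1}$. You dispose of it by asserting that ``consecutive waves of equal speed are forced apart (or one re-chooses the shifts)'' as in the smooth theory; this is not what happens here, and it is precisely the point on which the bulk of Section~\ref{sec:terrace} is spent. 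Waves sharing a speed travel together forever, and the limit object is a single partial terrace solution $\Phi^c$ in which consecutive compact profiles are glued at finite distance; the whole difficulty is to prove that the limiting shifts satisfy the compatibility condition $\xi_j \ge \xi_{j+1}+\eta_{j+1}$ of Remark~\ref{rem:solution}, i.e. that the limit is an actual solution of \eqref{eq:rd} and not merely a formal sum of profiles.

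The paper handles this by an $\omega$-limit analysis in the common moving frame: it defines critical shifts $X_1^-,\dots,X_J^-$ recursively (each $X_{\ell+1}^-$ optimal given $X_1^-,\dots,X_\ell^-$), proves a multi-wave sliding lemma (Lemma~\ref{lem:stab_new2}) whose hypothesis requires the comparison datum to already be a terrace \emph{solution}, and then shows (Lemma~\ref{lowerfront}, Proposition~\ref{prop:last}) both that the critically shifted terrace function lies in $\Omega_c(u)$ and that the inequality $-X_{\ell+1}^- - \eta_{\ell+1}\le -X_\ell^-$ holds, using Proposition~\ref{prop:strong} and a Hopf-lemma argument in place of the unavailable strong maximum principle. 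None of this machinery appears in your proposal. Two secondary issues: your Fife--McLeod sub/super-solutions $\phi_1(x-c_1t\mp\sigma(1-e^{-\beta t}))\pm\delta e^{-\beta t}$ do not obviously survive the discontinuity of $f$ at the platforms (the additive perturbation pushes the profile into the region where $f$ jumps down, so the usual verification via $f'<0$ near the stable states fails as stated); and your step of truncating $u_0$ where it drops below $\theta_{i_1}$ needs the exact identity $u(t,\cdot)\equiv p_j$ on a moving half-line --- a consequence of the compactness of the waves, exploited in the paper's renormalization $\tilde u$ --- to legitimately decouple the sub-problems, since an approximate truncation would not let you apply Theorem~\ref{theo:stab_single} to the original solution.
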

Again this is consistent with earlier results for smooth reaction functions~\cite{DGM,FifeMcLeod77,GM}, where some piecewise convergence of the solution toward each of the traveling waves of the propagating terrace was shown. However, as we already pointed out, the novelty of this result is the fact that the large-time dynamics is dictated by the terrace solution, which is a single special solution of~\eqref{eq:rd}.

\paragraph{Plan of the paper:} In Section~\ref{sec:prelim}, we start with some preliminaries which are some weak and strong comparison principles, as well as some parabolic estimates. Then in Section~\ref{sec:single}, we deal with the case when a single traveling wave connects directly $0$ and $1$ and prove our first main Theorem~\ref{theo:stab_single}. We again highlight the fact that this already includes the bistable case, which may be the most common in the applications.

Finally, in Section~\ref{sec:terrace}, we prove the more general result Theorem~\ref{theo:stab_terrace} when the terrace has an arbitrary number of waves. It may be worth pointing out that, as we will discuss there, this is actually a straightforward corollary of Theorem~\ref{theo:stab_single} wthen the speeds of the traveling waves are strictly ordered. However, additional difficulties arise when several waves of the terrace share the same speed, and Section~\ref{sec:terrace} will be mostly devoted to handling this situation.

\section{Preliminaries}\label{sec:prelim}

In this section, we give some useful results on solutions of \eqref{eq:rd} when $f$ may be discontinuous at the stable steady states. First we recall that a weak comparison principle is available~\cite{Kim2020}:
\begin{theo}[Weak comparison principle]\label{th:weak_comp}
Let $\underline{u}$ and $\overline{u}$ be respectively a sub and a super-solution of \eqref{eq:rd}. If $\underline{u} (0,\cdot) \leq \overline{u} (0,\cdot)$ then $\underline{u} (t,\cdot) \leq \overline{u} (t,\cdot)$ for all $t \geq 0$.
\end{theo}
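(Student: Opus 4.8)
The plan is a weighted $L^2$ energy estimate for the positive part of the difference, closed by Gronwall's lemma. Write $w := \underline u - \overline u$; the goal is to show $w^+ := \max(w,0) \equiv 0$, knowing $w^+(0,\cdot) \equiv 0$ since $\underline u(0,\cdot)\le\overline u(0,\cdot)$. The first ingredient is an elementary one-sided Lipschitz bound that absorbs the discontinuities: there exists $L>0$, depending only on $f$, such that
\[
\overline f(a) - \underline f(b) \le L\,(a-b) \qquad \text{for all } a > b .
\]
Indeed, by \eqref{H2} the derivative $f'$ is bounded on $\R\setminus\{\theta_{2i}\}$ and by \eqref{H3} the jumps of $f$ at the stable steady states $\theta_{2i}$ point \emph{downwards}; hence, choosing $L \ge \|f'\|_{L^\infty(\R\setminus\{\theta_{2i}\})}$, the map $s \mapsto f(s) - Ls$ is non-increasing between consecutive discontinuities and has only downward jumps, and the displayed inequality follows by comparing one-sided limits. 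This is exactly where we use that $f$ is continuous (in fact $C^1$) at the unstable states $\theta_{2i-1}$; with an upward jump the estimate would fail.

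Combining the defining weak inequalities of Definition~\ref{def:sol1} for $\underline u$ and $\overline u$, the difference $w$ satisfies, in the weak sense,
\[
\partial_t w - \partial_{xx} w \le \overline f(\underline u) - \underline f(\overline u) .
\]
I would test this against $\psi = \chi(t)\,\rho(x)^2\,w^+$, where $\rho(x) := e^{-\sqrt{1+x^2}}$ (so that $\rho>0$, $\rho^2 \le e^{-|x|}$, and $|\rho'| + |\rho''| \le C\rho$) and $\chi$ is a smooth temporal cutoff approximating $\mathbf{1}_{[0,t]}$. Since $w \in H^1_{loc}$ with $\partial_x w^+ = \mathbf{1}_{\{w>0\}}\partial_x w$ and $\partial_t w^+ = \mathbf{1}_{\{w>0\}}\partial_t w$ (Stampacchia, using $\partial_t u \in L^2_{loc}$), and since the $e^{-|x|}$-weighted $L^2$ and $H^1$ bounds built into Definition~\ref{def:sol1} control $\rho^2 w^+$ at spatial infinity, this test function is admissible after a standard truncation-in-$x$ and mollification. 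Carrying out the integrations by parts gives, for a.e.\ $t>0$ and with $E(t) := \int_\R \rho^2 (w^+)^2(t,\cdot)\,dx$,
\[
\tfrac12 E'(t) + \int_\R \rho^2 |\partial_x w^+|^2\,dx \le \int_\R \big(\overline f(\underline u) - \underline f(\overline u)\big)\rho^2 w^+\,dx + C\int_\R \rho^2 (w^+)^2\,dx ,
\]
the last term collecting the contributions of $\rho'$ and $\rho''$ after a Young inequality and the bounds $|\rho'|,|\rho''| \le C\rho$. On the set $\{w>0\}$ we have $\underline u > \overline u$, so the one-sided Lipschitz estimate bounds the reaction term by $L\,E(t)$. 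Hence $E'(t) \le 2(L+C)E(t)$ with $E(0)=0$, whence $E\equiv 0$ by Gronwall, i.e.\ $w^+\equiv 0$ and $\underline u \le \overline u$.

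The only genuinely delicate point is the control of $\overline f(\underline u) - \underline f(\overline u)$ on $\{\underline u > \overline u\}$: this quantity is \emph{not} Lipschitz in $(\underline u,\overline u)$ near the discontinuity locus — in fact the bound fails outright when $\underline u = \overline u$ is a jump point — and the argument survives only because on the relevant set the ordering $\underline u > \overline u$ is strict and the jumps point in the favourable direction. This is precisely the structural reason discontinuities are allowed at the stable steady states and nowhere else. The remaining ingredients — the justification of the test function and the integrations by parts within the weak $H^1$ framework of Definition~\ref{def:sol1}, and the handling of the spatial behaviour at infinity through the weight $\rho$ — are technical but routine.
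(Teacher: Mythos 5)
The paper itself does not prove Theorem~\ref{th:weak_comp}: it is quoted from \cite{Kim2020}, so there is no in-house argument to compare against. Your proof is nonetheless sound and is, in substance, the standard argument for this setting (and the one the cited reference is built around --- the $e^{-|x|}$-weighted bounds and the requirement $\partial_t u\in L^2_{loc}$ in Definition~\ref{def:sol1} are there precisely so that this weighted Gronwall argument closes). The decisive structural point is the one-sided Lipschitz inequality $\overline f(a)-\underline f(b)\le L(a-b)$ for $a>b$, which holds because \eqref{H2} gives a Lipschitz bound away from the $\theta_{2i}$ while \eqref{H3} forces every jump of $f$ to point downwards; you identify this correctly, as well as the reason the estimate is only needed on the open set $\{\underline u>\overline u\}$, where strict ordering saves you from the failure of the bound at $a=b=\theta_{2i}$. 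One caveat you should make explicit: as literally written, Definition~\ref{def:sol1} and the remark following it attach the inequality $\partial_t u-\partial_{xx}u\le\overline f(u)$ to the \emph{super}-solution (\eqref{def:super1}) and $\partial_t u-\partial_{xx}u\ge\underline f(u)$ to the \emph{sub}-solution (\eqref{def:sub1}), which is the opposite of what your derivation of $\partial_t w-\partial_{xx}w\le\overline f(\underline u)-\underline f(\overline u)$ requires; the labels in the definition are evidently swapped (the paper's own use of sub-solutions in the proof of Proposition~\ref{th:support1} confirms the convention you adopt, and under the literal reading the theorem would be false), so your reading is the intended one, but the discrepancy deserves a sentence. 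The deferred technicalities --- admissibility of $\chi\rho^2 w^+$ as a test function via spatial truncation and time mollification, the identities $\partial_t w^+=\mathbf{1}_{\{w>0\}}\partial_t w$ and $\partial_x w^+=\mathbf{1}_{\{w>0\}}\partial_x w$, and the passage to the limit in the cutoff using the weighted integrability --- are indeed routine given the framework of Definition~\ref{def:sol1}.
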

It is rather clear that this comparison principle ensures that the solution of \eqref{eq:rd}-\eqref{eq:ini} is unique, and also that it remains bounded from above and below, respectively by $1$ and $0$. A key point of the proof of our convergence result will be to frame the solution between two terrace solutions, and this will rely on an extensive use of sub- and super-solutions.\medskip

Next we turn to some estimates on the solutions of \eqref{eq:rd}, which are to be understood in the weak sense of Definition~\ref{def:sol1}. These will be useful to get some compactness and pass to the limit as $t \to +\infty$. 

\begin{lem}\label{lemtn}
	Let $u(t,x)$ be a weak solution of \eqref{eq:rd}--\eqref{eq:ini}, and for any $t >1$ and $x_0 \in \mathbb{R}$, denote
$$Q_{T,R} (t_0,x_0) = (t_0,t_0+T) \times (x_0 - R, x_0 + R).$$
Then for any $p\geq 1$,we have
	\begin{equation}\label{estimation1}
	\| u (\cdot,\cdot) \|_{W^{1,2}_p (Q_{T,R} (t_0,x_0) )}^2  \le C_1 (p,T,R)< \infty ,
	\end{equation} 
and for any  $\beta \in (0,1)$, 
\begin{equation}\label{estimation2}
	\| u (\cdot,\cdot ) \|_{C^{(1+\beta)/2, 1+\beta} (Q_{T,R} (t_0,x_0 ))}  \le C_2 (\beta,T,R) < \infty,
	\end{equation}
where $C_1, C_2$ are positive constants which do not depend on $t_0$ and $x_0$.
\end{lem}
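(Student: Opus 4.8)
The plan is to transfer classical interior parabolic regularity from the smooth approximations that were used to build $u$ in \cite{Kim2020}, the whole point being that the nonlinearity, once evaluated along those approximations, is bounded in $L^\infty$ by a quantity that does not depend on anything. Concretely, I would recall from \cite{Kim2020} that $u$ is obtained by regularization: $f$ is replaced by smooth reactions $f_n$ with $\|f_n\|_{L^\infty(\R)} \le M$ for all $n$, with $M$ depending only on $\|f\|_{L^\infty(\R)}$, and the classical solutions $u_n$ of $\partial_t u_n = \partial_{xx} u_n + f_n(u_n)$ converge to $u$, say locally uniformly on $\R_+ \times \R$ (up to a subsequence). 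In particular $\|f_n(u_n)\|_{L^\infty(\R_+\times\R)} \le M$ uniformly in $n$, so that each $u_n$ is a strong solution of the heat equation whose forcing term is controlled independently of its location in space-time.

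Fixing then $t_0 > 1$, $x_0 \in \R$, $T, R > 0$ and $p \ge 1$, I would set $Q := Q_{T,R}(t_0,x_0)$ and enlarge it to $Q' := (t_0 - \tfrac12, t_0 + T + 1) \times (x_0 - R - 1, x_0 + R + 1)$, so that $\overline Q \subset Q' \subset (\tfrac12, +\infty) \times \R$, and apply the standard interior $L^p$ (Calder\'on--Zygmund) estimate for the heat equation on the pair $Q \subset Q'$:
\[
\| u_n \|_{W^{1,2}_p(Q)} \le C(p,T,R)\left( \| f_n(u_n) \|_{L^p(Q')} + \| u_n \|_{L^p(Q')} \right).
\]
By translation invariance of $\partial_t - \partial_{xx}$, the constant $C(p,T,R)$ depends only on $p$ and on the fixed shape of the pair $Q \subset Q'$, not on $(t_0,x_0)$; and its right-hand side is bounded uniformly in $n$ and $(t_0,x_0)$ because $\|f_n(u_n)\|_{L^p(Q')} \le M\,|Q'|^{1/p}$ and $(u_n)$, being convergent in $L^p(Q')$, is bounded there. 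Letting $n \to \infty$ and identifying the weak $W^{1,2}_p(Q)$-limit of $(u_n)$ with $u$ — legitimate since $W^{1,2}_p(Q)$ embeds compactly into $L^p(Q)$, where $u_n \to u$ — would give $u \in W^{1,2}_p(Q)$ with a bound depending only on $p,T,R$, which is \eqref{estimation1} after squaring the constant. (The case $p = 1$ reduces to $p=2$ via $W^{1,2}_2(Q) \subset W^{1,2}_1(Q)$.)

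Finally, \eqref{estimation2} would follow from \eqref{estimation1} by the parabolic Sobolev embedding: given $\beta \in (0,1)$, choosing $p = p(\beta)$ with $1 - 3/p \ge \beta$ (the space dimension being $1$) gives $W^{1,2}_p(Q) \hookrightarrow C^{(1+\beta)/2,\,1+\beta}(\overline Q)$, with embedding constant depending only on $\beta$ and the shape of $Q$. The one genuinely delicate point — the one I would be most careful about — is the uniformity of $C_1$ and $C_2$ in $(t_0,x_0)$: it relies on the uniform $L^\infty$ bound on $f_n(u_n)$ together with the fact that one stays a fixed distance away from $t=0$, which is exactly what the hypothesis $t_0>1$ provides (near $t=0$ only the weaker $C^{(1+\alpha)/2,1+\alpha}$ regularity inherited from the $C^{1,\alpha}_{loc}$ datum $u_0$ is available); everything else is routine interior parabolic theory. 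As an alternative to passing through the approximating sequence, one could instead read off directly from Definition~\ref{def:sol1} that $u$ solves $\partial_t u - \partial_{xx} u = g$ in the sense of distributions for some $g \in L^\infty$ with $\|g\|_{L^\infty} \le \|f\|_{L^\infty(\R)}$, and apply the same interior estimates to $u$ itself.
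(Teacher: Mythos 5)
Your proposal is correct and follows essentially the same route as the paper: both regularize $f$ by smooth reactions with a uniform $L^\infty$ bound (as in the existence proof of \cite{Kim2020}), apply standard interior parabolic $L^p$ estimates to the approximating classical solutions on translated cylinders so that the constants depend only on $p,T,R$, pass to the limit, and then obtain the H\"older bound by parabolic Sobolev embedding. Your added care about enlarging the cylinder, the weak $W^{1,2}_p$ limit identification, and the $p=1$ case are harmless refinements of the same argument.
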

\begin{proof}
We apply the method used in the proof of Theorem~5.1 of \cite{Kim2020}, which is the existence result we recalled in Theorem~\ref{thm:Kim2020} above. The key idea is to approximate the discontinuous function $f$ by smooth functions $f^\varepsilon $. More precisely we fix $\varepsilon_0 >0$ and choose $f^\varepsilon (u)$ such that, for any $0 < \varepsilon < \varepsilon_0$:
	\begin{enumerate}
		\item $f^\varepsilon(\theta_{2i}) = \lim_{u \to \theta_{2i}^-} f(u)$ for all $0 \leq i \leq I$, and $|f^\varepsilon (u)| < M_1 +1$ for some $M_1 >0$ and any $u\ge 0$;
		\item $f^\varepsilon \downarrow f $ as $\varepsilon \to 0$ for all $u \in \R \setminus \{\theta_{2i}\,| \ 0\leq i \leq I \}$ and $f(u) \le f^\varepsilon(u) \le f(u) + \varepsilon$ for $u \notin  \cup_{0 \leq i \leq I} [\theta_{2i}, \theta_{2i}+\varepsilon]$;
		\item $f^\varepsilon(1+\delta) = 0$ for some $0 < \delta < \varepsilon$.
	\end{enumerate}	
	Then, we get a regularized problem, 
	\begin{equation} \label{continuous problem}
	\begin{cases}
	\partial_t u^\varepsilon = \partial_{xx} u^\varepsilon + f^\varepsilon(u^\varepsilon), & t >0 , \ \ x \in \R, \\
	u^\varepsilon(0,x) = u_0(x), & x \in \R ,
	\end{cases}
	\end{equation}
	which is well-posed in a classical sense, for instance by a standard monotone method~\cite{Sattinger}. 
We also have that $\overline{u} = 1+ \delta$ and $\underline{u} = 0$ are smooth super- and sub-solutions of \eqref{continuous problem}, so that
	\begin{equation*}
	0 \le u^\varepsilon (t,x) \le 1 +\delta, \quad \mbox{ for all } x \in \R,\  t>0.
	\end{equation*}
	Finally, thanks to (the proof of) Theorem 5.1 of \cite{Kim2020}, we know that $u^\varepsilon$ converges to $u$ in $C^{(1+\alpha)/2, 1+\alpha}_{loc}(\R_+\times \R)$.
	
Furthermore, by standard parabolic estimates (see for instance Theorem~5.2.5 in~\cite{KrylovSobolev}), we have for any $p \geq 1$, $t_0 >1$, $x_0 \in \mathbb{R}$ and $T,R>0$, that
$$\|u^\varepsilon\|_{W^{1,2}_p ( Q_{T,R} (t_0,x_0) ) } \leq C \| f^\varepsilon\|_{L^\infty (\mathbb{R})},$$
where $C$ is a positive constant depending only on~$T$, $R$ and~$p$. Then, by standard embeddings, we also get that
$$\| u^\varepsilon  \|_{C^{(1+\beta)/2, 1+\beta} (Q_{T,R} (t_0,x_0))}  \leq C \| f^\varepsilon\|_{L^\infty (\mathbb{R})} ,$$
for any $\beta \in (0,1)$, where $C$ possibly denotes another positive constant. Noting that $f^\varepsilon$ is bounded independently of $\varepsilon$, and passing to the limit as $\varepsilon \to 0$, we already get~\eqref{estimation1} and~\eqref{estimation2}.
	This completes the proof of Lemma~\ref{lemtn}.
\end{proof}

Lastly, our proofs will use some kind of sliding argument. However, such a sliding argument usually relies on a strong maximum principle, which is not valid in the discontinuous case. However, partial strong maximum type results still hold, in particular when considering a traveling wave solution. This is the subject of the following proposition. To avoid any ambiguity, let us clarify that, by an entire in time solution of \eqref{eq:rd}, we will simply mean a function $u$ such that $u (\cdot + \tau, \cdot)$ is a weak solution in the sense of Definition~\ref{def:sol1}, for any $\tau \in \R$.

\begin{prop}\label{prop:strong}
Let $f$ satisfy \eqref{H1}--\eqref{H2}--\eqref{H3}, and $\phi$ be a connected and compact traveling wave connecting $\theta_{2i}$ and $\theta_{2j}$ ($i<j$) in the sense of Definitions~\ref{def:twsol} and~\ref{connectedcompact}. Up to some shift, we denote
$$\text{spt} (\phi ') = (0,\eta).$$
Let also $u_\infty$ be an entire in time solution which we assume to be nonincreasing with respect to the space variable, and such that
$$u_\infty (t,x+ct) \geq \phi (x ),$$
for all $t \in \mathbb{R}$ and $x \in (0,\eta)$.
\begin{enumerate}[$(i)$]
 \item If $u_\infty (t_0 ,x_0+ct) = \phi (x_0)$ for some $t_0 \in \mathbb{R}$ and $x_0 \in (0,\eta)$, then 
$$u_\infty (t,x+ct) = \phi (x),$$
for all $t \in \mathbb{R}$ and $x \in [0,\eta]$.
\item If $u_\infty (t_0, \eta + ct_0) = \phi (\eta) = \theta_{2i}$ and $\partial_x u_\infty (t_0 , \eta + ct_0 ) \geq 0$ for some $t_0 \in \mathbb{R}$, then 
$$u_\infty (t,x+ct) = \phi (x),$$
for all $t \in \mathbb{R}$ and $x \in [0,\eta]$.
\end{enumerate}
\end{prop}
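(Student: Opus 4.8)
The plan is to treat the two statements as instances of a single mechanism: below the steady state $\theta_{2i}$, the reaction term is nicely Lipschitz-continuous, so a bona fide strong maximum principle is available there, while the degeneracy only lives at the level $\theta_{2i}$ itself where $\phi$ becomes constant. First I would pass to the moving frame $v_\infty(t,x) := u_\infty(t,x+ct)$, which is an entire-in-time weak solution of the reaction-advection equation $\partial_t v = \partial_{xx} v + c\,\partial_x v + f(v)$, and set $w := v_\infty - \phi \geq 0$ on $\mathbb{R} \times (0,\eta)$. On the open strip where $0 < \phi < \theta_{2i}$, i.e. on $x \in (0,\eta)$, the function $f$ is Lipschitz, so $w$ satisfies a linear parabolic inequality $\partial_t w - \partial_{xx} w - c\,\partial_x w - b(t,x) w = 0$ with $b \in L^\infty$ (using $f(v_\infty) - f(\phi) = b\,w$), and the classical strong maximum principle applies: if $w$ vanishes at an interior point $(t_0,x_0)$ with $x_0 \in (0,\eta)$, then $w \equiv 0$ for all $t \leq t_0$ and all $x$ in the connected component, hence on $(-\infty,t_0] \times (0,\eta)$. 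Because $v_\infty$ is entire in time, a standard argument (translating $t_0$ forward and using that the set of contact times is then all of $\mathbb{R}$, or invoking the strong maximum principle on each slab $(-\infty, t_0+n]$) upgrades this to $w \equiv 0$ on $\mathbb{R} \times (0,\eta)$. Continuity of both $v_\infty$ and $\phi$ (Theorem~\ref{thm:Kim2020}) then extends the identity to the closed interval $[0,\eta]$, giving $(i)$.

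For $(ii)$, the contact point sits at the right endpoint $x = \eta$, where $\phi(\eta) = \theta_{2i}$ and $f$ jumps; this is exactly where the strong maximum principle fails, so the interior argument of $(i)$ does not directly apply and I need the extra hypothesis $\partial_x v_\infty(t_0,\eta) \geq 0$. Since $v_\infty$ is nonincreasing in $x$ we also have $\partial_x v_\infty(t_0,\eta) \leq 0$, hence $\partial_x v_\infty(t_0,\eta) = 0$; combined with $v_\infty(t_0,\eta) = \theta_{2i} = \phi(\eta)$ and $w \geq 0$ on $(0,\eta)$, the point $(t_0,\eta)$ is a boundary minimum of $w$ at which the outward normal derivative vanishes. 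The natural tool is then the Hopf boundary lemma applied on the domain $(-\infty,t_0] \times (0,\eta)$ (or a parabolic cylinder $(t_0 - \delta, t_0] \times (\eta - \delta, \eta)$): for the uniformly parabolic operator with bounded zeroth-order coefficient on the side $\phi < \theta_{2i}$, if $w > 0$ in the interior then the normal derivative at a boundary minimum must be strictly negative, i.e. $\partial_x w(t_0,\eta) < 0$, which since $\partial_x \phi(\eta) = 0$ forces $\partial_x v_\infty(t_0,\eta) < 0$, contradicting $\partial_x v_\infty(t_0,\eta) = 0$. Hence $w$ must vanish at some interior point of $(0,\eta)$ (for times near $t_0$), and we are back in case $(i)$.

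There is a subtlety to be careful about in applying Hopf: one needs $w$ to be a genuine supersolution of a linear operator near $(t_0,\eta)$, which requires controlling $f(v_\infty) - f(\phi)$ for $v_\infty$ slightly above $\theta_{2i}$ — but here the sign condition \eqref{H3}, $f(\theta_{2i}^+) < 0$, is an \emph{advantage}: when $\phi \equiv \theta_{2i}$ on $x > \eta$ and $v_\infty \geq \theta_{2i}$, the quantity $\overline{f}(v_\infty) \leq f(\theta_{2i}^+) < 0$ near that level (using $\overline f$ from Definition~\ref{def:sol1} and continuity of $f$ away from $\theta_{2i}$), so $w = v_\infty - \theta_{2i}$ is actually a supersolution of $\partial_t w = \partial_{xx} w + c\,\partial_x w$ on the region $x \geq \eta$ with $w \geq 0$, and a bona fide (possibly nonlinear but sign-favorable) supersolution on $x \leq \eta$; this is precisely the structure under which the Hopf lemma and strong maximum principle survive the discontinuity.

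\textbf{Main obstacle.} The delicate point is the rigorous justification of the strong maximum principle and Hopf lemma for the \emph{weak} solution $v_\infty$ at the regularity available (only $C^{(1+\beta)/2,1+\beta}_{loc}$ and $W^{1,2}_{p,loc}$, from Lemma~\ref{lemtn}), together with the bookkeeping at the interface $x = \eta$: one must ensure that the zeroth-order coefficient $b(t,x) = (f(v_\infty)-f(\phi))/(v_\infty - \phi)$ is genuinely bounded where it is used (true on $\{0 < \phi < \theta_{2i}\}$ by the Lipschitz bound in \eqref{H2}, but one has to stay strictly away from the level $\theta_{2i}$, which is why the argument is localized to the open interval $(0,\eta)$ and the endpoint is treated separately via Hopf and the favorable sign of $f(\theta_{2i}^+)$). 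The globalization in time — turning a one-sided ($t \leq t_0$) propagation of the contact set into a full identity on $\mathbb{R} \times [0,\eta]$ using that $u_\infty$ is entire — is routine once the one-sided statement is in hand, so I expect essentially all the difficulty to concentrate in the careful local analysis near $(t_0,\eta)$.
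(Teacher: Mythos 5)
Your outline captures the right local mechanisms (interior strong maximum principle where $f$ is Lipschitz, Hopf lemma at the endpoint $\eta$ combined with the sign hypothesis on $\partial_x u_\infty$, which is exactly how the paper handles the endpoint case), but it rests on a false premise: you assert that on $x\in(0,\eta)$ the range of $\phi$ avoids the discontinuities of $f$, so that $b=(f(v_\infty)-f(\phi))/(v_\infty-\phi)$ is bounded on the whole strip. On $(0,\eta)$ the wave runs through all values in $(\theta_{2i},\theta_{2j})$, and when $j>i+1$ this interval contains the stable states $\theta_{2i+2},\dots,\theta_{2j-2}$, at each of which $f$ jumps by \eqref{H3}. (This is not a corner case: Theorem~\ref{theo:stab_single} explicitly allows a single wave connecting $0$ and $1$ even when $I>1$.) Consequently the linearized coefficient is unbounded along the level curves $\phi=\theta_{2\ell}$ and $v_\infty=\theta_{2\ell}$ for $i<\ell<j$, the strong maximum principle cannot be propagated across those levels in one stroke, and your conclusion ``$w\equiv 0$ on $(-\infty,t_0]\times(0,\eta)$'' does not follow. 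The paper's proof is organized precisely around this obstruction: it first proves the identity only on the single band $[z_{\ell+1},z_\ell]$ containing the contact point (its claim~\eqref{cla:new_newbis}), working in the region $\omega_\ell$ where \emph{both} $u_\infty$ and $\phi$ lie strictly between $\theta_{2\ell}$ and $\theta_{2\ell+2}$ (another point you elide: even inside one band you must control $v_\infty$, not just $\phi$, to bound $b$), and then runs two inductions across adjacent bands, using the implicit function theorem plus the Hopf lemma to manufacture a fresh interior contact point in the next band each time. You also do not treat the possibility that the contact value itself equals an intermediate $\theta_{2\ell}$, which the paper handles as a separate subcase via Hopf. The paper's own remark states that your simplified argument suffices exactly in the bistable situation where there are no intermediate discontinuities; as a proof of the general statement it has a genuine gap.

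A smaller point: the side discussion about $\overline f(v_\infty)\le f(\theta_{2i}^+)<0$ on $x\ge\eta$ is not needed and slightly misdirected; the Hopf lemma in the paper is applied on a one-sided parabolic neighborhood to the \emph{left} of the contact point, where $u_\infty$ stays strictly inside a band and the equation holds classically, so no weak-solution bookkeeping at the jump is required there.
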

\begin{rem}
In this proposition we only consider situations where the contact point $x_0$ belongs to the extended support $(0,\eta]$ of the derivative of the traveling wave. Indeed, notice that for any $X>0$, the shift $\phi (\cdot - X)$ of the traveling wave is a nonincreasing in space and entire in time solution which satisfies that 
$$\phi (\cdot - X) > \phi \quad \text{ in $(0,X+\eta)$},$$
and $$\phi (\cdot - X) = \phi \quad \text{ in $(-\infty, 0] \cup [X +\eta, +\infty)$}.$$
Therefore a strong maximum principle cannot hold when $x_0 \not \in (0,\eta]$, which suggests that Proposition~\ref{prop:strong} is somewhat optimal.
\end{rem}
\begin{rem}
We point out that, under our multistable assumption, there may exist discontinuity points of $f$ between the two extremal steady states $\theta_{2i}$ and $\theta_{2j}$ of the traveling wave. This difficulty disappears in the bistable case, and therefore in such a situation one may simplify the proof. More specifically, it is enough to prove claim~\eqref{cla:new_newbis} below, and the recursive parts of the arguments can be skipped.
\end{rem}
\begin{proof}We prove both statements simultaneously. Without loss of generality, up to some time shift we assume that $t_0 = 0$. By assumption, we have that
$$u_\infty (0,x_0) = \phi (x_0) < \theta_{2j} ,$$
where either $x_0 \in (0, \eta)$ or $x_0 = \eta$ depending on which statement we consider. 

Then there exists an integer $i \leq \ell < j$ such that $u_\infty (0,x_0) \in [\theta_{2\ell} , \theta_{2\ell +2})$. For any $i < k < j$ we define $z_k \in (0,\eta)$ as the unique point such that
$$\phi (z_k ) = \theta_{2k},$$
and 
$$z_{j} = 0, \qquad z_{i} = \eta .$$ Then $x_0 \in (z_{\ell +1}, z_{\ell}]$, and we claim that
\begin{equation}\label{cla:new_newbis}
\forall t \in \R, \ \forall x \in [ z_{\ell +1} , z_{\ell } ] , \qquad u_\infty (t,x +ct) = \phi (x ).
\end{equation}
We also define, for any $t \in \R$,
$$x_{\theta_{2 \ell +2 },\infty} (t) := \inf \{  x \in \R \mid \ u_\infty (t,x+ct) < \theta_{2 \ell +2} \} \in (z_{\ell+1},\infty) .$$
Notice that $x_{\theta_{2\ell +2},\infty}$ is well-defined and larger than $z_{\ell+1}$ from the fact that $u_\infty$ is nonincreasing in space and $u_\infty (t,z_{\ell+1}+ ct) \geq \phi (z_{\ell+1}) =  \theta_{2 \ell+2}$ for all $t \in \mathbb{R}$. It is also upper semicontinuous in time.

We consider two subcases, and first we assume that $u_\infty (0,x_0) \in (\theta_{2\ell}, \theta_{2\ell +2})$, i.e. it is not a discontinuity point of the reaction. Due to the regularity of $f$, it follows from the standard strong maximum principle that $u_\infty (t,x+ct)$ and $\phi (x)$ coincide in any open and connected neighborhood of $(0,x_0)$ where both solutions remain in this interval. In particular, denoting
$$\omega_\ell :=  \{ (t,x) \in \R^2 \mid x_{\theta_{2\ell+2},\infty} (t) <  x <  z_{\ell} \},$$
then the connected component of its interior $\omega_\ell^\circ$ containing $(0,x_0)$ is such a subdomain. Indeed, it follows from the spatial monotonicity of $u_\infty$ and $\phi$, which together with the fact that $u_\infty (t,x+ct) \geq \phi (x)$, imply that $\theta_{2 \ell} < \phi (x), u (t,x+ct) < \theta_{2\ell +2}$ in $\omega_\ell$. Therefore, applying the usual strong maximum principle in this subdomain, and then due to the continuity of $u$ and $\phi$, one may infer that in fact $x_{\theta_{2\ell+2},\infty} (t) = z_{\ell +1}$ and $u_\infty (t,x+ct) = \phi (x )$ for all $t \in \R$ and $ z_{\ell +1}  \leq x \leq  z_{\ell } $. The claim is proved in that subcase.

The second subcase is when $u_\infty (0,x_0) =  \phi (x_0  ) = \theta_{2\ell }$. In the case of statement~$(i)$, we have $\ell > i$, and from the (strict) monotonicity of $\phi$ we must have
$$x_0 = z_{\ell }.$$
In the case of statement~$(ii)$, then by assumption we have that $x_0 = \eta$ and $\ell = i$, so that again $x_0 = z_{\ell}$. In either cases, we also assume that $u_\infty (t,x +ct) > \phi (x  )$ for all $z_{\ell +1} < x < z_{\ell}$, as otherwise we are back to the previous subcase. Then there exists $\delta >0$ such that $u_\infty (t,x) \in  (\theta_{2\ell }, \theta_{2\ell +2})$ for all $t \in [-\delta,0]$ and $ z_{\ell }- \delta< x  -ct < z_{\ell }$. Therefore it satisfies \eqref{eq:rd} in the classical sense in this subdomain, and we can use the standard Hopf lemma to infer that
$$\partial_x u_\infty (0, x_0) < \phi ' (x_0 ) \leq 0.$$
In the case of statement~$(i)$, we have that $x_0 \in (0,\eta)$ and this contradicts the fact that $u_\infty (0,\cdot) \geq \phi (0, \cdot )$ in $(0,\eta)$. In the case of statement~$(ii)$, it contradicts our assumption that $\partial_x u_\infty (0,x_0) \geq 0$. We conclude that \eqref{cla:new_newbis} holds true.

By \eqref{cla:new_newbis}, we have that $u_\infty (0, z_{\ell+1} ) = \phi (z_{\ell+1}) = \theta_{2\ell +2}$. Repeating the previous step, a straightfoward induction now implies that $u_\infty (t,x +ct) = \phi (x )$ for all $t\in \R$ and $x \in [0, z_{\ell }]$. If $\ell = i$, then $z_{\ell } = \eta$ and this concludes the proof. If $\ell > i$, then another induction will show that $u_\infty (t,x+ct) = \phi (x )$ for all $t \in \R$ and $x\in  [z_{\ell } , \eta]$. Indeed, let us prove that there exists~$\tilde{x}_0$ such that
$$u_\infty (0,\tilde{x}_0) = \phi (\tilde{x}_0 ) \in (\theta_{2\ell-2}, \theta_{2\ell }).$$
First, by \eqref{cla:new_newbis} we already know that
$$u_\infty (0,z_{\ell}) = \phi (z_{\ell} ) \ , \qquad \partial_x u_\infty (0,z_{\ell })  = \phi '  ( z_{\ell } ) < 0.$$
By the implicit function theorem, we get that $x_{\theta_{2 \ell },\infty}$ (defined in the same way as $x_{\theta_{2 \ell+2}}$ above) is $C^1$ on the time interval $[-\delta,0]$, and also that $u_\infty \in ( \theta_{2\ell -2}, \theta_{2\ell  } )$ in $ [-\delta,0] \times (x_{\theta_{2 \ell },\infty} (t),x_{\theta_{2 \ell },\infty} (t) +\delta)$  for some small enough $\delta >0$. We can again apply the standard Hopf lemma, and it follows that there must be $\tilde{x}_0 \in (x_{\theta_{2\ell },\infty} (0), x_{\theta_{2\ell },\infty} (0) + \delta)$ such that $u_\infty (0,\tilde{x}_0) = \phi (\tilde{x}_0 ) \in (\theta_{2 \ell -2}, \theta_{2 \ell})$. We can now repeat (the first subcase of) the proof of claim~\eqref{cla:new_newbis}, and by induction one eventually finds that $u_\infty (t,x+ct) = \phi (x )$ in~$\R \times [0,\eta]$. This concludes the proof of the proposition.
\end{proof}

\section{Convergence to a single traveling wave}\label{sec:single}

In this section we prove Theorem~\ref{theo:stab_single}. This in particular covers the bistable case when $I =1$. For convenience and without loss of generality, we assume that $\theta_i = 0$ and $\theta_j = 1$. We also recall that $\phi$ is a connected and compact traveling wave solution which connects $1$ and $0$, and $c \in \R$ is its speed. It is nonincreasing and, up to some shift, we further assume that
$$\text{spt} (\phi ' ) = ( 0,  \eta),$$
for some $\eta >0$, so that in particular $\phi (x) = 1$ for all $x \leq 0$, and $\phi (x) = 0$ for all $x \geq \eta$.

As far as the initial condition is concerned, we recall that
\begin{equation*}
u(t=0, x) \equiv u_0(x) \in C^{1,\alpha}_{loc}  (\mathbb{R}), \quad \mbox{ where } \quad 0 \leq u_ 0 \leq 1
\end{equation*}
for some $0\le \alpha < 1$. We further assume that
$$\lim_{x \to -\infty} u_0 (x) \in (\theta_{2I-1},1],$$
$$\lim_{x \to +\infty} u_0 (x) \in [0, \theta_{1}),$$
and that it is nonincreasing.

\subsection{A preliminary result}

We start by showing that the solution is identical to the steady states 0 and 1 outside of a bounded interval.
\begin{prop}[Connected and compact support]\label{th:support1}
	If $u(t,x)$ is a weak solution of \eqref{eq:rd} where $u_0$ satisfies \eqref{eq:ini}. Then there is some $T>0$ and two functions $x_0, x_1 : [T,+\infty) \to \mathbb{R}$ such that, for all $t \geq T$,
\begin{eqnarray}\label{ineq_xt}
\forall x > 0, & \qquad & u(t,x_0 (t) +x) = 0 , \quad u( t, x_1 (t) - x) = 1,   \vspace{3pt}\\
\forall x \in (x_1 (t), x_0 (t)),& \qquad &  0 <u (t,x) < 1.  \label{ineq_xt_bis}
\end{eqnarray}
\end{prop}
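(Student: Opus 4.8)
The plan is to sandwich $u$ between two explicit barriers, each obtained by ``patching'' the terrace solution of Theorem~\ref{th:exist} with a spatially homogeneous function that solves the ODE $\dot v=f(v)$ and collapses onto a stable steady state in finite time. It is exactly this finite‑time collapse, impossible for a Lipschitz reaction, that makes the discontinuity assumption \eqref{H3} indispensable here.

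I begin with a monotonicity reduction, using the standing hypotheses of this section: $u_0$ is nonincreasing with $\ell_-:=u_0(-\infty)\in(\theta_{2I-1},1]$ and $\ell_+:=u_0(+\infty)\in[0,\theta_1)$. Comparing $u(t,\cdot)$ with its translates $u(t,\cdot+h)$, $h>0$, through Theorem~\ref{th:weak_comp} shows $u(t,\cdot)$ is nonincreasing for all $t\ge0$; together with the continuity of $u$ (Theorem~\ref{thm:Kim2020}) this means $\{x:u(t,x)=1\}$ and $\{x:u(t,x)=0\}$ are a closed left half‑line and a closed right half‑line, with $0<u(t,\cdot)<1$ strictly between them as soon as both are nonempty. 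Since $0\le u_0$ and the constant $0$ is a weak sub‑solution, also $u\ge0$ everywhere. Hence it suffices to exhibit a time $T$ after which $u$ attains both values $0$ and $1$.

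For the value $0$: fix $\delta\in(\max\{\ell_+,0\},\theta_1)$; by \eqref{H1} and \eqref{H3} there is $\mu>0$ with $f\le-\mu$ on $(0,\delta]$. Let $V(t)$ solve $\dot V=f(V)$, $V(0)=\delta$, continued by $0$ after the (finite, $\le\delta/\mu$) time $t^{*}$ at which it reaches $0$; because $f(0^-)>0>f(0^+)$, the spatially constant map $(t,x)\mapsto V(t)$ is a weak super‑solution of \eqref{eq:rd}. Let $\Phi(\cdot,\cdot;\overrightarrow{\xi})$ be the terrace solution; by Remark~\ref{rem:solution}, adding a common large constant to all components of $\overrightarrow{\xi}$ (which preserves admissibility), we may assume $\Phi(0,\cdot;\overrightarrow{\xi})\equiv1$ on $(-\infty,X_0]$, where $X_0:=\sup\{x:u_0(x)\ge\delta\}\in\mathbb{R}$. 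Then $\overline u:=\max\{V(t),\Phi(t,x;\overrightarrow{\xi})\}$ is again a weak super‑solution (supremum of weak super‑solutions), and $\overline u(0,\cdot)\ge u_0$ on $\mathbb{R}$: for $x\le X_0$, $\overline u(0,x)=1\ge u_0(x)$, while for $x>X_0$, $\overline u(0,x)\ge V(0)=\delta>u_0(x)$. So $u\le\overline u$ for all $t\ge0$ by Theorem~\ref{th:weak_comp}; for $t\ge t^{*}$, $V(t)=0$, hence $u(t,\cdot)\le\Phi(t,\cdot;\overrightarrow{\xi})$, which — the terrace waves being compact — vanishes on a right half‑line with finite left endpoint, so $u(t,\cdot)\equiv0$ there. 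Symmetrically, choosing $\delta'\in(1-\ell_-,1-\theta_{2I-1})$ (nonempty since $\ell_->\theta_{2I-1}$), letting $W(t)$ solve $\dot W=f(W)$, $W(0)=1-\delta'$, continued by $1$ after it reaches $1$ in finite time $t^{**}$ (a weak sub‑solution, using $f(1^-)>0>f(1^+)$), and taking $\underline u:=\min\{W(t),\Phi(t,x;\overrightarrow{\xi}')\}$ with $\overrightarrow{\xi}'$ shifted so that $\Phi(0,\cdot;\overrightarrow{\xi}')\equiv0$ on $[Y_0,\infty)$, $Y_0:=\sup\{x:u_0(x)\ge1-\delta'\}\in\mathbb{R}$, one gets $\underline u(0,\cdot)\le u_0$, hence $u\ge\underline u$, hence $u(t,\cdot)\equiv1$ on a left half‑line for $t\ge t^{**}$. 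Taking $T:=\max\{t^{*},t^{**}\}$ and $x_0(t):=\inf\{x:u(t,x)=0\}$, $x_1(t):=\sup\{x:u(t,x)=1\}$ (both finite for $t\ge T$), the monotonicity/continuity remark above delivers exactly \eqref{ineq_xt}–\eqref{ineq_xt_bis}.

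The conceptual heart, and the only place \eqref{H3} is used, is that a weak super‑solution can be identically equal to a stable steady state on a half‑line yet still lie above a strictly positive tail of $u_0$: no single ``static'' barrier can do both, but $\max\{V(t),\Phi\}$ can, because the ODE layer $V(t)$ holds the barrier above $u_0$ at $t=0$ and then disappears after a finite time, uncovering the genuine zero half‑line of the compact terrace solution. I expect the main technical work to be verifying that these patched objects ($V(t)$, $W(t)$, and the max/min with $\Phi$) are admissible weak sub‑/super‑solutions in the sense of Definition~\ref{def:sol1} — in particular, handling the spatial corners of the max/min and the temporal corner of $V$ at $t^{*}$ — and checking, via Remark~\ref{rem:solution}, that the shifted terrace functions used as barriers are themselves solutions of \eqref{eq:rd}.
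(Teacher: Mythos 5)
There is a genuine gap, and it sits at the very center of your construction: the claim that $\overline u:=\max\{V(t),\Phi(t,x;\overrightarrow{\xi})\}$ is a weak super-solution ``as a supremum of weak super-solutions''. The lattice operations go the other way for parabolic comparison: the \emph{infimum} of super-solutions is a super-solution and the \emph{supremum} of sub-solutions is a sub-solution. Concretely, at each time $t$ with $0<V(t)<1$ the profile $\Phi(t,\cdot)$ crosses the level $V(t)$ transversally at some point $a(t)$, and $\max\{V(t),\Phi\}$ has a convex spatial corner there (left derivative $\partial_x\Phi(a^-)<0$, right derivative $0$). This puts a \emph{positive} Dirac mass into $\partial_{xx}\overline u$ along the curve $x=a(t)$, hence a negative singular measure into $\partial_t\overline u-\partial_{xx}\overline u-f(\overline u)$; equivalently, the boundary term $-\bigl[\partial_x\overline u\bigr]_{x=a(t)}\psi$ produced by integrating $\int\partial_x\overline u\,\partial_x\psi$ by parts has the wrong sign for the super-solution inequality of Definition~\ref{def:sol1}. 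So Theorem~\ref{th:weak_comp} cannot be invoked to get $u\le\overline u$, and heuristically the corner is a heat sink that the true solution overshoots. The dual construction $\underline u=\min\{W(t),\Phi\}$ fails for the same reason (min of sub-solutions creates a concave corner). The fix is not cosmetic: replacing $\max$ by $\min$ restores the super-solution property but destroys the needed initial ordering, since $\min\{V(0),\Phi(0,\cdot)\}=\delta<u_0$ on the left half-line. Your monotonicity reduction and the observation that it suffices to show $u$ attains the values $0$ and $1$ in finite time are both fine; it is only the finite-time attainment step that is broken.

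For comparison, the paper reaches the value $0$ without any patched barrier: it argues by contradiction that if $u>0$ everywhere for all time, then $u$ is a sub-solution of the equation with a Lipschitz reaction $\tilde f\ge f$ extended so that $\tilde f(-\ep)=0$ and $\tilde f<0$ on $(-\ep,\ep)$; comparison with that regular problem forces $u$ to drop below $0$ in the right tail, contradicting $u\ge0$. Once a single zero $u(T_0,\overline x_0)=0$ exists, spatial monotonicity gives a whole zero half-line at time $T_0$, and only \emph{then} is the compact traveling wave $\phi(\cdot-X)$ slipped above $u(T_0,\cdot)$ and propagated forward by Theorem~\ref{th:weak_comp} — no max is ever taken. (The paper also separately verifies $x_0(t)>-\infty$ and $x_1(t)<+\infty$ via the explicit sub-solution $\max\{0,\theta_1-Ae^{\sqrt M(x+2\sqrt M t)}\}$; in your parametrization this is absorbed into the attainment of both values, so that part of your reduction is acceptable.) Your intuition that the discontinuity of $f$ at the stable states is what produces the genuine zero half-line is correct — it is exactly why the wave $\phi$ is compact — but the mechanism must be routed through a single admissible comparison function, not a maximum of two super-solutions.
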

\begin{proof}
	First, we point out that $0 \leq u \le 1$ by the weak comparison principle for equation \eqref{eq:rd}. Also, for any $a>0$, $u(t,x-a)$ is another solution of \eqref{eq:rd} with initial value $u_0(x-a)$. Since~$u_0$ is a nonincreasing function, $u_0(x-a) \ge u_0(x)$ and, applying again the weak comparison principle,
	$$\forall t \ge 0, \quad u(t,x-a) \ge u(t,x).$$
In other words, $u(t,x)$ is a nonincreasing function of the variable $x$ for all $t \geq 0$.
	
	Now, for any $t \geq 0$, we define
$$x_0(t) := \sup\left\{ x\mid u(t,x) \in (0,1) \right\} \in \mathbb{R} \cup \{ \pm \infty \} , $$
$$x_1 (t) := \inf \left\{ x \mid u(t,x)  \in (0,1) \right\} \in \mathbb{R} \cup \{ \pm \infty \}.$$
In particular, $u(t,x_0 (t) +x) = 0$ and $u (t,x_1 (t) - x) = 1$  for all $t \geq 0$ and $x>0$. Due to the spatial monotonicity of $u$, it is also straightforward that $u (t ,\cdot) \in (0,1)$ in the interval $(x_1 (t), x_0 (t))$. It remains to check that $x_0$ and $x_1$ are real-valued functions.

Since $f (\theta_{1}) = 0$ and $f$ is Lipschitz-continuous on $(0,\theta_{1}]$, we have that
$$\forall s\in (0,\theta_{1}], \qquad  f(s) \geq - M (\theta_{1} - s),$$
for some $M>0$. It follows that the function
$$\underline{u} (t,x) :=  \max \{0, \theta_{1} - A e^{\sqrt{M} (x + 2 \sqrt{M} t)} \}, \quad \mbox{ with } \ A >0, $$
satisfies, whenever it is positive,
\begin{eqnarray*}
\partial_t \underline{u} - \partial_{xx} \underline{u} - f(\underline{u}) & \leq & 0.
\end{eqnarray*}
In particular, $\underline{u}$ is a weak sub-solution of \eqref{eq:rd}. Taking $A$ large enough so that $\underline{u} (0,\cdot) \leq u_0$, we infer that $x_0 (t) > -\infty$ for all $t \geq 0$. One can similarly check that $x_1 (t) < + \infty$ for all $t >0$.

Lastly, we check that $x_0 (t) < +\infty$ and $x_1 (t) > - \infty$. Due to our assumption on the initial data, these inequalities may actually be false at $t =0$. Yet we show here that they hold after some finite transient time.

Let us first find some time $T_0 \geq 0$ and $\overline{x}_0 \in \mathbb{R}$ such that $u(T_0 ,\overline{x}_0)=0$. Assume that such a point $(T_0 , \overline{x}_0)$ does not exist, or equivalently that $u$ is positive for all $t >0$ and $x \in \R$. Now define $\tilde{f}$ a Lipschitz-continuous function such that $\tilde{f} \geq f$ in $(0,1]$, as well as $\tilde{f} (-\varepsilon) = 0$ and $\tilde{f} (s) < 0$ for some $\varepsilon >0$ and all $s \in (-\varepsilon, \varepsilon)$. Since it is positive, the function $u$ also satisfies (in the weak sense)
$$\partial_t u \leq \partial_{xx} u + \tilde{f} (u),$$
By the comparison principle, one may infer that $\lim_{t\to +\infty} \lim_{x \to +\infty} u(t,x) = - \varepsilon$, a contradiction.

Thus there exist $T_0 \geq 0$ and $\overline{x}_0 \in \mathbb{R}$ such that $u(T_0, \overline{x}_0)=0$. By the spatial monotonicity of the solution, we also have that $u(T_0,x)=0$ for all $x \geq \overline{x}_0$, and in particular $x_0 (T_0) \leq \overline{x}_0 < +\infty$. It also follows that there exists some spatial shift $X$ such that
$$u (T_0 , x) \leq \phi (x - X),$$
for all $x \in \R$, where $\phi$ is the connected and compact traveling wave solution connecting $0$ and~$1$. Applying the comparison principle, we deduce that $x_0 (t) \leq X + ct < + \infty$ for all $t \geq T_0$. The argument for $x_1 (t) > -\infty$ is similar and we omit the details. The proposition is proved.
\end{proof}

\subsection{Proof of Theorem~\ref{theo:stab_single} }

We now turn to the proof of our first convergence result. The strategy is the same as in the more classical case of a Lipshitz-continuous reaction term. The first step is to trap the solution between two shifts of the traveling wave~$\phi$, and after that we will perform a sliding argument, which will be made possible by Proposition~\ref{prop:strong}.

First, it immediately follows from Proposition~\ref{th:support1} that the solution can be trapped between two shifts of the traveling wave. Together with the comparison principle, this leads to the following:
\begin{prop} \label{prop:trap1}
If $u(t,x)$ is a weak solution of \eqref{eq:rd}--\eqref{eq:ini}, there exists $T >0$ and $X >0$ such that
$$ \phi (x -c t + X) \leq u (t,x) \leq \phi (x -c t - X),$$
for all $t \geq T$ and $x \in \R$.
\end{prop}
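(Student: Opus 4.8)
The plan is to obtain Proposition~\ref{prop:trap1} as a short consequence of Proposition~\ref{th:support1} together with the weak comparison principle of Theorem~\ref{th:weak_comp}. The key observation is that the compactness of $\phi$ makes the trapping possible with a single finite shift: the profile $\phi$ is nonincreasing with $\phi\equiv 1$ on $(-\infty,0]$ and $\phi\equiv 0$ on $[\eta,+\infty)$, and by Proposition~\ref{th:support1} the solution $u(t,\cdot)$ has exactly the same half-line structure after some transient time $T$, being $\equiv 1$ on $(-\infty,x_1(t)]$ and $\equiv 0$ on $[x_0(t),+\infty)$ with $x_0,x_1$ real-valued. So it suffices to (i) sandwich $u(T,\cdot)$ between two shifts of $\phi$ at the single time slice $t=T$, and (ii) propagate this ordering to all later times using that shifted traveling waves are (weak) solutions of \eqref{eq:rd}.

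First I would fix the time $T$ furnished by Proposition~\ref{th:support1} and work at $t=T$. Using $0\le u\le 1$, the plateau structure of $u(T,\cdot)$ to the left of $x_1(T)$ and to the right of $x_0(T)$, and the plateau structure of $\phi$, any choice
$$X\ \ge\ \max\bigl\{\,x_0(T)-cT,\ \ cT+\eta-x_1(T)\,\bigr\}$$
works; this is finite precisely because $x_0(T),x_1(T)\in\R$, which is the content of Proposition~\ref{th:support1}. Indeed, with such an $X$ the function $\phi(\cdot-cT-X)$ equals $1$ on $(-\infty,x_0(T)]$ and is $\ge 0$ everywhere, hence dominates $u(T,\cdot)$; symmetrically $\phi(\cdot-cT+X)$ vanishes on $[x_1(T),+\infty)$ and is $\le 1$ everywhere, hence is dominated by $u(T,\cdot)$. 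This yields
$$\phi(x-cT+X)\ \le\ u(T,x)\ \le\ \phi(x-cT-X)\qquad\text{for all }x\in\R.$$

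To conclude, I would invoke the comparison principle. The functions $(t,x)\mapsto\phi(x-ct\pm X)$ are traveling wave solutions, hence weak solutions of \eqref{eq:rd} in the sense of Definition~\ref{def:sol1}, and in particular both weak sub- and super-solutions; moreover $u(T,\cdot)\in C^{1+\alpha}_{loc}(\R)\subset H^1_{loc}(\R)$ by Theorem~\ref{thm:Kim2020}, so the solution restarted at time $T$, namely $u(\cdot+T,\cdot)$, is again admissible. Applying Theorem~\ref{th:weak_comp} to $u(\cdot+T,\cdot)$ against the correspondingly time-shifted traveling waves propagates the ordering above and gives $\phi(x-ct+X)\le u(t,x)\le\phi(x-ct-X)$ for all $t\ge T$ and $x\in\R$, which is the claim. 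I do not expect a genuine obstacle here; the only point worth stressing is that the single-shift trapping relies essentially on the compactness of $\phi$ (so that one shift makes $\phi$ equal to $1$ on a half-line containing $[x_1(T),x_0(T)]$, and another makes it equal to $0$ on a half-line to the right of $x_1(T)$) together with Proposition~\ref{th:support1} (so that $u(T,\cdot)$ itself has this half-line structure) — without compactness this elementary argument would fail.
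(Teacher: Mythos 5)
Your argument is correct and is exactly the route the paper takes: the paper gives no separate proof of Proposition~\ref{prop:trap1}, stating only that it ``immediately follows'' from Proposition~\ref{th:support1} (which supplies the half-line plateau structure of $u(T,\cdot)$ with finite $x_0(T),x_1(T)$) together with the weak comparison principle applied to the shifted traveling waves. Your explicit choice of $X$ and the propagation step fill in precisely the details the authors left implicit.
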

In particular, any large-time limit of the solution in the moving frame with speed $c$ must also be trapped between two shifts of~$\phi$. As in the more standard case of a Lipschitz-continuous reaction, the existence of such large-time limit is ensured by some parabolic estimates, which we established here in Lemma~\ref{lemtn}.
\begin{prop}\label{prop:trap2}
	Let $u(t,x)$ be a weak solution of \eqref{eq:rd}-\eqref{eq:ini}, and $(t_n)_{n \in \mathbb{N}}$ be any time sequence with $t_n \to +\infty$ as $n \to +\infty$. Then, up to extraction of a subsequence, $u(t + t_n ,x+ c t_n)$ converges locally uniformly to some nonnegative and bounded function~$u_\infty$, which is also an entire in time solution of \eqref{eq:rd}. In addition, $\partial_x u(t+t_n,x+ct_n)$ converges locally uniformly to a nonpositive function $\partial_x u_\infty$.
\end{prop}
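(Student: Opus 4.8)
The plan is to produce $u_\infty$ by a compactness argument resting on the uniform parabolic estimates of Lemma~\ref{lemtn}, and then to check that the limit is again an entire in time weak solution by passing to the limit in the rewritten weak formulation of the Remark following Definition~\ref{def:sol1}. Write $u_n(t,x):=u(t+t_n,x+ct_n)$; for any compact $K\subset\R\times\R$ this is well defined on $K$ as soon as $n$ is large enough that $t+t_n>1$ on $K$. Since $u$ is a weak solution of~\eqref{eq:rd} on $\R_+\times\R$, each $u_n$ is a weak solution on $(1-t_n,\infty)\times\R$, so Lemma~\ref{lemtn}, applied on cylinders $Q_{T,R}(t_0,x_0)$ with constants independent of the base point, bounds $\|u_n\|_{W^{1,2}_2(Q_{T,R})}$ and $\|u_n\|_{C^{(1+\beta)/2,1+\beta}(Q_{T,R})}$ uniformly in $n$. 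By Arzelà--Ascoli and a diagonal extraction over an exhaustion of $\R\times\R$ by such cylinders, some subsequence of $(u_n)$ converges in $C^{(1+\beta')/2,1+\beta'}_{loc}$ for every $\beta'<\beta$ to a function $u_\infty$; in particular $u_n\to u_\infty$ and $\partial_x u_n\to\partial_x u_\infty$ locally uniformly, while the $W^{1,2}_2$ bound together with $u_n\to u_\infty$ in $\mathcal D'$ gives $\partial_t u_n\rightharpoonup\partial_t u_\infty$ weakly in $L^2_{loc}$. Passing to the limit in $0\le u_n\le1$ gives $0\le u_\infty\le1$; since $u(t,\cdot)$ is nonincreasing (proof of Proposition~\ref{th:support1}), $\partial_x u_n\le0$ and hence $\partial_x u_\infty\le0$; and the space--time regularity required in Definition~\ref{def:sol1} is inherited from the (base-point independent) bounds together with $0\le u_\infty\le1$.

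It remains to show $u_\infty$ is an entire in time weak solution. Fix $\tau\in\R$ and a nonnegative $\psi\in C^\infty_c(\R_+\times\R)$. For $n$ large $\spt\psi$ lies in the time region where $u_n(\cdot+\tau,\cdot)$ is a weak solution, so
\begin{equation*}
\int\int \overline f\big(u_n(s+\tau,y)\big)\,\psi \ \ge\ \int\int \Big(\partial_t u_n(s+\tau,y)\,\psi+\partial_x u_n(s+\tau,y)\,\partial_x\psi\Big),
\end{equation*}
and symmetrically with $\underline f$ and the reversed inequality (all integrals over $\R_+\times\R$ in the variables $(s,y)$). By the convergences above the right-hand sides converge to the corresponding expressions for $u_\infty$. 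The crucial point is the left-hand sides: assumption~\eqref{H3} forces $\overline f$ to be upper semicontinuous and $\underline f$ lower semicontinuous on all of $\R$. Indeed, away from the $\theta_{2i}$'s both equal the continuous $f$, and at $\theta_{2i}$ one has $\limsup_{s\to\theta_{2i}}\overline f(s)=\max\{f(\theta_{2i}^-),f(\theta_{2i}^+)\}=f(\theta_{2i}^-)=\overline f(\theta_{2i})$, and likewise $\liminf_{s\to\theta_{2i}}\underline f(s)=\underline f(\theta_{2i})$. Hence $\limsup_n\overline f(u_n)\le\overline f(u_\infty)$ and $\liminf_n\underline f(u_n)\ge\underline f(u_\infty)$ pointwise; since these are bounded by $\|f\|_{L^\infty}$ and $\psi$ has compact support, the reverse Fatou lemma gives $\limsup_n\int\int\overline f(u_n)\psi\le\int\int\overline f(u_\infty)\psi$ and Fatou's lemma gives $\liminf_n\int\int\underline f(u_n)\psi\ge\int\int\underline f(u_\infty)\psi$. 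Taking $\limsup$ (resp.\ $\liminf$) in the two displayed inequalities yields the two weak inequalities of Definition~\ref{def:sol1} for $u_\infty(\cdot+\tau,\cdot)$; as $\tau$ was arbitrary, $u_\infty$ is an entire in time weak solution.

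The step I expect to be the real obstacle is precisely this last passage to the limit through the discontinuous reaction: since $\overline f$ and $\underline f$ are not continuous, one cannot identify $\lim_n\overline f(u_n)$ with $\overline f(u_\infty)$, and the point that unlocks the argument is that the one-sided limits of $f$ at each stable state $\theta_{2i}$ have the signs prescribed by~\eqref{H3}, which makes $\overline f$ u.s.c.\ and $\underline f$ l.s.c.; everything else (parabolic compactness, the a priori bounds, the spatial monotonicity) is routine. If one preferred to avoid this semicontinuity bookkeeping, an alternative is to run the same compactness argument on the regularized solutions $u^\varepsilon$ of~\eqref{continuous problem}, obtain an entire solution of the smooth equation, and then let $\varepsilon\to0$ exploiting the monotone convergence $u^\varepsilon\downarrow u$; but the direct route above appears shorter.
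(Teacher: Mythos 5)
Your proposal is correct and follows essentially the same route as the paper: compactness and local $C^{(1+\beta)/2,1+\beta}$ convergence from the uniform estimates of Lemma~\ref{lemtn}, verification of the regularity required by Definition~\ref{def:sol1}, and then passage to the limit in the two weak inequalities using the one-sided semicontinuity of $\overline f$ and $\underline f$ at the discontinuity points. Your packaging of that last step via pointwise semicontinuity plus (reverse) Fatou is a slightly cleaner version of the paper's domain-splitting into $\tilde Q_\varepsilon$, $\widehat Q$ and a small remainder, and you correctly identify $\overline f$ as \emph{upper} semicontinuous (the paper's text says ``lower'', which is a slip of orientation), so no changes are needed.
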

\begin{proof}

Applying Lemma~\ref{lemtn}, we get that up to extraction of a subsequence, $u(t+t_n,x+ct_n) \to u_\infty(t,x)$ in $C^{(1+\beta)/2, 1+\beta}_{loc}(\R_+ \times \R)$ for every $0<\beta < 1 $, with
	\begin{equation*}
	u_\infty \in C^{(1+\beta)/2, 1+\beta}_{loc}(\R_+ \times \R).
	\end{equation*}
	In particular, $u(t+t_n,x+ct_n)$ converges uniformly to $u_\infty$ as $n \to +\infty$ on any compact domain $(0,T) \times D$ with $T >0$ and $D \subset \mathbb{R}$. Since~$u$ is nonnegative and bounded by Proposition~\ref{th:support1}, the function~$u_\infty$ is also nonnegative and bounded. For the same sequence $t_n$, we also have that $\partial_x u (\cdot + t_n, \cdot + c t_n)$ converges locally uniformly to $\partial_x u_\infty$.
	
	The remaining part is to show that $u_\infty$ is a weak solution of~\eqref{eq:rd} in the sense of Definition~\ref{def:sol1}. By the above convergence, we clearly have that $u_\infty \in C ( [ 0,T); L^2 (D))$, and using again Lemma~\ref{lemtn}, we also get that $u_\infty \in H^1 ((0,T)\times D)$. Hence
	$$u_\infty \in C ( [ 0,T); L^2 (D)) \cap H^1 ((0,T)\times D).$$
Furthermore, we also have from the boundedness and the Hölder convergence that $t \mapsto \int e^{-|x|} u_\infty^2 (t,x) dx$ is continuous, and for any $T>0$,
	\begin{eqnarray}\nonumber 
	&&\sup_{0< t < T} \int_\R e^{-|x|}( |u_\infty|^2 + |\partial_x u_\infty|^2 ) dx \\ \nonumber &=& \sup_{0<t<T} \lim_{t_n \to \infty} \Big(\|e^{-{|x|\over 2}} u(t+t_n,x) \|_{L^2(\R)}^2 + \|e^{-{|x|\over 2}} \partial_x u(t+t_n,x) \|_{L^2(\R)}^2 \Big) \\ \nonumber &\le&\sup_{t>0} \Big(\|e^{-{|x|\over 2}} u(t,x) \|_{L^2(\R)}^2 + \|e^{-{|x|\over 2}} \partial_x u(t,x) \|_{L^2(\R)}^2 \Big)<\infty.
	\end{eqnarray}
Here in the last inequality we used again Lemma~\ref{lemtn}.
	
	Finally, we need to show that $u_\infty$ satisfy the inequalities \eqref{def:super1} and \eqref{def:sub1}. We will only show the super-solution case, \eqref{def:super1}. For any nonnegative test function $\psi \in C_c^\infty (\mathbb{R}_+ \times \mathbb{R})$ and the sequence $t_n$ which makes $u(t+t_n,x+ct_n) \to u_\infty(t,x)$, we have 
	\begin{equation*} 
	I_n := \int \int_{\mathbb{R}_+ \times \mathbb{R}} - u(t+t_n,x+ct_n) \, \partial_t \psi + \partial_x u(t+t_n,x+ct_n) \partial_x \psi -  \overline{f} (u) \psi \, dx dt - \int_\R u(t_n,x+ct_n)\psi dx \leq 0. 
	\end{equation*}
	Now we consider
	\begin{equation*} 
	I := \int \int_{\mathbb{R}_+ \times \mathbb{R}} - u_\infty(t,x) \, \partial_t \psi + \partial_x u_\infty(t,x) \partial_x \psi -  \overline{f} (u_\infty) \psi \, dx dt - \int_\R u_\infty(0,x)\psi dx.
	\end{equation*}
	Our goal is to show that $I \le 0$ and it can be obtained by first observing that 
	$$ I = I-I_n + I_n  \le I-I_n.$$
	Since the above inequality is satisfied for all $n$, we only need to show that $\lim_{n\to\infty} I_n-I = 0$. 
	
	Let $Q_{T,R} = (0,T) \times (-R,R)$ include the compact support of $\psi$. For convenience, denote $u_n(t,x) = u(t+t_n,x+ct_n)$. Then for all $n$,
	\begin{eqnarray*}
		I-I_n  &=& \int_0^T \int_{-R}^R - (u_\infty-u_n) \, \partial_t \psi + (\partial_x u_\infty - \partial_x u_n) \, \partial_x \psi -  (\overline{f} (u_\infty)-\overline{f} (u_n)) \psi \, dx dt \\ &&\quad - \int_\R (u_\infty(0,x)-u_n(0,x)) \psi(0,x) dx \\
		&\le& C \left(  \sup_{Q_{T,R}} |u_\infty-u_n| + \sup_{Q_{T,R}} |\partial_x u_\infty-\partial_x u_n| \right) - \int_0^T \int_{-R}^R(\overline{f} (u_\infty)-\overline{f} (u_n)) \psi \, dx dt ,
	\end{eqnarray*}
for some $C>0$. From the convergence of $u_n$ to $u$ in $C^{(1+\beta)/2, 1+\beta}_{loc}$, 
$$\lim_{n\to\infty} \Big[\sup_{Q_{T,R}} |u_\infty-u_n| + \sup_{Q_{T,R}} |\partial_x u_\infty-\partial_x u_n|\Big] = 0.$$
Next, take $\varepsilon >0$ arbitrarily small and denote by $\tilde{Q}_\varepsilon \subset Q_{T,R}$ the subdomain where $u_\infty \in \bigcup_i (\theta_{2i}+\varepsilon,\theta_{2i+2}- \varepsilon) $; i.e. it is away from the discontinuity points of $f$. Thanks to the definition of $\overline{f}$ and the Lipschitz-continuity of $f$ outside of its discontinuity points, we get that
$$\sup_{\tilde{Q}_\varepsilon}|\overline{f} (u_\infty)-\overline{f} (u_n) | \leq K \sup_{Q_{T,R}} | u_\infty - u_n|,$$
for some $K>0$.

Now consider $\widehat{Q} \subset Q_{T,R}$ the subdomain where $u_\infty \in \{ \theta_{2i} \, | \ 0 \leq i \leq I \}$. By the lower semi-continuity of $\overline{f}$, we have
$$\lim_{n \to \infty} \sup_{\widehat{Q}}\overline{f} (u_\infty) - \overline{f} (u_n) \leq 0.$$
Finally, noticing that the Lebesgue measure of $Q_{T,R} \setminus (\widehat{Q} \cup \tilde{Q}_\varepsilon)$ goes to 0 as $\varepsilon \to 0$, we conclude that
$$\limsup_{n \to \infty} \int_0^T \int_{-R}^R(\overline{f} (u_\infty)-\overline{f} (u_n)) \psi \, dx dt \leq 0.$$
Thus, we have $\lim_{n\to\infty} I-I_n \le 0$ and $u_\infty$ satisfies \eqref{def:super1}.

	For the sub-solution case, we can obtain $I \ge I-I_n \ge 0$ similarly and we can deduce that~$u_\infty$ also satisfies~\eqref{def:sub1}. Thus, $u_\infty$ is a weak solution in the sense of Definition~\ref{def:sol1} and the proof is completed.
\end{proof}

We can now turn to the second part of the proof, which is to perform some sliding argument to find that the large-time limit of the solution in the moving frame with speed $c$ is unique and coincides with a shift of the traveling wave. To set up this argument, we first define
$$\Omega_{c} (u) := \{ u_\infty \, | \ \ \exists t_n \to +\infty, \ u(t+ t_n, x + c t_n ) \to u_\infty (t,x) \mbox{ loc. unif. as } n \to +\infty \},$$
the set of limits of the solution in the moving frame with speed $c$. Proposition~\ref{prop:trap2} ensures that this set is not empty. It also ensures that
$$X_- := \inf \{ X \, | \ \  \forall u_\infty \in \Omega_{c} (u),  \  \forall (t,x) \in \R^2, \ \phi (x-c t + X) \leq u_\infty (t,x) \} ,$$
and
$$X_+ := \inf \{ X \, | \ \  \forall u_\infty \in \Omega_{c} (u),  \  \forall (t,x) \in \R^2, \ \phi (x-c t - X) \geq u_\infty (t,x) \} ,$$
are well-defined real numbers. Observe that, from a simple continuity argument,
\begin{equation}\label{uinfty_trapped}
\forall u_\infty \in \Omega_{c} (u), \ \forall (t,x) \in \R^2, \ \phi (x-c t + X_-) \leq u_\infty (t,x) \leq \phi (x-ct - X_+).
\end{equation}
Our goal now is to prove that $X_- = -X_+$, so that
$$\Omega_{c} (u) = \{ (t,x) \mapsto \phi (x- c t + X_-) \}.$$

We start with the following lemma:
\begin{lem}\label{lem:stab_new1}
Assume that there exist $X \in \R$ and $u_\infty \in \Omega_c (u)$ such that
$$ u_\infty ( 0,x) \geq \phi (x + X),$$
for all $x \in \R$.

Then any $\tilde{u}_\infty \in \Omega_c (u)$ also satisfies
$$\tilde{u}_\infty (t,x) \geq \phi (x   - ct + X),$$
for all $(t,x)\in \R^2$.
\end{lem}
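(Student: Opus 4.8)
The plan is to exploit the fact that $u_\infty \in \Omega_c(u)$ is itself obtained as a locally uniform limit $u(t+t_n, x+ct_n) \to u_\infty(t,x)$ along some sequence $t_n \to +\infty$, and then to transfer the ordering $u_\infty(0,\cdot) \ge \phi(\cdot + X)$ first to the actual solution $u$ at large times, and from there to every element of $\Omega_c(u)$. The mechanism that makes this work is the (weak) comparison principle of Theorem~\ref{th:weak_comp}, together with the fact that $(t,x)\mapsto \phi(x-ct+X)$ is itself a solution of \eqref{eq:rd}.

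First I would fix $\varepsilon > 0$ and use the local uniform convergence on, say, $\{0\}\times[-R,R]$ together with the compactness of $\mathrm{spt}(\phi')$ (so that $\phi(\cdot+X)$ reaches its limiting values $0$ and $1$ on a bounded set) to conclude that, for $n$ large enough,
$$u(t_n, x+ct_n) \ge \phi(x+X) - \varepsilon \quad \text{for all } x \in \R.$$
Strictly speaking one must handle the tails $x \to \pm\infty$ separately: there $\phi(\cdot+X)$ equals $0$ or $1$, and $u(t_n,\cdot+ct_n)$ is $\ge 0$ automatically on one side, while on the side where $\phi(\cdot+X)=1$ one invokes Proposition~\ref{th:support1}, which gives $u(t_n,\cdot+ct_n)=1$ for $x$ below some finite point; so the inequality up to $\varepsilon$ holds globally. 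Then $\phi(x+X)-\varepsilon$, or rather a slightly shifted genuine subsolution built from it, can be used as initial datum at time $t_n$: by the comparison principle, $u(t+t_n, x+ct_n) \ge $ (the evolution from that datum) for all $t\ge 0$. As $n\to\infty$ with the datum converging (in the appropriate sense) to $\phi(\cdot+X)-\varepsilon$, and then letting $\varepsilon\to 0$, we recover
$$u(t+t_n, x+ct_n) \ge \phi(x - ct + X) - o(1).$$

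Now take any $\tilde u_\infty \in \Omega_c(u)$, arising along a sequence $s_m \to +\infty$. For each fixed large $m$, apply the previous paragraph's conclusion with a time $t_n$ chosen so that $t_n \le s_m$ and $s_m - t_n$ is as large as we like (possible since the $t_n$ are unbounded), or more simply: since the inequality $u(t+t_n,x+ct_n)\ge \phi(x-ct+X)-\varepsilon_n$ holds for \emph{all} $t\ge 0$ with $\varepsilon_n\to 0$, evaluate it at $t = s_m - t_n$, giving $u(s_m, x + c s_m) \ge \phi(x - c(s_m - t_n) + X) - \varepsilon_n$ — but this has an unwanted shift, so it is cleaner to instead argue as follows: the inequality $u(t', x') \ge \phi(x' - c t' + X) - \varepsilon_n$ is valid for all $t' \ge t_n$, $x'\in\R$, hence for $t_n \le t'$ it passes to the limit along $t' = t + s_m$, $x' = x + c s_m$ to give $\tilde u_\infty(t,x) \ge \phi(x - ct + X)$. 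Letting $n$ (equivalently $\varepsilon_n$) tend to $0$ closes the argument.

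The main obstacle is the bookkeeping at spatial infinity: unlike in the smooth case one cannot simply compare with a pointwise-ordered subsolution on all of $\R$ without worrying about the behavior of $\phi(\cdot+X)$ and of $u$ at $\pm\infty$, and one must ensure the quantity being compared is a legitimate weak subsolution in the sense of Definition~\ref{def:sol1} (in particular that the $\varepsilon$-perturbation does not break the sign condition on $\underline f$ near the stable steady states). Using Proposition~\ref{th:support1} to pin $u$ to the steady state $1$ on a left half-line removes the delicate side, and on the right side the trivial bound $u \ge 0 = \phi(\cdot+X)$ (for $x$ large) suffices; the only genuinely careful point is that the comparison must be set up on the correct time interval $[t_n, +\infty)$ and then the limit $n\to\infty$ taken, which is exactly the standard two-step limiting scheme and presents no real difficulty.
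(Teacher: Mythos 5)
Your overall two-step scheme (transfer the ordering from $u_\infty$ back to $u$ at a large time $t_n$, push it forward by comparison, then pass to the limit along any other sequence) is the same skeleton as the paper's proof, and your handling of the tails and of the final limiting step is essentially fine. But the entire weight of the argument rests on the phrase ``a slightly shifted genuine subsolution built from it,'' which you never construct, and this is precisely where the proof is nontrivial. The function $\phi(\cdot+X)-\varepsilon$ is not a sub-solution, and upgrading it to one in this setting means building a Fife--McLeod-type object $\phi(x-ct+X+\xi(t))-q(t)$ and verifying the inequality \emph{in the weak sense of Definition~\ref{def:sol1}} (with $\underline f$) across every level $\theta_{2i}$ where $f$ jumps, including the intermediate ones that $\phi$ may cross; dismissing this as ``no real difficulty'' skips the step the lemma is actually about.

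The paper avoids the vertical perturbation altogether: it compares $u$ with the exact solution $\phi(\cdot-ct+X+\varepsilon)$ (a horizontal shift), so no sub-solution needs to be manufactured. The price is that on the left half-line, where $\phi(\cdot+X+\varepsilon)\equiv 1$, locally uniform convergence only yields $u\geq 1-\delta$, not $u\geq 1$, so the ordering cannot be initialized there by approximation. The paper's key step --- entirely absent from your proposal --- is to show that within a unit time window the interface $x_1(t)$ must actually reach $ct+ct_n-X-\varepsilon/2$, i.e.\ that $u$ \emph{equals} $1$ on the relevant half-line at some time $\tau+t_n$. This is proved by contradiction: if $u$ stayed strictly below $1$ on the interval $I=[-X-\varepsilon/2,-X]$ for all $t\in[0,1]$, integrating the equation over $I$ and using $f>f^*>0$ just below $1$ (a consequence of the discontinuity $f(1^-)>0$) forces $\int_I u$ to grow and exceed $|I|$, which is impossible since $u\leq 1$. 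This finite-time attainment of the stable state is the mechanism specific to the discontinuous framework, and either it or a fully verified weak sub-solution must appear for the proof to close; as written, your argument has a genuine gap at exactly this point.
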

\begin{proof}
We fix any $\varepsilon >0$. We claim that there exists $T >0$ such that
\begin{equation}\label{lem_newclaim}
u (T, x + c T ) \geq \phi (x  +X +\varepsilon).
\end{equation}
First, by the comparison principle, we have that $u_\infty (t,x) \geq \phi (x - ct + X)$ for all $t \geq 0$. Due to the strict monotonicity of $\phi$ in $(0, \eta)$,
$$u_\infty (t,x) > \phi (x - ct +X  + \varepsilon),$$
for all $t \geq 0$ and $x \in [ ct  - X - \varepsilon/2,\eta+ct - X - \varepsilon ]$.

On the other hand, by the definition of $\Omega_c (u)$, there exists a time sequence $t_n$ such that
$$u (t+ t_n, x + ct_n) \to u_\infty (t,x),$$
as $n \to +\infty$, where the convergence is to be understood in the locally uniform sense. In particular, for any $n$ large enough we have that
$$u(t + t_n, x + ct + c t_n) \geq \phi (  x  +X + \varepsilon),$$
for all $t \in [0,1]$ and $x \in  [  - X  - \varepsilon/2 , \eta - X -  \varepsilon]$. Moroever, for any $t \in [0,1]$ and $x \geq \eta - X - \varepsilon$, then
$$u (t+ t_n , x + ct + ct_n) \geq 0 = \phi (x + X + \varepsilon).$$
Lastly, we check that there exists $\tau \in [0,1]$ such that
$$x_1 (t_n+  \tau ) \geq c \tau + c t_n  - X - \frac{\varepsilon}{2}.$$
If true, then we conclude that $u  (\tau+ t_n  , x +c \tau + c t_n) \geq \phi (x + X + \varepsilon)$ on the whole real line, and claim~\eqref{lem_newclaim} holds true with $T = \tau+t_n $. Otherwise, we would have
$$x_1 (t + t_n) <  c t + c t_n - X - \frac{\varepsilon}{2},$$
for all $t \in [0,1]$. Then, recalling that $u (t + t_n,  c t + ct_n - X  ) \to  u_\infty (t, ct - X) \geq \phi (0 )= 1$ locally uniformly as $n \to +\infty$, we get up to extracting another subsequence that
$$1 - \frac{1}{n} < u (t + t_n, x + ct + ct_n )< 1,$$
for all $t \in [0,1]$ and $x \in [ - X - \varepsilon/2,  - X]$. Define $I = [ - X - \varepsilon/2,  - X]$ which is a compact interval.  In particular, $\tilde{u}_n (t,x) := u (t + t_n, x + ct + ct_n) $ solves
$$\partial_t  \tilde{u}_n = \partial_{xx} \tilde{u}_n + c \partial_x \tilde{u}_n + \tilde{f} (\tilde{u}_n)$$
on $[0,1] \times I$, where $\tilde{f}$ is any Lipschitz-continuous which coincides with $f$ on $(1 -1/n , 1)$. Integrating the equation on $I$, we get
$$\partial_t \int_I \tilde{u}_n dx =  \partial_x \tilde{u}_n \mid_{I} + c \tilde{u}_n \mid_I + \int_I \tilde{f}(\tilde{u}_n) dx.
$$
Recall that, from the proof of Proposition~\ref{prop:trap2}, we also have that $\partial_x u(t+t_n, x+ct+ct_n)$ converges locally uniformly as $n \to \infty$ to $\partial_x u_\infty (t,x+ct)$, which is 0 in $[0,1] \times I$. Moreover, there exists $f^* >0$ such that $\tilde{f} (s)  = f(s) > f^*$ for any $s \in (1 - \frac{1}{n}, 1)$. Hence
$$\liminf_{n \to \infty} \partial_t \int_I \tilde{u}_n  dx \ge \frac{\varepsilon}{2} f^*,$$
and
$$\liminf_{n \to \infty} \int_I \tilde{u}_n (t=1) \ge \frac{\varepsilon}{2} \left( f^* + 1  \right).$$
This leads to a contradiction as such solution should exceed 1 at some point when $t = 1$. 
Thus, we have that there exists $\tau \in [0,1]$ such that
$$x_1 (t_n+  \tau ) \geq c \tau + c t_n  - X - \frac{\varepsilon}{2}.$$
We conclude that \eqref{lem_newclaim} holds for some $T>0$, and that $u(t, x+ ct) \geq \phi (x + X+ \varepsilon)$ for all $t \geq T$ by the weak comparison principle. It follows that $\tilde{u}_\infty (t,x) \geq \phi (x-ct + X + \varepsilon)$ in~$\R^2$ for any~$\tilde{u}_\infty \in \Omega_c (u)$ and, since $\varepsilon$ can be chosen arbitrarily small, we reach the wanted conclusion.
\end{proof}
The following result is an immediate consequence of Lemma~\ref{lem:stab_new1}:
\begin{prop}\label{prop:stab_11}
Assume that $(t,x) \mapsto \phi (x-c t + X_1)$ and $(t,x) \mapsto \phi (x-c t + X_2)$ belong to~$\Omega_{c} (u)$. Then $X_1= X_2$.
\end{prop}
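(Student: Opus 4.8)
The plan is to apply Lemma~\ref{lem:stab_new1} twice, in a symmetric fashion, so that the present proposition becomes a short corollary of it. First, suppose $(t,x)\mapsto\phi(x-ct+X_1)$ belongs to $\Omega_c(u)$. Take this particular $u_\infty$ and observe that trivially $u_\infty(0,x)=\phi(x+X_1)\geq\phi(x+X_1)$ for all $x\in\R$, so the hypothesis of Lemma~\ref{lem:stab_new1} holds with $X=X_1$. The lemma then guarantees that \emph{every} $\tilde{u}_\infty\in\Omega_c(u)$ satisfies $\tilde{u}_\infty(t,x)\geq\phi(x-ct+X_1)$ on $\R^2$. Applying this with $\tilde{u}_\infty(t,x)=\phi(x-ct+X_2)$, which by assumption also lies in $\Omega_c(u)$, gives $\phi(x-ct+X_2)\geq\phi(x-ct+X_1)$ for all $(t,x)$; fixing $t$ and comparing the profiles, this reads $\phi(z+X_2)\geq\phi(z+X_1)$ for all $z\in\R$.

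Next, I would run the identical argument with the roles of $X_1$ and $X_2$ interchanged, which yields $\phi(z+X_1)\geq\phi(z+X_2)$ for all $z\in\R$. Combining the two inequalities gives $\phi(z+X_1)=\phi(z+X_2)$ for every $z\in\R$. Since $\phi$ monotonically connects the two distinct stable steady states and is strictly decreasing on the interval $(0,\eta)=\spt(\phi')$, the translated profiles can only coincide if $X_1=X_2$, which is the claim.

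There is essentially no obstacle here: all the substantive work has been carried out in Lemma~\ref{lem:stab_new1}, and what remains is a purely formal symmetric deduction. The only point requiring a word of care is the last step, extracting $X_1=X_2$ from the equality of the shifted profiles; this uses that $\phi$ is genuinely nonconstant, i.e. that it connects two distinct steady states and has nonempty derivative support $(0,\eta)$, on which it is strictly monotone, so that the shift is uniquely determined.
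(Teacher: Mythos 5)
Your argument is correct and is exactly the deduction the paper intends: the paper gives no written proof, simply calling the proposition an immediate consequence of Lemma~\ref{lem:stab_new1}, and your two symmetric applications of that lemma followed by the strict-monotonicity argument on $\spt(\phi')=(0,\eta)$ fill in precisely that step.
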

Finally, we prove that:
\begin{prop}\label{prop:stab_12}
The functions $(t,x) \mapsto \phi (x-c t + X_-)$ and $(t,x) \mapsto \phi (x - ct - X_+)$ belong to~$\Omega_{c} (u)$.
\end{prop}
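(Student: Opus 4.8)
The plan is to produce a single element of $\Omega_{c}(u)$ that makes contact with the minimal shift $w(t,x):=\phi(x-ct+X_-)$, and then to upgrade that contact into an identity by means of Proposition~\ref{prop:strong}. By the change of variables $(t,x)\mapsto 1-u(t,-x)$, which solves an equation of the same type (with profile $z\mapsto 1-\phi(-z)$, speed $-c$, and the roles of $X_-$ and $X_+$ interchanged), it is enough to show that $w\in\Omega_c(u)$; the statement for $(t,x)\mapsto\phi(x-ct-X_+)$ then follows. First I would record that, writing $w_X(t,x):=\phi(x-ct+X)$ and $m(u_\infty):=\inf\{X:\ w_X\le u_\infty\text{ on }\R^2\}$ (the infimum being attained, exactly as in \eqref{uinfty_trapped}), one has $X_-=\sup_{u_\infty\in\Omega_c(u)}m(u_\infty)$; accordingly I pick $u_\infty^k\in\Omega_c(u)$ with $m_k:=m(u_\infty^k)\to X_-$, noting $w_{X_-}\le w_{m_k}\le u_\infty^k$.

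Next, for each $k$ the shift $m_k-\tfrac1k$ fails to be admissible for $u_\infty^k$, and using that $\Omega_c(u)$ is invariant under $(t,x)\mapsto(t+\sigma,x+c\sigma)$ I translate $u_\infty^k$ so that the defect occurs at time $0$: there is $x_k$ with $\phi(x_k+m_k)\le u_\infty^k(0,x_k)<\phi(x_k+m_k-\tfrac1k)$. Since $\phi$ is constant on $(-\infty,0]$ and on $[\eta,+\infty)$ and strictly decreasing on $[0,\eta]$ (because $\mathrm{spt}(\phi')=(0,\eta)$), this pins $a_k:=x_k+m_k$ into $(0,\eta+\tfrac1k)$, so $(x_k)$ stays bounded. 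Then the uniform parabolic estimates of Lemma~\ref{lemtn}, a diagonal extraction (each $u_\infty^k$ is itself a local limit of space–time translates of $u$, so $\Omega_c(u)$ is closed under such limits), and the squeeze give — along a subsequence — $u_\infty^k\to u_\infty^*\in\Omega_c(u)$ in $C^1_{loc}$ with $u_\infty^*$ nonincreasing in $x$, $x_k\to x^*$, $a_k\to a^*:=x^*+X_-\in[0,\eta]$, $u_\infty^*\ge w$ on $\R^2$, and $u_\infty^*(0,x^*)=\phi(a^*)$.

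The core step is then to deduce $u_\infty^*\equiv w$, whence $w\in\Omega_c(u)$. I would apply Proposition~\ref{prop:strong} to the nonincreasing entire solution $v_\infty(t,x):=u_\infty^*(t,x-X_-)$, which satisfies $v_\infty(t,x+ct)\ge\phi(x)$ everywhere and $v_\infty(0,a^*)=\phi(a^*)$. If $a^*\in(0,\eta)$, part $(i)$ forces $v_\infty(t,x+ct)=\phi(x)$ on $\R\times[0,\eta]$; if $a^*=\eta$, then $v_\infty(0,\eta)=0$ is the minimum of the nonnegative function $v_\infty(0,\cdot)$, so $\partial_x v_\infty(0,\eta)=0\ge0$ and part $(ii)$ applies with the same conclusion. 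In either case, extending by $0\le v_\infty\le1$ and spatial monotonicity outside $[0,\eta]$ yields $v_\infty\equiv\phi(\cdot-ct)$, i.e. $u_\infty^*\equiv w$.

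The hard part will be the remaining corner case $a^*=0$, where the contact occurs at the plateau level $\theta_{2I}=1$ and Proposition~\ref{prop:strong} provides no information (cf. the Remark after it, where $\phi(\cdot-X)$ touches $\phi$ at $0$ from above without equalling it). Here $u_\infty^*(0,-X_-)=1$; since $u_\infty^*\le1$, $u_\infty^*\ge w$, and $w(t,ct-X_-)=1$ for every $t$, spatial monotonicity forces $u_\infty^*\equiv1\equiv w$ on $\{x\le ct-X_-\}$, so $D:=u_\infty^*-w\ge0$ vanishes, together with $\partial_x D$, along the line $\{x=ct-X_-\}$ (both $u_\infty^*$ and $\phi$ leave the value $1$ with zero slope, by $C^1$ regularity). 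Near that line both solutions stay close to $1$ from below, a range where $f$ is Lipschitz, so on the half-domain $\{x>ct-X_-\}$ the function $D$ solves a linear parabolic equation with bounded coefficients; the parabolic Hopf lemma then excludes $D>0$ on a parabolic neighbourhood of the line, forcing $D\equiv0$ on a one-sided neighbourhood of it, i.e. $v_\infty(0,z)=\phi(z)$ on a subinterval of $(0,\eta)$, and the previous paragraph concludes. I expect essentially all the work to lie in this corner analysis: keeping the contact point bounded, and then dealing with the degeneracy precisely at the discontinuity level of $f$ where the strong maximum principle from Proposition~\ref{prop:strong} is unavailable.
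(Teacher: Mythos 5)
Most of your proposal tracks the paper's argument: the symmetry reduction $v=1-u(t,-x)$, the extraction of a contact point $a^*\in[0,\eta]$ from a minimizing sequence of shifts, and the treatment of $a^*\in(0,\eta)$ and $a^*=\eta$ via Proposition~\ref{prop:strong}$(i)$--$(ii)$ all match the paper's Cases 1 and 2. The genuine gap is in your corner case $a^*=0$. There you assert that near the line $\{x=ct-X_-\}$ ``both solutions stay close to $1$ from below, a range where $f$ is Lipschitz,'' so that $D=u_\infty^*-w$ solves a linear parabolic equation with bounded coefficients and a Hopf lemma forces $D\equiv0$ near the line. But this is precisely the configuration that the Remark following Proposition~\ref{prop:strong} warns cannot be resolved by a strong maximum principle: the pair $u_\infty^*=\phi(\cdot-ct+X_--\rho)$, $w=\phi(\cdot-ct+X_-)$ with $\rho>0$ gives two entire solutions with $D\ge0$ and $D=\partial_xD=0$ on the line, yet $D>0$ on $\{0<x-ct+X_-<\rho+\eta\}$. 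Your linearization breaks down because $u_\infty^*$ need not stay \emph{strictly} below $1$ to the right of the line; on any region where $u_\infty^*\equiv1>w$ the difference of reaction terms is $0-f(w)$, so the zeroth-order coefficient $\bigl(f(u_\infty^*)-f(w)\bigr)/D=-f(w)/(1-w)$ tends to $-\infty$ as $w\to1^-$, and the Hopf lemma does not apply. In fact the conclusion you aim for ($u_\infty^*\equiv w$) is not the right target here.

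The paper instead shows that this corner case leads to a \emph{contradiction}, via a dichotomy on $x_{1,\infty}$, the left edge of $\{u_\infty<1\}$. Either $u_\infty<1$ immediately to the right of the line for all $t$ in some interval $[-\delta,0]$, in which case the classical Hopf lemma applied in that thin region (where $u_\infty$ lies in $(\theta_{2I-1},1)$ and $f$ is genuinely Lipschitz) yields a strict sign for $\partial_xu_\infty(0,-X_-)$, contradicting $C^1$ regularity together with $u_\infty\le1=u_\infty(0,-X_-)$. Or there is $t_1<0$ at which $u_\infty\equiv1$ on a strictly larger half-line, in which case one can slide the wave, i.e.\ $u_\infty(t_1,\cdot)\ge\phi(\cdot-ct_1+X_--\varepsilon)$ for some $\varepsilon>0$, and then Lemma~\ref{lem:stab_new1} propagates this improved lower bound to \emph{every} element of $\Omega_c(u)$, contradicting the minimality of $X_-$. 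Your proposal never invokes Lemma~\ref{lem:stab_new1}, which is the key mechanism ruling out this second alternative; without it, or a substitute for it, the corner case remains open and the proof is incomplete.
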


Putting this together with Proposition~\ref{prop:stab_11}, we get that $X_- = - X_+$. By the definition of $X_-$ and $X_+$ it clearly follows that $\Omega_c (u) = \{ (t,x) \mapsto \phi (x-ct + X_-) \}$ and the stability Theorem~\ref{theo:stab_single} is proved.
\begin{proof}[Proof of Proposition~\ref{prop:stab_12}]
We only prove that $(t,x) \mapsto \phi (x-ct + X_-)$ belongs to $\Omega_c (u)$. The case of the other shift follows by a symmetry argument, letting $v (t,x) = 1- u(t,-x)$ which solves another reaction-diffusion equation with the same set of assumptions.

First recall that $spt (\phi ' ) = (0,\eta)$. In particular $ \phi (x) = 1$ for all $x \leq 0$ and $\phi (x)= 0$ for all $x \geq \eta$. Due to the definition of $X_-$, for any $\varepsilon >0$, there exists $u_\infty \in \Omega_{c} (u)$ such that
\begin{equation}\label{eq:lem_limit1}
\inf_{(t,x) \in \R^2} u_\infty (t,x) - \phi (x - c t + X_- - \varepsilon ) < 0.
\end{equation}
We further claim that
\begin{equation}\label{eq:lem_limit2}
\inf_{t \in \R} \inf_{x - ct + X_- - \varepsilon \in (0,\eta)}  u_\infty (t,x) - \phi (x-ct + X_- - \varepsilon ) < 0.
\end{equation}
Otherwise, we would have that $u_\infty (t,ct - X_- + \varepsilon) = 1$ for all $t \in \R$, hence $u_\infty (t,x) = 1 = \phi (x-ct + X_- - \varepsilon)$ for all $t \in \R$ and $x \leq ct - X_- + \varepsilon$. Moreover, $u_\infty (t,x) \geq 0 = \phi (x -ct + X_- - \varepsilon)$ for all $t \in \R$ and $x \geq ct - X_- + \varepsilon+\eta$. In other words, \eqref{eq:lem_limit2} must hold not to contradict \eqref{eq:lem_limit1}.

Now, by \eqref{eq:lem_limit2} and for any $n \in \mathbb{N}^*$, there exist $u_{\infty,n} \in \Omega_{c} (u)$ and some $(s_n, y_n) \in \mathbb{R}^2$ such that
$$u_{\infty, n} (s_n,y_n) < \phi \left(y_n - c s_n + X_- - \frac{1}{n} \right) , $$
and
$$ c s_n - X_- + \frac{1}{n} \leq y_n \leq \eta+c s_n - X_- + \frac{1}{n}.$$
In particular,
$$ \sup_{n \in \mathbb{N}}  | y_n - c s_n | < + \infty .$$
Then, since the functions $u_{\infty, n}$ belong to $\Omega_{c} (u)$, one can also find a sequence $t_n$ such that $t_n + s_n \to +\infty$ and
$$u (t_n + s_n , c t_n + c s_n + (y_n - c s_n) ) < \phi \left(y_n - c s_n + X_- - \frac{1}{n} \right)  .$$
Passing to the limit as $n \to +\infty$, and possibly up to extraction of another subsequence, we find some new $u_\infty \in \Omega_{c} (u)$ and $y_\infty = \lim_{n \to +\infty} y_n - c s_n \in [ -X_-,  \eta- X_- ]$ such that
$$u_\infty (0,  y_\infty)  = \phi (y_\infty + X_- ).$$
On the other hand, recall that by definition of $X_-$ we also have
$$u_\infty (t,x) \geq \phi (x - c t + X_-),$$
for all $(t,x) \in \mathbb{R}^2$, and in particular
\begin{equation}\label{x1inftybound}
x_{0,\infty} (t) \geq c t - X_- + \eta, \qquad x_{1,\infty} (t) \geq c t -X_- , 
\end{equation}
for all $t \in \R$, where the functions $x_{0,\infty}$, $x_{1,\infty}$ are such that \eqref{ineq_xt}-\eqref{ineq_xt_bis} hold with $u_\infty$ instead of~$u$. Notice that such functions are well defined thanks to~\eqref{uinfty_trapped}.\medskip

\textit{Case 1.}
Let us first assume that $u_\infty (0,y_\infty) = \phi (y_\infty + X_-) \in (0,1)$. Then by statement~$(i)$ of Proposition~\ref{prop:strong}, we get that 
$$u_\infty (t,x+ct) = \phi ( x + X_-),$$
for all $t \in \mathbb{R}$ and $x \in [-X_-,\eta - X_-]$. Due to the spatial monotonicity of $u_\infty$ and the fact that $0 \leq u_\infty \leq 1$, we even have that $$u_\infty (t, x ) \equiv \phi  (x  -ct + X_-),$$
in $\mathbb{R}^2$. In particular $(t,x) \mapsto \phi (x-ct + X_-)$ belongs to $\Omega_c (u)$.\medskip

\textit{Case 2.} Next assume that $u_\infty (0, y_\infty)   = \phi (y_\infty + X_-) = 0$, so that $y_\infty = \eta - X_-$. By the nonnegativity of $u_\infty$, we must also have that $\partial_x u_\infty (0,y_\infty)  \geq 0$. Then by statement~$(ii)$ of Proposition~\ref{prop:strong}, and as in the previous case, we conclude that $u_\infty (t,x) \equiv \phi (x - ct + X_-)$ belongs to $\Omega_c (u)$.\medskip

\textit{Case 3.} The arguments of the previous two cases apply to any time shift of $u_\infty$. Therefore, it only remains to consider the case when $u_\infty (t,x) > \phi (x  -ct + X_-)$ for all $t \in \R$ and $ - X_-  < x - ct \leq \eta - X_-$, and $u_\infty (0,  - X_-) = \phi ( 0) = 1$. As we will see, this last case actually leads to a contradiction.

We consider two subcases. The first subcase is the one when	
$$\exists \delta >0, \ \forall t \in [-\delta, 0], \quad x_{1,\infty} (t) = c t -  X_-  .$$
In that case we can actually apply the standard Hopf lemma on $$\{ (t,x) \, | \ t \in [-\delta,0] \mbox{ and } x \in [ct - X_-, ct - X_- +\epsilon]\},$$ for some $\epsilon >0$, and conclude that
$$\partial_x u_\infty (0, - X_-) > \phi '(0) = 0.$$
Since $u_\infty$ is $C^1$ in the spatial variable and $u_\infty \leq 1 = u_\infty (0,-X_-)$, this is a contradiction.

Now we turn to the second subcase and (recall~\eqref{x1inftybound}) assume that there is $t_1 < 0$ such that
$$x_{1,\infty} (t_1) > c t_1 - X_- .$$
Recalling here that $u_\infty (t_1,x) > \phi (x-ct_1 + X_-)$ for all $x \in (ct_1 - X_- , \eta+ct_1 - X_-]$, it is then straightforward that, for $\varepsilon >0$ small enough,
$$u_\infty (t_1, x) \geq \phi (x - ct_1 + X_- -\varepsilon ),$$
for all $x \in \R$. By the comparison principle, we also get that $u_\infty  (0, x) \geq \phi (x + X_- - \varepsilon)$. Applying Lemma~\ref{lem:stab_new1}, it follows that $\tilde{u}_\infty (t,x) \geq \phi (x -ct +X _- - \varepsilon)$ for all $\tilde{u}_\infty \in \Omega_c (u)$ and $(t,x) \in \R^2$. This in turn contradicts the definition of $X_-$.\medskip

Finally, we conclude that the only possibilities are those considered in Cases~1 and~2, hence $u_\infty (t,x)$ coincide with $\phi (x - ct + X_-)$ and $(t,x) \mapsto \phi (x - ct + X_-)$ belongs to $\Omega_c (u)$. Proposition~\ref{prop:stab_12} is proved.
\end{proof}

\section{Convergence to a terrace solution}\label{sec:terrace}

In this section we turn to the multistable case where the terrace solution may consist of two waves or more, and the proof of Theorem~\ref{theo:stab_terrace}. Here we denote by $\phi_j, \ j = 1,\cdots,J$ the traveling waves constituting the terrace, connecting respectively $\theta_{i_{j-1}} = p_{j-1}$ and $\theta_{i_j} = p_{j}$ with speed $c_j$ such that
\begin{equation*}
0 = p_0 < p_1 < \cdots < p_J = 1 \quad \mbox{and} \quad c_1 \ge c_2 \ge \cdots \ge c_J.
\end{equation*}
For convenience we also define $c_0 = +\infty$ and $c_{J+1} = -\infty$. Since we already addressed convergence to a single traveling wave in the previous section, here we assume $J \ge 2$.

We denote the terrace solution
\begin{equation*}
\Phi(t,x; \overrightarrow{\xi} \ ):=\sum_{1 \leq j \leq J} (\phi_j (x - \xi_j - c_j t) - p_{j-1})
\end{equation*}
for some shift $\overrightarrow{\xi} = (\xi_1, \cdots, \xi_J)$. Recall Remark~\ref{rem:solution} and note that $\overrightarrow{\xi}$ should be taken at least satisfying $\xi_j \ge \xi_{j+1} + \eta_{j+1}$ for $j=1,\cdots,J-1$ if $\spt(\phi_j^\prime) = (0,\eta_j)$ for all $j$.

Moreover, as we will see below the main difficulty arises when several waves of the terrace solution have the same speed. Therefore, when $c =c_{i+1} = \cdots = c_{i+k}$ for some integers $0 \leq i < i+ k \leq J$, we also introduce the notation
\begin{equation}\label{partialPhi}
\Phi^c(t,x; \overrightarrow{\xi}_c \ ):=p_i + \sum_{j=i+1}^{i+k} (\phi_j (x - \xi_j - c t) - p_{j-1}),
\end{equation}
which we refer to as a partial terrace solution, constituting of all traveling waves moving with the same speed~$c$. Notice that $\Phi^c$ is also a terrace solution connecting $p_i$ and $p_{i+k}$, and that~$\Phi$ and~$\Phi^c$ coincide in the moving frame with speed~$c$. Equivalently,
\begin{equation}\label{partialPhiback}
\Phi (t, x ; \overrightarrow{\xi} \ ) = \sum_{ c \in \{c_1,\cdots, c_J\} } (\Phi^c (t,x; \overrightarrow{\xi}_c ) - p_i ) ,
\end{equation}
where for each $c$, the integer~$i+1$ denotes the index of the lowest traveling wave of the partial terrace $\Phi^c$. \medskip

Our goal in this section is to show the following asymptotics theorem:
\begin{theo}\label{asymptotics:global} 
	Let $u(t,x)$ be the solution of \eqref{eq:rd}-\eqref{eq:ini} where $u_0$ is nonincreasing and
$$\lim_{x \to -\infty} u_0 (x) \in (\theta_{2I-1},1], \qquad \lim_{x \to +\infty} u_0 (x) \in [0, \theta_{1}).$$
	\begin{enumerate}[(i)]
		\item If $c \in (c_{j+1}, c_j)$ for some $j=0, \cdots, J$, then we have $u(t+t_n,x+ct_n) \to p_i$ locally uniformly.
		\item If $c = c_j$ for some $j=1,\cdots,J$, then we have $u(t+t_n,x+ct_n) \to \Phi^c(t,x; \overrightarrow{\xi}_c \ )$ locally uniformly for some shift $\overrightarrow{\xi}_c = (\xi_{i+1}, \cdots, \xi_{i+k})$, where we recall that $\Phi^c$ denotes the partial terrace solution constituting of all waves moving with speed~$c$.
	\end{enumerate}
\end{theo}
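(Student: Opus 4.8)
The plan is to reduce the general multistable asymptotics to the single-wave result Theorem~\ref{theo:stab_single} by a careful use of comparison arguments, dealing separately with the two cases according to whether the moving-frame speed $c$ equals one of the wave speeds $c_j$ or lies strictly between two consecutive speeds. The overall structure mirrors Section~\ref{sec:single}: first trap $u$ between suitable sub/super-solutions built out of the traveling waves of the terrace, then extract locally uniform limits via the parabolic estimates of Lemma~\ref{lemtn}, and finally identify the limit by a sliding argument. The key new ingredient compared with the single-wave case is to control the interaction between waves with equal speeds (which stay at bounded distance in the corresponding moving frame) versus waves with strictly ordered speeds (which drift infinitely far apart), so that each partial terrace $\Phi^c$ can be analyzed as an essentially isolated object.

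First I would establish the trapping estimate: using the known existence of the propagating terrace (Theorem~\ref{th:exist}) and arguing as in Proposition~\ref{th:support1} and Proposition~\ref{prop:trap1}, one shows that for some $T>0$ there is a shift $\overrightarrow{\xi}^{\,-}$, resp. $\overrightarrow{\xi}^{\,+}$, with
\begin{equation*}
\Phi(t,x;\overrightarrow{\xi}^{\,-}) \le u(t,x) \le \Phi(t,x;\overrightarrow{\xi}^{\,+}), \qquad t\ge T,\ x\in\R.
\end{equation*}
Indeed $u$ is nonincreasing in $x$, equals $1$ to the left and $0$ to the right of a bounded (in $x$, moving in $t$) interval, and by the comparison principle (Theorem~\ref{th:weak_comp}) any such monotone datum trapped between two shifts of $\Phi$ stays trapped. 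This immediately gives item~(i): if $c\in(c_{j+1},c_j)$, then both $\Phi(t,x+ct;\overrightarrow{\xi}^{\,\pm})$ converge locally uniformly to the constant $p_j$ (the platform between the $j$-th and $(j{+}1)$-th waves), because every wave $\phi_\ell$ with $c_\ell>c$ runs off to the left and every wave with $c_\ell<c$ runs off to the right; by the squeeze and Proposition~\ref{prop:trap2}, $u(t+t_n,x+ct_n)\to p_j$ locally uniformly, with no extraction needed.

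For item~(ii), fix $c=c_j$ and let $i{+}1,\dots,i{+}k$ index all waves of speed $c$, so $\Phi^c$ connects $p_i$ and $p_{i+k}$. In the moving frame with speed $c$, the waves with speed $>c$ disappear to $p_{i+k}$ on the left and those with speed $<c$ disappear to $p_i$ on the right, so any limit $u_\infty\in\Omega_c(u)$ is, by the trapping bound and Proposition~\ref{prop:trap2}, a nonincreasing-in-$x$ entire solution squeezed between two shifts of $\Phi^c$, hence it connects $p_{i+k}$ (at $x\to-\infty$) and $p_i$ (at $x\to+\infty$). Now I would run the sliding machinery of Section~\ref{sec:single} but \emph{componentwise}: define, for each $\ell\in\{i+1,\dots,i+k\}$, the optimal lower and upper shifts $\xi_\ell^-,\xi_\ell^+$ for the $\ell$-th wave against the family $\Omega_c(u)$, exactly as $X_-,X_+$ were defined, and prove the analogue of Lemma~\ref{lem:stab_new1}: if some $u_\infty\in\Omega_c(u)$ lies above the partial terrace $\Phi^c(\cdot,\cdot;\overrightarrow\eta)$ at time $0$, then every $\tilde u_\infty\in\Omega_c(u)$ lies above $\Phi^c(\cdot,\cdot;\overrightarrow\eta)$ for all time — the proof is the same level-set/Hopf-type argument, now applied at each of the interfaces separating two consecutive platforms among $p_i<p_{i+1}<\cdots<p_{i+k}$. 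Combined with Proposition~\ref{prop:strong} (applied to each $\phi_\ell$ separately, since each is connected and compact and connects two consecutive stable platforms) and the argument of Proposition~\ref{prop:stab_12}, one concludes that the extremal lower and upper shifts coincide, $\overrightarrow{\xi}_c^{\,-}=\overrightarrow{\xi}_c^{\,+}=:\overrightarrow{\xi}_c$, and hence $\Omega_c(u)=\{(t,x)\mapsto\Phi^c(t,x;\overrightarrow{\xi}_c)\}$, which is exactly~(ii).

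The main obstacle is precisely the coupling within a partial terrace $\Phi^c$: when $k\ge2$, the waves $\phi_{i+1},\dots,\phi_{i+k}$ sit side by side at bounded distance and the contact-point analysis cannot be localized to a single wave in the naive way — one must verify that touching $\Phi^c$ at an interior platform $p_{i+m}$ ($0<m<k$) forces contact on the adjacent wave profiles as well, which is where the compactness of the waves (from Theorem~\ref{th:exist}(ii)) and the strong-maximum-principle substitute of Proposition~\ref{prop:strong} do the decisive work; a secondary technical point is checking that the ``escaping'' waves of the other speeds genuinely do not interfere, i.e. that the error introduced by replacing $\Phi$ with $\Phi^c$ in the trapping bound tends to zero locally uniformly in the frame of speed $c$, which follows from $c_\ell\ne c$ for $\ell\notin\{i+1,\dots,i+k\}$ together with the fact (Proposition~\ref{th:support1}) that each wave is flat outside a compact set.
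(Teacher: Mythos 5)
Your proposal is correct and follows essentially the same route as the paper: trap $u$ between two shifted terrace solutions (the paper's Proposition~\ref{prop:trap3}), deduce item~(i) by squeezing, and for item~(ii) study $\Omega_c(u)$ with recursively defined componentwise extremal shifts, an induction over the waves sharing the speed~$c$ (the paper's Lemma~\ref{lem:stab_new2}, Lemma~\ref{lowerfront} and Proposition~\ref{prop:last}), and Proposition~\ref{prop:strong} plus the Hopf lemma to identify the limit. The only cosmetic difference is that the paper formalizes the non-interference of waves with other speeds by an explicit truncation reducing to the case where all waves move with speed~$c$, whereas you argue it directly; both amount to the same observation.
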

Notice that Theorem~\ref{theo:stab_terrace}, that is the uniform convergence to a terrace solution, immediately follows from Theorem~\ref{asymptotics:global}, together with~\eqref{partialPhiback} and the spatial monotonicity of~$u_0$, which is inherited by the solution in vertue of the weak maximum principle.\medskip

In order to establish Theorem~\ref{asymptotics:global}, first of all we adapt Propositions~\ref{th:support1} and~\ref{prop:trap1} to the multistable case.
\begin{prop}\label{prop:trap3}
	Let $u(t,x)$ be a weak solution of \eqref{eq:rd}-\eqref{eq:ini} with nonincreasing initial value~$u_0$, and define $2J$ functions $x_{j}^l, \ x_{j}^u \ : \ [0,+\infty) \to \R$ for $j=1,\cdots,J$ as 
	$$ x_{j}^l(t) := \sup\left\{ x \in \R\cup \left\{ \pm\infty \right\} \mid u(x,t) \in (p_{j-1}, p_j) \right\}, $$
	$$ x_{j}^u(t) := \inf\left\{ x \in \R\cup \left\{ \pm\infty \right\} \mid u(x,t) \in (p_{j-1}, p_j) \right\}. $$
	Then, there is a constant $T>0$ such that, for all $t \ge T$, 
	\begin{equation*}
		-\infty < x_{J}^u(t) \le x_{J}^l(t) \le x_{J-1}^u(t) \le \cdots \le x_{1}^l(t) < +\infty.
	\end{equation*}
In particular, there exist $\overrightarrow{\xi}_{sup}$ and $\overrightarrow{\xi}_{sub}$ such that 
	$$ \Phi(t,x;\overrightarrow{\xi}_{sub}) \le u(t,x) \le \Phi(t,x;\overrightarrow{\xi}_{sup}), $$
	for all $t\ge T$ and $x \in \R$.
\end{prop}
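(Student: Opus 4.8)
The plan is to mimic the proof of Proposition~\ref{th:support1}, but now tracking the $2J$ level sets associated with the platforms $p_0 < p_1 < \cdots < p_J$ rather than a single pair of level sets. First I would record that, exactly as in Proposition~\ref{th:support1}, the spatial monotonicity of $u_0$ together with the weak comparison principle (Theorem~\ref{th:weak_comp}) forces $u(t,\cdot)$ to be nonincreasing in $x$ for all $t \ge 0$, and that $0 \le u \le 1$. From monotonicity, for each fixed $t$ the set $\{x : u(t,x) \in (p_{j-1},p_j)\}$ is an interval (possibly empty or a half-line), so the functions $x_j^u(t) \le x_j^l(t)$ are well defined in $\R \cup \{\pm\infty\}$, and the ordering $x_J^u(t) \le x_J^l(t) \le x_{J-1}^u(t) \le \cdots \le x_1^l(t)$ is automatic from monotonicity (between consecutive intervals $u$ must pass through the constant value $p_{j}$, which may occur on a single point or a whole interval). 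The real content is therefore the two finiteness statements: $x_J^u(t) > -\infty$ and $x_1^l(t) < +\infty$ for all $t$ large enough.

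For $x_1^l(t) < +\infty$: since $f(p_1) = f(\theta_{2}) = 0$ (wait---$p_1 = \theta_{i_1}$ is a stable steady state, i.e.\ an even-indexed $\theta$) and $f$ is Lipschitz below it on $(0,\theta_1]$ with $f>0$ there, I would argue as in Proposition~\ref{th:support1}: if $u$ never vanished we could compare with a spatially homogeneous sub-solution of a modified equation $\partial_t u \le \partial_{xx} u + \tilde f(u)$ where $\tilde f \ge f$ on $(0,1]$ is Lipschitz with a stable zero slightly below $0$, forcing $u$ to drift below $0$, a contradiction. Hence there is a finite time $T_0$ and a point beyond which $u(T_0,\cdot) = 0$; then one dominates $u(T_0,\cdot)$ by a shift $\phi_J(\cdot - X)$ of the lowest traveling wave of the terrace (which connects $0$ and $p_{J-1}$---more precisely, one can stack shifts of the terrace waves, or simply use the full terrace function $\Phi(T_0,\cdot;\overrightarrow\xi_{\mathrm{sup}})$ for suitable $\overrightarrow\xi_{\mathrm{sup}}$), and propagate the bound forward by the comparison principle, which keeps $x_1^l(t)$ finite for all $t \ge T_0$. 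Symmetrically, working with $v(t,x) = 1 - u(t,-x)$ and the near-$1$ behavior of $f$ (where $f<0$ just above $\theta_{2I-1}$ and $f(1)=0$), one gets $x_J^u(t) > -\infty$ for $t$ large. Taking $T$ to be the maximum of the two transient times gives the stated ordered chain of finite real numbers.

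For the final sandwich inequality: once all $x_j^u(t), x_j^l(t)$ are finite and ordered for $t \ge T$, and $u(t,\cdot)$ is monotone with the values $p_0,\dots,p_J$ attained on (possibly degenerate) intervals, it is elementary that $u(t,\cdot)$ is trapped between two terrace functions. Concretely, for $\overrightarrow\xi_{\mathrm{sup}}$ I would choose the shifts so that each wave $\phi_j(\cdot - \xi_j^{\mathrm{sup}} - c_j t)$ sits to the right of (hence above) the corresponding piece of $u(t,\cdot)$ at time $t=T$---using that $x_j^l(T)$ is finite to locate each platform transition of $u$---and verify the admissibility constraint $\xi_j \ge \xi_{j+1} + \eta_{j+1}$ of Remark~\ref{rem:solution} (which can always be met by pushing the upper waves far enough apart), so that $\Phi(T,\cdot;\overrightarrow\xi_{\mathrm{sup}})$ is genuinely a super-solution; the comparison principle then extends $u(t,\cdot) \le \Phi(t,\cdot;\overrightarrow\xi_{\mathrm{sup}})$ to all $t \ge T$, and symmetrically for $\overrightarrow\xi_{\mathrm{sub}}$. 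I expect the main obstacle to be bookkeeping rather than conceptual: one must check that the individual comparison bounds at time $T$ can be chosen simultaneously compatible with the ordering constraint that makes $\Phi$ an actual (super/sub)solution, and that the monotonicity of $u$ indeed lets the $J$ stacked waves be fitted without overlap---this is where the fact that the $\theta_{i_j}$ are \emph{stable} steady states, so each $\phi_j$ is compact (Theorem~\ref{th:exist}(ii)), is used decisively, since it guarantees finite supports $(0,\eta_j)$ and hence that finitely many shifts suffice.
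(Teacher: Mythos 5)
Your proposal is correct and follows essentially the same route as the paper, which in fact omits the proof of Proposition~\ref{prop:trap3} entirely with the remark that it is identical to the proofs of Propositions~\ref{th:support1} and~\ref{prop:trap1}; your adaptation (monotonicity via comparison, finiteness of the extreme level-set positions via the modified reaction $\tilde f$ and subsequent domination by a shifted terrace function, then propagation by the weak comparison principle with the admissibility constraint of Remark~\ref{rem:solution}) is exactly what is intended. Two harmless slips: by \eqref{H1} one has $f<0$ (not $f>0$) on $(0,\theta_1)$, and in the paper's indexing the lowest wave of the terrace is $\phi_1$ (connecting $0$ and $p_1$), not $\phi_J$; neither affects the argument.
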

Since the proof is the same as that of Propositions~\ref{th:support1} and~\ref{prop:trap1}, here we omit it. Next we point out that Proposition~\ref{prop:trap2} is still valid regardless of the number of waves in the propagating terrace. Thus, from any sequence $n \to +\infty$ such that $t_n \to +\infty$ and $x_n \in \mathbb{R}$, we can extract a subsequence such that $u(t+t_n,x+ct_n)$ converges locally uniformly to some entire in time solution~$u_\infty (t,x)$. Furthermore, by Proposition~\ref{prop:trap3} we have
\begin{equation} \label{inequality1}
	\lim_{n\to\infty} \Phi(t+t_n,x+ct_n;\overrightarrow{\xi}_{sub}) \le u_\infty(t,x) \le \lim_{n\to\infty} \Phi(t+t_n,x+ct_n;\overrightarrow{\xi}_{sup}).
\end{equation}
If $c \in (c_{j+1}, c_j)$ for some $j$, then
$$ \lim_{n\to\infty} \Phi(t+t_n,x+ct_n;\overrightarrow{\xi}_{sub}) = \lim_{n\to\infty} \Phi(t+t_n,x+ct_n;\overrightarrow{\xi}_{sup}) = p_j.$$
Thus statement~$(i)$ of Theorem~\ref{asymptotics:global} already follows from~\eqref{inequality1}.

In the remainder of this section, we will show the second assertion of Theorem~\ref{asymptotics:global}, and assume that 
$$c_i < c = c_{i+1} = \cdots = c_{i+k} < c_{i+k+1},$$ for some integers $i \geq 0$ and $k \geq 1$. Notice that 
$$u \left( t , \frac{c + c_i}{2} t  \right) = p_i, \qquad u \left( t, \frac{c + c_{i+k+1}}{2} t \right) = p_{i+k+1},$$
$$\partial_x u \left( t , \frac{c + c_i}{2} t  \right) = \partial_x u \left( t, \frac{c + c_{i+k+1}}{2} t \right) = 0,$$
for all $t$ large enough. In particular, defining
$$\tilde{u} (t,x ) =  \left\{ 
\begin{array}{ll}
\displaystyle \frac{u (t,x) - p_i}{p_{i+k+1} - p_i} &\displaystyle  \text{ if } \,  \frac{c + c_{i+k+1}}{2} t \leq x \leq \frac{c + c_{i}}{2} t ,\vspace{3pt} \\
\displaystyle 0  & \displaystyle \text{ if } \, x >  \frac{c + c_{i}}{2} t , \vspace{3pt}\\
\displaystyle 1  & \displaystyle \text{ if } \, x <  \frac{c + c_{i+k+1}}{2} t ,
\end{array}
\right.
$$
we have that $0 \leq \tilde{u} \leq 1$ solves an equation of the type~\eqref{eq:rd}, which is still multistable in the sense of \eqref{H1}-\eqref{H2}-\eqref{H3}, and which by construction admits a terrace solution whose waves move with the same speed. Furthermore, the convergence of $\tilde{u}$ to a terrace solution is clearly equivalent to that of $u$ in the moving frame with speed~$c$. In other words, dropping the tilde for convenience, without loss of generality we can assume that $i=0$ and $k =J$, so that
$$\Phi (\cdot,\cdot ; \overrightarrow{\xi}) \equiv \Phi^c (\cdot,\cdot;\overrightarrow{\xi}_c ),$$
where $\Phi^c$ was defined in~\eqref{partialPhi}. Then our goal is still to prove that $u(t,\cdot + ct)$ converges locally uniformly to a terrace solution~$\Phi^c$ as $t \to +\infty$. In the case when $\Phi^c$ contains a single traveling wave, then we are back to the situation tackled in Theorem~\ref{theo:stab_single}. In particular, when the waves of the terrace have strictly ordered speeds, i.e.
$$c_1 < c_2 < \cdots < c_J,$$
then the proof of Theorem~\ref{asymptotics:global} and thus of Theorem~\ref{theo:stab_terrace} is already complete.

However it remains to deal with the case when~$\Phi^c$ contains two or more traveling waves and the remainder of this section is devoted to this situation. We point out that hereafter we keep the notation $\Phi^c$ to highlight the fact that all waves share the same speed~$c$.

As in the previous section, we define
$$\Omega_{c} (u) := \{ u_\infty \, | \ \ \exists t_n \to +\infty, \ u(t+ t_n, x + c t_n ) \to u_\infty (t,x) \mbox{ loc. unif. as } n \to +\infty \},$$
the set of limits of the solution in the moving frame with speed $c$. By Proposition~\ref{prop:trap2} this set is not empty, and by~\eqref{inequality1} and the fact that $\Phi \equiv \Phi^c$, any $u_\infty \in \Omega_c (u)$ must satisfy
\begin{equation}\label{severalfrontslimit}
\Phi^c (t,x; \overrightarrow{\xi}_{sub}) \leq u_\infty (t,x) \leq \Phi^c (t,x ;\overrightarrow{\xi}_{sup}),
\end{equation}
for all $(t,x) \in \mathbb{R}^2$. We will prove that $\Omega_c (u)$ actually is a singleton consisting of a single shift of the terrace solution.

Though it proceeds similarly as in the case of a single traveling wave, our argument here will rely on some induction to deal with the whole terrace solution. Hence we define another partial terrace solution, constituting of the lower $\ell$ waves, as 
\begin{equation}\label{partialterrace}
	\Phi_{\ell}(t,x; \overrightarrow{\xi}_\ell \ ):= \sum_{j=1}^{\ell} (\phi_j (x - \xi_j - c t) - p_{j-1}),
\end{equation}
with the convention that $\overrightarrow{\xi}_\ell = (\xi_1, \cdots, \xi_\ell)$ and $\Phi_0 \equiv 0$. Note that $$\Phi^c (t,x;\overrightarrow{\xi} \ ) = \Phi_J(t,x;\overrightarrow{\xi} \ ).$$
For later use, we also define a partial terrace solution constituting of the upper $J-\ell$ waves, which also rewrites as
$$\Phi^c  (t,x; \overrightarrow{\xi}) - \Phi_\ell (t,x; \overrightarrow{\xi}_l) = \sum_{j=\ell +1}^{J} ( \phi_j (x-\xi_j -ct) - p_{j-1}).$$
Notice that this partial terrace does not depend on the whole vector $ \overrightarrow{\xi}$ but only on the $J-\ell$ last components $(\xi_{\ell +1}, \cdots , \xi_{J})$.

Our next step is to adapt Lemma~\ref{lem:stab_new1} to this context.
\begin{lem}\label{lem:stab_new2}
	Assume that there exist $\overrightarrow{X} = (X_1, \cdots , X_J) \in \R^J$ and $u_\infty \in \Omega_c (u)$ such that
	$$ u_\infty ( 0,x) \geq \Phi^{c} (0,x;\overrightarrow{X}),$$
	for all $x \in \R$, where $\overrightarrow{X}$ is such that $\Phi^c (0,x;\overrightarrow{X})$ is a terrace solution.
	
	Then any $\tilde{u}_\infty \in \Omega_c (u)$ also satisfies
	$$\tilde{u}_\infty (t,x) \geq \Phi^{c} (t,x;\overrightarrow{X}),$$
	for all $(t,x) \in \R^2$.
\end{lem}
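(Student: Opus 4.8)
The plan is to mirror the proof of Lemma~\ref{lem:stab_new1}, reducing the statement to a single inequality in the moving frame at one instant and then invoking the weak comparison principle. Fix $\varepsilon>0$ and write $\overrightarrow{X}+\varepsilon:=(X_1+\varepsilon,\dots,X_J+\varepsilon)$. Since a uniform shift of all components preserves the ordering constraint $X_j\geq X_{j+1}+\eta_{j+1}$ of Remark~\ref{rem:solution}, the function $\Phi^c(\cdot,\cdot;\overrightarrow{X}+\varepsilon)$ is again a terrace solution, hence an exact entire-in-time weak solution of~\eqref{eq:rd} travelling with speed~$c$, and it lies pointwise below $\Phi^c(\cdot,\cdot;\overrightarrow{X})$ because each $\phi_j$ is nonincreasing. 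The goal is to produce a time $T>0$ with $u(T,x+cT)\geq \Phi^c(0,x;\overrightarrow{X}+\varepsilon)$ for all $x\in\R$. Once this holds, Theorem~\ref{th:weak_comp} applied to $u$ and the travelling terrace solution extends the inequality to all $t\geq T$; passing to the limit along any sequence defining $\tilde u_\infty\in\Omega_c(u)$ gives $\tilde u_\infty(t,x)\geq\Phi^c(t,x;\overrightarrow{X}+\varepsilon)$, and letting $\varepsilon\to0$ concludes.

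To obtain the single-time inequality I would first note that, from $u_\infty(0,\cdot)\geq\Phi^c(0,\cdot;\overrightarrow{X})$ and the fact that both are weak solutions, Theorem~\ref{th:weak_comp} yields $u_\infty(t,\cdot)\geq\Phi^c(t,\cdot;\overrightarrow{X})$ for all $t\geq0$. I then split the real line, in the frame moving at speed $c$, into the transition intervals of the individual waves and the plateaus on which $\Phi^c$ equals a platform $p_j$. On each transition interval the terrace is strictly decreasing, so the strict monotonicity of the relevant $\phi_j$ produces a positive gap $u_\infty>\Phi^c(\cdot;\overrightarrow{X}+\varepsilon)$ on compact subsets, and the locally uniform convergence $u(\cdot+t_n,\cdot+ct_n)\to u_\infty$ transfers this to $u$ for $n$ large. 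On the bottom plateau, ahead of all waves, $\Phi^c(\cdot;\overrightarrow{X}+\varepsilon)=0$ and the inequality is immediate from $u\geq0$.

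The genuinely new work concerns the plateaus at positive levels $p_j=\theta_{2m}$, including the top one $p_J=1$. Because $u$ is spatially nonincreasing, it suffices to exhibit a single point near the right endpoint of the shifted $p_j$-plateau where $u(T,\cdot)\geq p_j$; monotonicity then propagates $u\geq p_j$ over the whole plateau and to its left. I would argue by contradiction as in Lemma~\ref{lem:stab_new1}. If the level set where $u$ drops below $p_j$ remained too far to the left throughout a unit interval $[t_n,t_n+1]$, consider a fixed interval $I$ near the right end of the plateau. Where $u_\infty>p_j$ the convergence already forces $u>p_j$, contradicting the lagging; so the only dangerous portion is where $u_\infty\equiv p_j$, and there $u_\infty$ is flat with $\partial_x u_\infty\equiv0$. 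On such $I$ one has $u\in(p_j-\delta_n,p_j)$, the lower bound from $u\to u_\infty\geq p_j$ and the upper bound from the lagging hypothesis, so $f\geq f^*>0$ because $\theta_{2m}$ is stable from below, $f(\theta_{2m}^-)>0$ by~\eqref{H3}. Integrating the equation over $I$, the endpoint fluxes are negligible since $\partial_x u\to\partial_x u_\infty\equiv0$, whence $\int_I u$ grows by at least $f^*|I|$ over the unit interval and ultimately exceeds $p_j|I|$, contradicting $u<p_j$ on $I$.

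The remaining task, and the expected main obstacle, is organizing these finitely many mass arguments at the several levels $p_1,\dots,p_{J-1}$ into one inequality valid at a \emph{common} time $T$, a difficulty absent in the single-wave case where only the extreme plateau $u=1$ appears. I would carry this out by induction over $\ell$ using the partial terraces $\Phi_\ell$ of~\eqref{partialterrace}, processing the platforms from the top downwards: having secured the bound on the region lying above $p_j$, one advances the time within $[t_n,t_n+1]$ if necessary to fill the plateau at $p_j$, then proceeds to $p_{j-1}$. The delicate points are reconciling the interface-advance times at the different levels so that all the inequalities hold simultaneously, and justifying the endpoint-flux estimate uniformly at each junction between a plateau and its adjacent transition region. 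Everything else is a direct transcription of the proof of Lemma~\ref{lem:stab_new1}.
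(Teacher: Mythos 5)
Your proposal follows the paper's proof in all essentials: reduce the statement to a single-time inequality $u(T,\cdot+cT)\geq\Phi^c(T,\cdot+cT;\overrightarrow{X}-\varepsilon)$, propagate it forward by the weak comparison principle (Theorem~\ref{th:weak_comp}), and let $\varepsilon\to0$; treat the transition intervals via strict monotonicity of the relevant $\phi_j$ combined with the locally uniform convergence $u(\cdot+t_n,\cdot+ct_n)\to u_\infty$; and treat the interior plateaus by the same integration-in-space contradiction as in Lemma~\ref{lem:stab_new1}, using $f(\theta_{2m}^-)>0$ together with the vanishing of the boundary fluxes coming from $\partial_x u\to\partial_x u_\infty\equiv0$. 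This is exactly the paper's strategy.

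Two points in your sketch need correcting. First, a sign slip: with the parametrization $\Phi^c(t,x;\overrightarrow{\xi})=p_i+\sum_j(\phi_j(x-\xi_j-ct)-p_{j-1})$ and each $\phi_j$ nonincreasing, \emph{increasing} the components $\xi_j$ \emph{raises} the terrace, so the slightly lower comparison terrace is $\Phi^c(\cdot,\cdot;\overrightarrow{X}-\varepsilon)$, not $\overrightarrow{X}+\varepsilon$ (your observation that a uniform shift preserves the ordering constraint of Remark~\ref{rem:solution}, hence yields another terrace solution, is correct and is used in the paper). Second, and more substantively, the induction that you defer as ``the expected main obstacle'' should run bottom-up, not top-down. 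The paper inducts on the partial terraces $\Phi_\ell$ of~\eqref{partialterrace} built from the \emph{lower} $\ell$ waves, with the trivial base case $\Phi_0\equiv0$. The point of this ordering is that $\Phi_\ell\leq p_\ell$ everywhere, so the rank-$(\ell+1)$ inequality is a \emph{global} one: on the entire left half-line it reduces to $u\geq p_{\ell+1}$, which is precisely what your mass argument delivers, while to the right of the $(\ell+1)$-th transition zone $\Phi_{\ell+1}$ coincides with $\Phi_\ell$ and the induction hypothesis applies. A top-down scheme as you describe it (``having secured the bound on the region lying above $p_j$, \dots then proceeds to $p_{j-1}$'') would only produce inequalities on half-lines, which cannot be fed directly into the global weak comparison principle. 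Finally, the reconciliation of times that worries you is automatic: since each $\Phi_\ell(\cdot,\cdot;\overrightarrow{X}_\ell-\varepsilon)$ is itself a terrace solution, the comparison principle propagates the rank-$\ell$ bound to all $t\geq T_\ell$, so one simply picks the next time $T_{\ell+1}=\tau+t_n$ with $t_n>T_\ell$; no simultaneity issue arises.
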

\begin{proof}
We will show that for any $\varepsilon >0$, there exists $T>0$ such that
$$u (T, x + c T ) \geq \Phi^c (T, x + cT ; \overrightarrow{X} - \varepsilon)$$
for all $x \in \mathbb{R}$, where for simplicity we denoted $\overrightarrow{X} - \varepsilon = (X_1 - \varepsilon , \cdots, X_J - \varepsilon)$. The wanted conclusion then immediately follows by the weak comparison principle, and taking the large time and $\varepsilon \to 0$ limits. Recall also that $\Phi^c  \equiv \Phi_J $, where $\Phi_l$ ($l=0,\cdots,J$) is defined in \eqref{partialterrace}. Thus we may proceed by induction and show that, for any integer $\ell \in \{ 0 , \cdots, J\}$ and $\varepsilon >0$, there exists $T_\ell >0$ such that
\begin{equation}\label{lem_newclaim2_goodone}
u (T_\ell ,  x + c T_\ell) \geq \Phi_\ell ( T_\ell ,x + cT_\ell; \overrightarrow{X}_\ell - \varepsilon)
\end{equation}
for all $x \in \mathbb{R}$.

When $\ell = 0$, then $\Phi_0 \equiv 0$ and \eqref{lem_newclaim2_goodone} becomes trivial.
Now, fix $\varepsilon >0$ and let the claim~\eqref{lem_newclaim2_goodone} hold for some $\ell \in \left\{ 0,\cdots, J-1 \right\}$. Then, we will show that this claim still holds for $\ell+1$. First, up to some translation, the support of each $\phi_j '$ is an open interval $(0,\eta_j)$. Thus, for any $t \geq 0$ and $x \geq X_{\ell+1} + \eta_{\ell+1} - \varepsilon $, we have
$$\phi_{\ell +1} (x - X_{\ell+1} + \varepsilon ) = p_{\ell+1},$$
and, from the definition of $\Phi_{\ell}$,
$$\Phi_{\ell + 1} (t, x + ct ; \overrightarrow{X}_{\ell+1} - \varepsilon)  =	\sum_{j=1}^{\ell} (\phi_j (x - X_j  + \varepsilon ) - p_{j-1}) = \Phi_\ell (t, x+ct; \overrightarrow{X}_\ell - \varepsilon) .$$
Now, by our induction hypothesis and the weak comparison principle, we get that
$$u (t ,  x + c t) \geq \Phi_{\ell} ( t ,x + c t; \overrightarrow{X}_\ell - \varepsilon)$$
for any $t \geq T_\ell$ and $x \in \mathbb{R}$, hence
\begin{equation}\label{interval1}
u (t ,  x + c t) \geq \Phi_{\ell+1} ( t ,x + c t; \overrightarrow{X}_{\ell+1} - \varepsilon)
\end{equation}
for any $t \geq T_\ell$ and $x \geq X_{\ell+1} + \eta_{\ell+1} - \varepsilon$.

It remains to deal with the left half-line $(-\infty,X_{\ell+1} + \eta_{\ell+1}-\varepsilon)$, which we divide into two sub-intervals:
$$I_1 = [X_{\ell+1}-\ep / 2, X_{\ell+1} +\eta_{\ell+1} - \ep],$$
$$I_2 = (-\infty,X_{\ell+1}-\ep / 2).$$
Note that for the terrace solution to be continuous, we must have $X_j \geq X_{j +1 } + \eta_{j +1}$ for any integer~$j$. Then
\begin{equation}\label{phicphiell}
  \Phi^c (t,x+ct;\overrightarrow{X} - \varepsilon) =  \phi_{\ell+1}(x-X_{\ell+1}+\varepsilon), \qquad
  \Phi^c (t,x+ct;\overrightarrow{X}) \geq  \phi_{\ell+1}(x-X_{\ell+1}),
\end{equation}
for any $t \geq 0$ and $x \in I_1$. Moreover, for $x \in I_1$, by the strict monotonicity of $\phi_{\ell +1}$ in $(0, \eta_{\ell +1})$, we have that
\begin{equation}\label{phicphiell2}
\phi_{\ell +1} (x - X_{l+1} + \varepsilon) < \phi_{\ell +1} (x - X_{l+1}). 
\end{equation}
Recalling our assumption that $u_\infty (0,x) \geq \Phi^c (0,x;\overrightarrow{X})$, which also implies by the comparison principle that $u_\infty (t,x+ct) \geq \Phi^c (t, x+ct ; \overrightarrow{X})$ for $t \geq 0$, we infer from \eqref{phicphiell}-\eqref{phicphiell2} that
$$u_\infty (t, x+ct) > \Phi^c (t,x+ ct; \overrightarrow{X}-\varepsilon),$$
for any $t \geq 0$ and $x \in I_1$. From the definition of $\Omega_c (u)$ and Proposition~\ref{prop:trap2}, there exists a time sequence $t_n$ such that $$u(t+t_n,x+ct+ct_n) \to u_\infty(t,x+ct),$$
as $n\to +\infty$, where the convergence is understood in the locally uniform sense. Thus, there exists a time $t_n > T_\ell$ such that
	\begin{eqnarray}
		u(t+t_n,x+ct+ct_n) &\ge& \Phi^c(t+t_n,x+c(t+t_n); \overrightarrow{X}  - \varepsilon) \nonumber  \\ &\ge& \Phi_{\ell+1} (t+t_n,x+c(t+t_n);\overrightarrow{X}_{\ell+1}- \varepsilon), \label{interval2}
	\end{eqnarray}
in the compact domain $(t,x) \in [0,1] \times I_1$.

Next, we deal with $x \in I_2$. We claim that there exist $n$ arbitrarily large and $\tau \in [0,1]$ such that 
\begin{equation}\label{interval3}
x_{\ell+1}^u(\tau+t_n) \ge c\tau + ct_n + X_{\ell+1} - {\ep \over 2},
\end{equation}
where $x_{\ell+1}^u$ was defined in Proposition~\ref{prop:trap3}. If \eqref{interval3} holds true, then by spatial monotonicity of~$u$ we get that $$u  (\tau+ t_n  , x +c \tau + c t_n) \geq p_{\ell+1} = \Phi_{\ell+1} (t+t_n,x+c(t+t_n);\overrightarrow{X}_{\ell +1} - \varepsilon),$$
for $x \in I_2$. Together with~\eqref{interval1}-\eqref{interval2}, this implies that 
$$u (T_{\ell +1}, c + c T_{\ell +1}) \geq \Phi_{l+1} (T_{\ell+1} , x + c T_{\ell+1} ; \overrightarrow{X}_{\ell +1} - \varepsilon)$$
with $T_{\ell +1} = \tau + t_n$, i.e.~\eqref{lem_newclaim2_goodone} holds true at rank $\ell +1$ and by induction we reach the wanted conclusion.

Thus it only remains to check \eqref{interval3}. Since this proceeds exactly as in the proof of Lemma~\ref{lem:stab_new1}, we will omit some details. We proceed by contradiction and assume that
	\begin{equation} \label{contra}
		x_{\ell+1}^u (t + t_n) <  c t + c t_n - X_{\ell+1} - \frac{\varepsilon}{2},
	\end{equation}
	for any large~$n$ and $t \in [0,1]$. Then the convergence of $u (t+t_n , x + ct +ct_n)$ to $u_\infty \geq \Phi^c (\cdot, \cdot; \overrightarrow{X})$ together with~\eqref{contra} imply that 
$$u(t+t_n, x + ct+ ct_n)\in \left( p_{\ell+1} - \frac{1}{n} , p_{\ell+1} \right)$$ for all $t \in [0,1]$ and $x \in I := [-X_{\ell +1} - \varepsilon/2, -X_{\ell +1}]$. Integrating \eqref{eq:rd} on the same subdomain and passing to the limit as $n \to +\infty$,, we get that
	$$\liminf_{n \to \infty}  \int_I u(t_n+1,x+c(t_n+1)) dx \ge {\varepsilon \over 2} (f^* + p_{\ell+1}  > {\varepsilon \over 2} p_{\ell+1},$$ where $0 < f^* < \inf_{(1-1/n,1)}f$. Thus, $u(t_n+1,x+c(t_n+1))$ exceeds $p_{\ell +1}$ for some $x \in I$, which contradicts~\eqref{contra}. Thus~\eqref{interval3} holds true and this completes the proof of Lemma~\ref{lem:stab_new2}.	
\end{proof}
Now recall~\eqref{severalfrontslimit}, i.e. that all $u_\infty \in \Omega_c (u)$ are framed between the same two shifts of the terrace solution $\Phi^c$. Thus we can introduce the critical shifts $\overrightarrow{X}_- = (X_{1}^- , \cdots , X_{J}^-)$ and $\overrightarrow{X}_+ = (X_{1}^+ , \cdots , X_{J}^+)$, where each component of $\overrightarrow{X}_-$ can be defined recursively as 
\begin{eqnarray}
	\nonumber && X_{1}^- := \inf \{ X \, | \ \  \forall u_\infty \in \Omega_{c} (u),  \  \forall (t,x) \in \R^2, \ \Phi_{1} (t,x;  -X) \leq u_\infty (t,x) \} , \\
	\nonumber && X_{2}^- := \inf \{ X \, | \ \  \forall u_\infty \in \Omega_{c} (u),  \  \forall (t,x) \in \R^2, \ \Phi_{2} (t,x; ( -X_{1}^-,-X)) \leq u_\infty (t,x) \} , \\
	\nonumber && \cdots \\
	\nonumber && X_{J}^- :=  \inf \{ X \, | \ \  \forall u_\infty \in \Omega_{c} (u),  \  \forall (t,x) \in \R^2, \ \Phi_{J} (t,x; ( -X_{1}^-,\cdots,-X)) \leq u_\infty (t,x) \} ,
\end{eqnarray}
and each component of $\overrightarrow{X}_+$ as
\begin{eqnarray}
	\nonumber && X_{J}^+ :=  \inf \{ X \, | \ \  \forall u_\infty \in \Omega_{c} (u),  \  \forall (t,x) \in \R^2, \ (\Phi^c - \Phi_{J-1}) (t,x; X)  \geq u_\infty (t,x) \} \\
	\nonumber && X_{J-1}^+ :=  \inf \{ X \, | \ \  \forall u_\infty \in \Omega_{c} (u),  \  \forall (t,x) \in \R^2, \ ( \Phi^c - \Phi_{J-2}) (t,x; (X, X_J^+)   \geq u_\infty (t,x) \} \\
	\nonumber && \cdots \\
\nonumber && X_{1}^+ := \inf \{ X \, | \ \  \forall u_\infty \in \Omega_{c} (u),  \  \forall (t,x) \in \R^2, \ \Phi^c (t,x;  (X,X_2^+, \cdots, X_J^+ )) \geq u_\infty (t,x) \} .
\end{eqnarray}
Now we will show that both terrace functions $\Phi^c (t,x; -\overrightarrow{X}_-)$ and $\Phi^c (t,x;\overrightarrow{X}_+)$ belong to $\Omega_c (u)$, and also that they are terrace solutions (recall Definition~\ref{terracesolution}). It will then follow by Lemma~\ref{lem:stab_new2} that both terraces actually coincide, and in particular $\Omega_c (u)$ reduces to a singleton. Below we only deal with the lower terrace function $\Phi^c (t,x;-\overrightarrow{X}_-)$, since the argument for the upper terrace function $\Phi^{c}(t,x;\overrightarrow{X}_+)$ is symmetrical.

\begin{lem} \label{lowerfront}
	There exists a function $u_\infty \in \Omega_c(u)$ such that 
\begin{equation}\label{onemoretime}
u_\infty(t,x) = \phi_{1}(x-ct+X_{1}^-)
\end{equation}
for all $t \in \mathbb{R}$ and $x \in (ct-X_{1}^-,\infty)$. 

In particular we must have that
\begin{equation}\label{indeedterrace?}
-X_{2}^- - \eta_2 \leq - X_1^-,
\end{equation}
so that 
$$\Phi_2 (t,x; (-X_1^-, -X_2^-))$$
is a terrace solution of \eqref{eq:rd}.
\end{lem}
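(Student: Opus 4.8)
The plan is to follow the structure of the proof of Proposition~\ref{prop:stab_12}, now applied to the lowest wave $\phi_1$ of the partial terrace, and then to read off \eqref{indeedterrace?} from \eqref{onemoretime} and the definition of $X_2^-$. First I would produce a contact point of some $u_\infty\in\Omega_c(u)$ with the shifted wave $\phi_1(\cdot - c\cdot + X_1^-)$. By minimality in the definition of $X_1^-$, for every $n\in\mathbb{N}^*$ there exist $u_{\infty,n}\in\Omega_c(u)$ and $(s_n,y_n)\in\R^2$ with $u_{\infty,n}(s_n,y_n)<\phi_1(y_n-cs_n+X_1^- - 1/n)$. Since $u_{\infty,n}\ge0$ and, by continuity in the shift (as for \eqref{uinfty_trapped}--\eqref{severalfrontslimit}), $u_{\infty,n}(t,x)\ge\phi_1(x-ct+X_1^-)$, one checks that necessarily $z_n:=y_n-cs_n$ lies in $(-X_1^-,\eta_1-X_1^- + 1/n]$, so $(z_n)$ is bounded. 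Using $u_{\infty,n}\in\Omega_c(u)$, for each $n$ one picks $T_n\ge n$ with $u(T_n,z_n+cT_n)\le u_{\infty,n}(s_n,y_n)+1/n$; applying Proposition~\ref{prop:trap2} along $(T_n)$ and extracting, $u(\cdot+T_n,\cdot+cT_n)\to u_\infty\in\Omega_c(u)$ locally uniformly and $z_n\to y_\infty\in[-X_1^-,\eta_1-X_1^-]$, whence $u_\infty(0,y_\infty)\le\phi_1(y_\infty+X_1^-)$. Combined with $u_\infty(t,x)\ge\phi_1(x-ct+X_1^-)$ this gives $u_\infty(0,y_\infty)=\phi_1(y_\infty+X_1^-)$ with $y_\infty+X_1^-\in[0,\eta_1]$.

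Next I would apply the partial strong maximum principle. By Theorem~\ref{th:exist}$(i)$, $p_1=\theta_{i_1}$ is an even-indexed (stable) steady state, so $\phi_1$ is a connected, compact traveling wave between the stable states $0$ and $p_1$ and Proposition~\ref{prop:strong} applies to it up to the shift $X_1^-$; moreover $u_\infty$ is entire in time, spatially nonincreasing (Proposition~\ref{prop:trap2}) and $\ge\phi_1(\cdot - c\cdot + X_1^-)$. If $y_\infty+X_1^-\in(0,\eta_1)$, Proposition~\ref{prop:strong}$(i)$ yields $u_\infty(t,x)=\phi_1(x-ct+X_1^-)$ on $\{x-ct\in[-X_1^-,\eta_1-X_1^-]\}$; if $y_\infty+X_1^-=\eta_1$, then $u_\infty(0,\eta_1-X_1^-)=0$ and $\partial_xu_\infty(0,\eta_1-X_1^-)\ge0$ by nonnegativity, so Proposition~\ref{prop:strong}$(ii)$ gives the same identity. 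Since $u_\infty\ge0$ and is spatially nonincreasing, the identity extends to $\{x-ct\ge -X_1^-\}$, which is \eqref{onemoretime}. As Proposition~\ref{prop:strong} may be invoked at any base time, the same conclusion holds whenever $u_\infty(t_0,x_0)=\phi_1(x_0-ct_0+X_1^-)<p_1$ for some $(t_0,x_0)$. It therefore only remains to rule out the degenerate case
\[
u_\infty(t,x)>\phi_1(x-ct+X_1^-)\ \text{ for all }t\in\R\text{ and }-X_1^-<x-ct\le\eta_1-X_1^-,\qquad u_\infty(0,-X_1^-)=\phi_1(0)=p_1 .
\]

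To exclude this case, set $y_1(t):=\inf\{x\mid u_\infty(t,x)<p_1\}\ge ct-X_1^-$. If $y_1(t)=ct-X_1^-$ on a left neighbourhood of $0$, then on a thin parabolic rectangle $\{t\in[-\delta,0],\ ct-X_1^-<x<ct-X_1^-+\epsilon\}$ both $u_\infty$ and $\phi_1(\cdot - c\cdot + X_1^-)$ stay in the interval $(\theta_{i_1-1},p_1)$, on which $f$ is Lipschitz; hence $w:=u_\infty-\phi_1(\cdot - c\cdot + X_1^-)\ge0$ solves a linear parabolic equation there, $w>0$ inside, $w(0,-X_1^-)=0$, and the parabolic Hopf lemma (after the usual exponential change of unknown to make the zeroth-order coefficient nonnegative) forces $\partial_xu_\infty(0,-X_1^-)>\phi_1'(0)=0$, contradicting the spatial monotonicity of $u_\infty$. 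Otherwise $y_1(t_1)>ct_1-X_1^-$ for some $t_1$, and then, using the strict inequality on the closed active zone together with the Lipschitz bound on $\phi_1$ there, one gets $u_\infty(t_1,\cdot)\ge\phi_1(\cdot -ct_1+X_1^- - \varepsilon)$ for $\varepsilon>0$ small; by the comparison principle $u_\infty(0,\cdot)\ge\phi_1(\cdot+X_1^- - \varepsilon)$, and then the argument of Lemma~\ref{lem:stab_new1} (equivalently the first inductive step in the proof of Lemma~\ref{lem:stab_new2}, with $p_1$ in place of $1$ and using $\lim_{u\to p_1^-}f(u)>0$) gives $\tilde u_\infty(t,x)\ge\phi_1(x-ct+X_1^- - \varepsilon)$ for every $\tilde u_\infty\in\Omega_c(u)$, contradicting the definition of $X_1^-$. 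This establishes \eqref{onemoretime}.

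Finally I would derive \eqref{indeedterrace?}. Applying the definition of $X_2^-$ to the $u_\infty$ just produced, $u_\infty(t,x)\ge\Phi_2(t,x;(-X_1^-,-X_2^-))=\phi_1(x-ct+X_1^-)+\phi_2(x-ct+X_2^-)-p_1$ on $\R^2$. For $x\ge ct-X_1^-$, \eqref{onemoretime} gives $u_\infty(t,x)=\phi_1(x-ct+X_1^-)$, whence $\phi_2(x-ct+X_2^-)\le p_1$; since $\phi_2$ takes values in $[p_1,p_2]$ this forces $\phi_2(x-ct+X_2^-)=p_1$, i.e. $x-ct+X_2^-\ge\eta_2$, for all such $x$, and taking $x=ct-X_1^-$ yields $X_2^-\ge X_1^-+\eta_2$. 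This gives \eqref{indeedterrace?} and is exactly the gap condition of Remark~\ref{rem:solution} ensuring that $\Phi_2(\cdot,\cdot;(-X_1^-,-X_2^-))$ is a terrace solution of \eqref{eq:rd}. I expect the main obstacle to be the degenerate case above: the contact of $u_\infty$ with $\phi_1$ takes place at the level $p_1$, where $f$ jumps, so the Hopf-type argument must be carried out just below $p_1$, and the sliding/perturbation step needs the same care as in Case~3 of the proof of Proposition~\ref{prop:stab_12}.
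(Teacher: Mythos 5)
Your proposal is correct and takes essentially the same route as the paper: a contact point with $\phi_1(\cdot-c\,\cdot+X_1^-)$ is produced by a diagonal extraction from the definition of $X_1^-$, Proposition~\ref{prop:strong} upgrades it to the identity \eqref{onemoretime} on the active zone, the degenerate contact at the level $p_1$ is excluded via the Hopf lemma together with the sliding step, and \eqref{indeedterrace?} is read off exactly as you do. The only detail to make explicit in your second subcase is that, in the multi-wave setting, one should invoke Lemma~\ref{lem:stab_new2} after assembling the full terrace lower bound $\Phi^c(\cdot,\cdot;(-X_1^-+\varepsilon,\xi_{sub,2},\dots))$ from \eqref{severalfrontslimit} (as the paper does when proving \eqref{eq:lem_limit3}), rather than Lemma~\ref{lem:stab_new1}, which is stated only for a single wave connecting $0$ and $1$.
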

\begin{rem}
This proof shares some similarities with that of Proposition~\ref{prop:stab_12}, but there are some differences to prepare the ground for our induction. Hence we repeat it for clarity.
\end{rem}
\begin{proof}
	First recall that $spt (\phi_{1} ' ) = (0,\eta_{1})$. In particular $ \phi_{1} (x) = p_{1}$ for all $x \leq 0$ and $\phi (x)= 0$ for all $x \geq \eta_{1}$. Let $\widehat{u}_\infty$ be any function in $\Omega_c (u)$. By the definition of $X_1^-$ and by continuity, we have that
$$\widehat{u}_\infty (t,x) \geq \phi_1 (x -ct + X_1^-),$$
for all $(t,x) \in \mathbb{R}^2$.

Now, we claim that for any $\varepsilon >0$, we have
	\begin{equation}\label{eq:lem_limit3}
		\inf_{(t,x) \in \R^2} \widehat{u}_\infty (t,x) - \phi_{1} (x - c t + X_{1}^- - \varepsilon ) < 0.
	\end{equation}
Indeed, if \eqref{eq:lem_limit3} does not hold, then
$$\widehat{u}_\infty (t,x) \geq \phi_1 (x-ct + X_1^- - \varepsilon)$$
for all $(t,x) \in \mathbb{R}^2$. Together with \eqref{severalfrontslimit}, we get that
$$\widehat{u}_\infty (t,x) \geq \Phi^c (t,x; \overrightarrow{X} ),$$
where $\overrightarrow{X} = (-X_1^- + \varepsilon, \xi_{sub,2}, \cdots, \xi_{sub,k} ) $. It also follows from~\eqref{severalfrontslimit} and the definition of $X_1^-$ that 
$$-X_1^- + \varepsilon  \geq - X_1^- \geq \xi_{sub,1}.$$
In particular, since $\Phi^c (\cdot, \cdot ; \overrightarrow{\xi}_{sub})$ is a terrace solution, then so is $\Phi^c (\cdot, \cdot ; \overrightarrow{X})$. More precisely, this inequality ensures that this new shift preserves the needed regularity for a terrace function to solve~\eqref{eq:rd}. Therefore we can apply Lemma~\ref{lem:stab_new2}, and we get that any other $\tilde{u}_\infty \in \Omega_c (u)$ also satisfies that
$$\tilde{u}_\infty (t,x) \geq \Phi^c (t,x; \overrightarrow{X} ) \geq \Phi_1 (t,x; -X_1^- + \varepsilon),$$
for all $(t,x) \in \mathbb{R}^2$.
This contradicts the definition of $X_1^-$, hence \eqref{eq:lem_limit3} holds true for any $\widehat{u}_\infty \in \Omega_c (u)$.

Next we further claim that
	\begin{equation}\label{eq:lem_limit4}
		\inf_{t \in \R} \inf_{x - ct + X_{1}^- - \varepsilon \in (0,\eta_1)}  \widehat{u}_\infty (t,x) - \phi_{1} (x-ct + X_{1}^- - \varepsilon ) < 0.
	\end{equation}
	Otherwise, we would have that $\widehat{u}_\infty (t,ct - X_{1}^- + \varepsilon) \ge p_{1}$ for all $t \in \R$, hence $\widehat{u}_\infty (t,x) \ge p_{1} = \phi_{1} (x-ct + X_{1}^- - \varepsilon)$ for all $t \in \R$ and $x \leq ct - X_{1}^- + \varepsilon$. Moreover, $\widehat{u}_\infty (t,x) \geq 0 =  \phi_{1} (x -ct + X_{1}^- - \varepsilon)$ for all $t \in \R$ and $x \geq ct - X_{1}^- + \varepsilon+\eta_1$. In other words, \eqref{eq:lem_limit4} must hold not to contradict \eqref{eq:lem_limit3}.
	
	Now, by \eqref{eq:lem_limit4} and for any $n \in \mathbb{N}^*$, there exists some $(s_n, y_n) \in \mathbb{R}^2$ such that
\begin{equation}\label{moreandmore}
\phi_1 (y_n  - c s_n + X_1^-) \leq u_{\infty} (s_n,y_n) < \phi_{1} \left(y_n - c s_n + X_{1}^- - \frac{1}{n} \right) , 
\end{equation}
	and
	$$ c s_n - X_{1}^- + \frac{1}{n} \leq y_n \leq \eta_1 +c s_n - X_{1}^- + \frac{1}{n}.$$
	In particular,
	$$ \sup_{n \in \mathbb{N}}  | y_n - c s_n | < + \infty ,$$
and up to extraction of another subsequence we may assume that
$$y_\infty := \lim_{n \to +\infty} y_n - c s_n \in [-X_1^- , \eta_1 - X_1^-].$$
Furthermore, since the function $\widehat{u}_\infty$ belongs to $\Omega_{c} (u)$, one can also find a sequence $t_n$ such that $t_n + s_n \to +\infty$ and
$$u (\cdot +t_n + s_n, \cdot + ct_n + cs_n ) - \widehat{u}_\infty (\cdot + s_n, \cdot  + c s_n ) \to 0,$$
locally uniformly as $n \to +\infty$. By Proposition~\ref{prop:trap2}, it follows that, possibly up to extraction of another subsequence, both functions
$$u (\cdot + t_n + s_n, \cdot + ct_n+ c s_n ), \quad \widehat{u}_\infty (\cdot + s_n, \cdot + c s_n ) $$
converge locally uniformly as $n \to +\infty$ to some (possibly distinct)
$$u_\infty \in \Omega_c (u).$$
Hence, by the definition of $X_1^-$ we have
$$u_\infty (t,x) \geq \phi_1 (x - ct + X_1^-)$$
for all $(t,x) \in \mathbb{R}^2$, and also
$$u_\infty (0, y_\infty) = \phi_1 (y_\infty +X_1^-)$$
by passing to the limit in \eqref{moreandmore}.

If $y_\infty \in ( - X_{1}^-,  \eta_1 -  X_{1}^- ]$, then we can apply Proposition~\ref{prop:strong} to infer that 
$$u_\infty (t,x) = \phi_1 (x -ct + X_1^-),$$
for all $t \in \mathbb{R}$ and $x \in [ct -X_1^-, ct  - X_1^- + \eta_1] $. Since $u_\infty$ is nonnegative and nonincreasing in space, we find that the same equality also holds for $x \geq  ct - X_1^- +\eta$, and we have reached the wanted conclusion.

It only remains to consider the case when $y_\infty = -X_{1}^-$, which means that $u_\infty (0,  - X_{1}^-) = \phi_1 ( 0) = p_1$, and also $u_\infty (t,x) > \phi_1 (x-ct + X_1^-)$ for all $x \in (ct - X_1^-, ct - X_1^- + \eta]$. As in the proof of Proposition~\ref{prop:stab_12} we apply the Hopf lemma and find that
$$\partial_x u_\infty (0,-X_1^-) >0,$$
which contradicts the fact that any function in $\Omega_c (u)$ must be nonincreasing in space. Hence this case does not happen and the proof of \eqref{onemoretime} is complete.

The inequality~\eqref{indeedterrace?} easily follows from~\eqref{onemoretime}. Indeed, we have that $u_\infty (0,-X_1^-) = \phi_1 (0) = p_1$. By the definition of $X_2^-$ and a continuity argument, we also have that
$$\Phi_2 (t,x; (-X_1^-, -X_2^-)) \leq u_\infty (t,x).$$
At the point $(0,-X_1^-)$, this equality rewrites as
$$\phi_1 (0 ) + \phi_2 ( X_2^- - X_1^-  ) - p_1 \leq p_1 .$$
From $\phi_1 (0) = p_1$, we get that
$$\phi_2 (X_2^- - X_1^-) \leq p_1.$$ 
Since the traveling wave $\phi_2$ connects $p_2$ and $p_1$, this is only possible if 
$$\phi_2 (X_2^- - X_1^- ) = p_1 \ \mbox{ and } \ -X_2^- - \eta_2 \leq - X_1^-,$$
where $(0,\eta_2)$ denotes the support of $\phi_2 '$. Finally, this implies that $\Phi_2 (\cdot,\cdot; (-X_1^- , - X_2^-)$ and its spatial derivative are continuous, and therefore it solves~\eqref{eq:rd}; see also Remark~\ref{rem:solution}. We have reached the wanted conclusion that it is a terrace solution and the lemma is proved.
\end{proof}

\begin{prop}\label{prop:last}
	The terrace solution $\Phi^c(t,x;-\overrightarrow{X}_-)$ is contained in $\Omega_c(u)$.
\end{prop}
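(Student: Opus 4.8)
The plan is to prove Proposition~\ref{prop:last} by induction on $\ell \in \{1,\dots,J\}$, promoting Lemma~\ref{lowerfront} one wave at a time. The inductive statement I would carry is that $\Phi_\ell(t,x;(-X_1^-,\dots,-X_\ell^-))$ is a terrace solution and that there exists $u_\infty^{(\ell)} \in \Omega_c(u)$ coinciding with $\Phi^c(t,x;-\overrightarrow{X}_-)$ on the half-line $\{x \ge ct - X_\ell^-\}$; on that half-line the upper waves of $\Phi^c$ have already reached their right-hand limits, so this is the same as coinciding there with the bottom-$\ell$ partial terrace $\Phi_\ell(t,x;(-X_1^-,\dots,-X_\ell^-))$. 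The base case $\ell=1$ is exactly Lemma~\ref{lowerfront}, whose conclusion~\eqref{onemoretime} gives the coincidence and~\eqref{indeedterrace?} the $\ell=2$ terrace-solution property. At $\ell=J$ one has $\Phi_J=\Phi^c$ and $\Phi^c(t,x;-\overrightarrow{X}_-)\equiv 1$ for $x \le ct - X_J^-$, so spatial monotonicity together with $0 \le u_\infty^{(J)} \le 1$ and $u_\infty^{(J)}(t, ct-X_J^-) = 1$ extends the coincidence to all of $\R^2$, which is the assertion of the proposition.

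For the step $\ell \to \ell+1$ (with $\ell \le J-1$) I would replay the proof of Lemma~\ref{lowerfront} centred on $\phi_{\ell+1}$. The recursive definition of $X_{\ell+1}^-$ furnishes, for every small $\ep>0$, some $\widehat u_\infty \in \Omega_c(u)$ along which $\Phi_{\ell+1}(\cdot;(-X_1^-,\dots,-X_\ell^-,-X_{\ell+1}^-+\ep))$ fails to be a lower barrier somewhere; the recursive definitions of $X_1^-,\dots,X_\ell^-$ force $\widehat u_\infty \ge \Phi_\ell(\cdot;(-X_1^-,\dots,-X_\ell^-))$ everywhere, which excludes a defect to the right of the transition window of $\phi_{\ell+1}$, while a mass/integration argument in the spirit of~\eqref{eq:lem_limit4}--\eqref{interval3} combined with Lemma~\ref{lem:stab_new2} excludes a defect supported purely on the left half-line (there $\widehat u_\infty$ would stay below $p_{\ell+1}$, against the definition of $X_{\ell+1}^-$). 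Hence the defect sits in the closed window of $\phi_{\ell+1}$; extracting a limit of $u(\cdot+t_n+s_n,\cdot+c(t_n+s_n))$ along adapted times and invoking Proposition~\ref{prop:trap2}, I obtain $u_\infty^{(\ell+1)} \in \Omega_c(u)$ and a contact point $y_\infty$ in that closed window. There the barrier $\Phi^c(\cdot;-\overrightarrow{X}_-)$ reduces to the single moving profile $(t,x)\mapsto\phi_{\ell+1}(x-ct+X_{\ell+1}^-)$ (lower waves sitting at $p_\ell$, upper waves at their right limits), so Proposition~\ref{prop:strong} applies and forces $u_\infty^{(\ell+1)}(t,x) = \phi_{\ell+1}(x-ct+X_{\ell+1}^-)$ throughout that window — unless $y_\infty$ is its left endpoint, a case ruled out exactly as in Case~3 of Proposition~\ref{prop:stab_12} (Hopf lemma against spatial monotonicity, or Lemma~\ref{lem:stab_new2} against the definition of $X_{\ell+1}^-$).

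It then remains to propagate this coincidence downward — across the plateau at height $p_\ell$ and across the already-resolved waves $\phi_\ell,\dots,\phi_1$ — so as to get $u_\infty^{(\ell+1)} = \Phi^c(\cdot;-\overrightarrow{X}_-)$ on the whole half-line $\{x \ge ct - X_{\ell+1}^-\}$. On $\{x \ge ct - X_{\ell+1}^- + \eta_{\ell+1}\}$ one has $u_\infty^{(\ell+1)} \ge \Phi_\ell(\cdot;(-X_1^-,\dots,-X_\ell^-))$ from the recursive definition of $\overrightarrow{X}_-$, and $u_\infty^{(\ell+1)} \le p_\ell$ by monotonicity and $u_\infty^{(\ell+1)}(t,ct-X_{\ell+1}^-+\eta_{\ell+1})=p_\ell$; this already forces equality on the plateau where $\Phi_\ell \equiv p_\ell$, which transfers a contact with $\phi_\ell$ to the left endpoint of its window, after which one more round of the Case~3 dichotomy and Proposition~\ref{prop:strong} fixes $u_\infty^{(\ell+1)}=(t,x)\mapsto\phi_\ell(x-ct+X_\ell^-)$ there, and a finite recursion alternating the plateau-squeeze with Proposition~\ref{prop:strong} covers the whole half-line. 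Evaluating finally $u_\infty^{(\ell+1)}(0,-X_{\ell+1}^-)=p_{\ell+1}$ against $\Phi_{\ell+2}(\cdot;(-X_1^-,\dots,-X_{\ell+2}^-)) \le u_\infty^{(\ell+1)}$ forces $\phi_{\ell+2}(X_{\ell+2}^- - X_{\ell+1}^-) = p_{\ell+1}$, hence $-X_{\ell+2}^- - \eta_{\ell+2} \le -X_{\ell+1}^-$ and $\Phi_{\ell+2}(\cdot;(-X_1^-,\dots,-X_{\ell+2}^-))$ is a terrace solution — exactly as~\eqref{indeedterrace?} was obtained — which closes the induction. The main obstacle I anticipate is this downward propagation: one must check that the plateau-squeeze really hands the contact to the left endpoint of the next window, that the Hopf-lemma/Lemma~\ref{lem:stab_new2} dichotomy can be re-run for each lower wave without circularity, and that the discontinuity of $f$ at the plateau heights $p_\ell,\dots,p_1$ does not obstruct the strong maximum principle — here it is essential that every such contact is approached from strictly below a stable state, where $f$ is one-sidedly Lipschitz. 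The rest is a careful rerun of the one-wave arguments of Section~\ref{sec:single} and of Lemma~\ref{lowerfront}.
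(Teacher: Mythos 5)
Your overall architecture is the paper's: induction over the waves, a rerun of Lemma~\ref{lowerfront} for $\phi_{\ell+1}$ to locate a contact point $y_\infty$ in the closed window $[-X_{\ell+1}^-,\eta_{\ell+1}-X_{\ell+1}^-]$, then Proposition~\ref{prop:strong} plus the Hopf-lemma dichotomy to force coincidence with $\phi_{\ell+1}$ on that window, and finally the verification that $\Phi_{\ell+2}$ is a terrace solution exactly as in~\eqref{indeedterrace?}. Where you diverge is in the step you yourself flag as the main obstacle, the ``downward propagation,'' and as written that step does not go through. Your plateau-squeeze hands you a contact of $u_\infty^{(\ell+1)}$ with $\phi_\ell$ at the \emph{left} endpoint of the window of $\phi_\ell$ (value $p_\ell$), but Proposition~\ref{prop:strong} deliberately excludes that configuration: as the remark following it explains, any right-shift $\phi_\ell(\cdot - X)$ with $X>0$ lies above $\phi_\ell$, touches it exactly at the left endpoint, and does not coincide with it on the window, so no strong maximum principle can be extracted from a left-endpoint contact alone. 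The Case~3 dichotomy of Proposition~\ref{prop:stab_12} is also not directly available for the intermediate wave $\phi_\ell$ without reworking it against the recursive definition of $X_\ell^-$ and Lemma~\ref{lem:stab_new2} (which is stated for full terrace shifts), so at best this branch needs a substantial additional argument.

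The paper avoids the issue entirely by a choice you did not make: it takes $\widehat{u}_\infty$ to be the \emph{specific} element of $\Omega_c(u)$ produced by the previous induction step, i.e.\ the one already satisfying $\widehat{u}_\infty(t,x)=\Phi_\ell\bigl(t,x;(-X_1^-,\dots,-X_\ell^-)\bigr)$ for $x> ct-X_\ell^-$, and uses Lemma~\ref{lem:stab_new2} to show that the $\varepsilon$-raised barrier must fail \emph{along this particular} $\widehat{u}_\infty$ (if it held there, it would hold for every element of $\Omega_c(u)$, contradicting the definition of $X_{\ell+1}^-$). The new limit $u_\infty$ is then obtained as a locally uniform limit of $\widehat{u}_\infty(\cdot+s_n,\cdot+cs_n)$, and the identity with $\Phi_\ell$ on the right half-line is invariant under translations along the direction $(1,c)$ (all waves share the speed $c$), so $u_\infty$ inherits it for free. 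Combined with the freshly established identity with $\phi_{\ell+1}$ on its window and spatial monotonicity on the intervening plateau, this yields the coincidence on all of $\{x\ge ct-X_{\ell+1}^-\}$ with no further maximum-principle work. You should replace your arbitrary $\widehat{u}_\infty$ by this specific one; the rest of your plan then closes as you describe.
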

\begin{proof}
The argument proceeds by induction. For the sake of brevity we only sketch how to reiterate the proof of Lemma~\ref{lowerfront}. That is we will show that there exists a function $u_\infty \in \Omega_c (u)$ such that
\begin{equation}\label{reallyfinal}
u_\infty (t,x) = \Phi_2 (t,x ; (-X_1^-, - X_2^-)),
\end{equation}
for all $t \in \mathbb{R}$ and $x \in (ct - X_2^-, \infty).$

First we pick $\widehat{u}_\infty \in \Omega_c (u)$ as in Lemma~\ref{lowerfront}, i.e. such that $\widehat{u}_\infty (t,x) = \phi_1 (x -ct + X_1^-)$ for all $t \in \mathbb{R}$ and $x > ct - X_1^-$. Now we claim that, for any $\varepsilon >0$,
\begin{equation}\label{eq:almostfinal}
\inf_{(t,x) \in \mathbb{R}^2} \widehat{u}_\infty (t,x) - \Phi_2 (t,x; (-X_1^- , -X_2^- + \varepsilon))< 0.
\end{equation}
Indeed, assume otherwise that $\widehat{u}_\infty \geq \Phi_2 (\cdot, \cdot ; (-X_1^- , - X_2^- + \varepsilon)$ for some $\varepsilon >0$. Then, due to Lemma~\ref{lowerfront}, we have that
$$p_1 = \phi_1 (0)=\widehat{u}_\infty (0, -X_1^-) \geq \Phi_2 (0,0 ; (-X_1^- , -X_2^- + \varepsilon)) =  \phi_2 (X_2^- - X_1^- - \varepsilon) . $$
Since $\phi_2 \geq p_1$, we get that
$$\phi_2 (X_2^- - X_1^- - \varepsilon) = p_1.$$ 
This in turn implies that $\Phi_2 (\cdot, \cdot ; (-X_1^- , -X_2^- + \varepsilon))$ is a terrace solution of \eqref{eq:rd}. Then, as in the proof of Lemma~\ref{lowerfront}, the fact that $\widehat{u}_\infty \geq \Phi_2 (\cdot, \cdot; (-X_1^-, -X_2^- + \varepsilon))$ and an application of Lemma~\ref{lem:stab_new2} lead to a contradiction with the definition of $-X_2^-$. Thus the claim~\eqref{eq:almostfinal} holds true.

Next, due to Lemma~\ref{lowerfront}, we already know that $\widehat{u}_\infty$ coincides with $\Phi_1 (\cdot, \cdot ; -X_1^-)$ on a half space, thus we infer from \eqref{eq:almostfinal} that
$$\inf_{t \in \mathbb{R}}  \inf_{x \leq ct - X_1^-} \widehat{u}_\infty (t,x) - \Phi_2 (t,x; (-X_1^-, -X_2^- + \varepsilon)) < 0.$$
Recall that $(0,\eta_2)$ denotes the support of $\phi_2'$. Then we proceed exactly as before and find some sequences $s_n$ and $y_n$ such that $y_n - c s_n \to y_\infty \in [-X_2^-, \eta_2 - X_2^-]$, as well as some $u_\infty \in \Omega_c (u)$ such that
$$\widehat{u}_\infty (\cdot + s_n, \cdot + c s_n) \to u_\infty,$$
where the convergence is understood in the locally uniform sense as $n \to +\infty$, and lastly
\begin{equation}\label{maybefinal}
u_\infty (0, y_\infty) = \phi_2 ( y_\infty + X_2^-).
\end{equation}
As in the proof of Lemma~\ref{lowerfront}, it follows from Proposition~\ref{prop:strong} and a Hopf lemma that
$$u_\infty (t,x) = \phi_2 (x - c t + X_2^-),$$
for all $t \in \mathbb{R}$ and $x \in [ct - X_2^- , ct - X_2^- + \eta_2]$. On the other hand, we get from~\eqref{maybefinal} and the fact that $\widehat{u}_\infty$ satisfies~\eqref{onemoretime} that
$$u_\infty (t,x) = \phi_1 (x - ct + X_1^-),$$
for all $t \in \mathbb{R}$ and $x > ct - X_1^-$. Putting together these two facts, and recalling also~\eqref{indeedterrace?} and that any function in $\Omega_c (u)$ is nondecreasing in space, we conclude that 
$$u_\infty (t,x) = \Phi_2 (t,x ; (-X_1^- , - X_2^-))$$
for all $t \in \mathbb{R}$ and $x \geq ct - X_2^- $. In other words, we have found $u_\infty \in \Omega_c (u)$ such that~\eqref{reallyfinal} holds true. Reiterating this argument, one may end the proof of Proposition~\ref{prop:last}.
\end{proof}
As mentioned before, a symmetrical argument shows that that the terrace solution $\Phi^c (t,x; \overrightarrow{X}_+)$ also belongs to $\Omega_c (u)$. Finally, by Lemma~\ref{lem:stab_new2}, we conclude that $-\overrightarrow{X}_- = \overrightarrow{X}_+$, so that actually $\Omega_c (u)$ reduces to a single terrace solution. Theorems~\ref{asymptotics:global} and~\ref{theo:stab_terrace} are now proved.

\section*{}

\noindent{\bf Acknowledgements}

\noindent{This work was carried out in the framework of the CNRS International Research Network ``ReaDiNet''. The two authors were also supported by the joint PHC Star project MAP, funded by the French Ministry for Europe and Foreign Affairs and the National Research Fundation of Korea. The first author also acknowledges support from ANR via the project Indyana under grant agreement ANR- 21- CE40-0008.}

 \end{document}